\documentclass[11pt,reqno,
]{amsart}

\usepackage{
amssymb, graphicx, color, cite 
}

\usepackage{changes}
\definechangesauthor[name=Nikolas, color=blue]{N}
\definechangesauthor[name=Plamen, color=purple]{P}

\let\oldtocsection=\tocsection

\let\oldtocsubsection=\tocsubsection

\let\oldtocsubsubsection=\tocsubsubsection

\renewcommand{\tocsection}[2]{\hspace{0em}\oldtocsection{#1}{#2}}
\renewcommand{\tocsubsection}[2]{\hspace{1em}\oldtocsubsection{#1}{#2}}
\renewcommand{\tocsubsubsection}[2]{\hspace{2em}\oldtocsubsubsection{#1}{#2}}

\usepackage[shortlabels]{enumitem}
\usepackage{mathtools}
\mathtoolsset{showonlyrefs}
\usepackage[linkcolor=blue, citecolor=black, breaklinks=true, colorlinks=true, linkcolor=blue, citecolor=blue, urlcolor=blue]{hyperref}

\renewcommand{\L}{\mathcal{L}}

\date{\today}

\newtheorem{theorem}{Theorem}[section]

\newtheorem{lemma}{Lemma}[section]
\newtheorem{definition}{Definition}[section]
\newtheorem{corollary}{Corollary}[section]

\theoremstyle{definition}
\newtheorem{remark}{Remark}[section]

\DeclareMathOperator{\sgn}{sgn}
\newcommand{\X}{\mathsf{X}}

\DeclareMathOperator{\WF}{WF}

\newcommand{\R}{{\mathbb R}}

\newcommand{\Id}{\text{\rm Id}}
\renewcommand{\r}[1]{\eqref{#1}}
\newcommand{\PDO}{$\Psi$DO}
\newcommand{\be}[1]{\be\label{#1}}
\newcommand{\ee}{\end{equation}}
\renewcommand{\d}{\mathrm{d}}

\renewcommand{\i}{\mathrm{i}}

\newcommand{\bo}{\partial M}
\newcommand{\p}{\partial}

\newtheorem{example}{Example}[section]
\newcommand{\set}{\setminus 0}

\title[Scattering Rigidity for Hamiltonian systems]{Scattering Rigidity for Hamiltonian systems\\ with an application to Finsler  geometry
}

\author[N. Eptaminitakis] {Nikolas Eptaminitakis}
\address{Institut für Differentialgeometrie, Leibniz Universit\"at Hannover,
	Welfengarten 1, 30167 Hannover, Germany}

\author[P. Stefanov]{Plamen Stefanov}
\address{Department of Mathematics, Purdue University, West Lafayette, IN 47907}
\thanks{P.S. partly supported by  NSF  Grants DMS-2154489 and DMS-2452757. Part of this research was carried out while N.E. was visiting Purdue University, supported by the former grant.}

\begin{document}

\begin{abstract}
We study scattering rigidity for Hamiltonian systems on $T^*M\setminus 0$, where $M$ is a manifold with boundary equipped with a positively homogeneous Hamiltonian function $H(x,\xi)$. We show that $H$ can be uniquely determined by the scattering relation up to a canonical transformation fixing the boundary (in a suitable sense) for positive energy levels $H=E>0$. We define the travel times $T(x,y)$ between boundary points, and show that their linearization leads to an X-ray transform over Hamiltonian curves, which we invert. When $E=0$, scattering rigidity can be formulated in terms of a diffeomorphism of the zero energy surfaces which preserves the boundary and respects the orbits of the Hamiltonian flows there, as well as the restricted symplectic form. The travel times are replaced by a defining function of pairs of boundary points which can be connected by a locally unique zero bicharacteristic. Its linearization leads to the ``Hamiltonian light ray transform'' which we invert modulo a gauge as well. As an application of this phase space approach, we prove semiglobal lens rigidity of non-trapping Finsler manifolds. The group of the gauge transformations consists of certain canonical transformations composed with Legendre transforms.
\end{abstract}
\maketitle
 \tableofcontents

\section{Introduction}

This paper studies lens and boundary rigidity problems for  Hamiltonian systems on $T^*M\set $  related to Hamiltonians $H(x,\xi)$ homogeneous of order two, where $M$ is a compact manifold  with boundary, as well as their applications to Finsler  geometry. We are in part motivated by the study of hyperbolic systems, which, microlocalized under a non-degeneracy assumption, diagonalize to a collection of scalar equations with such Hamiltonians. The main question is to what extent the scattering relation $S$ or the travel times between boundary points determine $H$. Note that this is a formally \textit{underdetermined} problem: the data depends on $2n-2$ variables, accounting for the homogeneity, while $H$ depends on $2n-1$ variables. We give a complete characterization for the problem with local data.  The order of homogeneity can be any positive integer, actually, but we chose order two in  the exposition for two reasons: first, to be able to compare the results to the metric case $H=\frac12 g^{ij}(x)\xi_i\xi_j$, where $g$ is Riemannian or more generally, pseudo-Riemannian; and second, to make the results easier to apply to second order hyperbolic systems, in particular to anisotropic elasticity. 
The scattering relation and the travel times are encoded in the (singularities of) the Cauchy data for the corresponding PDE, see Appendix~\ref{sec_App}. 

The first round of results say, roughly speaking, that two Hamiltonians with the same data at  a fixed positive energy level (and therefore, at any positive one),  are related by the pull-back of a homogeneous canonical transformation $\kappa$ which, in a certain sense made precise in  Theorem~\ref{thm_H} below, is identity on the boundary. The proof is relatively straightforward. Note that this does not solve the Riemannian case  right away, which has been the subject of extensive research, see, e.g., \cite{SUV_anisotropic,S-AH,SUV_localrigidity, SU-JAMS, SU-lens, PestovU, LassasSU}. The Riemannian rigidity actually requires $\kappa$ to be generated by a diffeomorphism $\psi$ of the base variable, lifted to $TM$ (which we can identify with $T^*M$ when $g$ is fixed). This is a much harder problem. 

Under a non-conjugacy assumption, we define the travel times $T(x,y)$ between boundary points and then we linearize them with respect to $H$.  
The standard derivation in the Riemannian case relies on the first variation of the energy functional. We prove a Hamiltonian counterpart in Lemma~\ref{lemma_V} and, perhaps non-intuitively, show that a Hamiltonian curve connecting a fixed pair of points is not a critical point of the Hamiltonian version  of the energy functional. Despite that, the linearization of $T$ turns out to be an X-ray transform  on $T^*M$ of functions $f(x,\xi)$ along the Hamiltonian curves of $H$, which we denote by $\X f$.
The inversion of $\X$ up to a gauge is obvious: it is the derivative along the flow of a function $\phi(x,\xi)$ vanishing for $x\in\bo$.  Back to the Riemannian case, in linearized boundary rigidity, $f(x,\xi)$ is induced by a symmetric order two tensor and locally takes the form $f=f^{ij}(x)\xi_i\xi_j$; one would like to show that $\phi(x,\xi)$ is a one-form (linear in $\xi$) on $M$. Again, this is a harder problem with a lot of progress recently, see, e.g., \cite{ SUV-tensors, UV:local,MonardSU14,SU-caustics,SU-Duke, Gabriel_G_M_book, Sh-book}.

Our second set of results concerns Hamiltonian dynamics at zero energy level, motivated by propagation of singularities for pseudo-differential operators  (\PDO s) of real principal type. An important example is $H=\frac12 g^{ij}(x)\xi_i\xi_j$ with $g$ pseudo-Riemannian \cite{Ilmavirta-pseudo, S-Agrawal-LR} (in particular Lorentz \cite{LOSU-Light_Ray,S-support2014,S-Lorentzian-scattering,S-Lorentz_analytic,Yiran-Light-2021,Vasy-Wang-21}), but not Riemannian. An important distinction now is that $H^{-1}(\{0\})$ is conic, codimension one, and there is no proper way to define travel times. We show that one can recover  the null cone, up to a diffeomorphism preserving the orbits of the Hamiltonian flow and the restricted symplectic forms. 
The scattering relation is homogeneous, and it is the canonical relation of the corresponding DN (or DD) map, see section~\ref{sec_A2} again. 
We characterize the kernel of the associated ``light ray transform'' $L$ as well. This answers questions posed in \cite{OSSU-principal} both for the scattering relation and for $L$. We also show that instead of travel times, one can take any defining function of pairs of boundary points which can be connected by a null bicharacteristic, and its linearization leads to this light ray transform. Linearizing such a defining function is more convenient than linearizing the scattering relation, see also \cite{S-Lorentzian-scattering}. 

We present an application where this phase space analysis can be used to solve a specific rigidity problem. We consider Finsler manifolds and prove rigidity results using the Legendre transform, which allows us to reduce Finsler metrics to Hamiltonians. Going back, from Hamiltonian to Finsler, requires $H_{\xi\xi}>0$, which we do not assume in our Hamiltonian analysis, but which we need here. In applications to anisotropic elasticity, for example, this condition holds for the Hamiltonian corresponding to waves traveling at fastest speed, but not in general. 
We derive our main results using the Hamiltonian formalism only, and a bit of symplectic geometry. Works treating rigidity problems for Finsler manifolds that we are aware of are \cite{HoopIL2021, HoopIL2023}, where the formulation is different: one assumes either internal sources or internal reflections. 
The group preserving the data in \cite{HoopIL2021, HoopIL2023}  consists of isometries on the base fixing $\bo$ pointwise. In contrast, here we have a much larger group of canonical transformations in the phase space that leave the data unchanged, and as we discuss in Section \ref{sec_Finsler}, there exist such transformations which do not arise from diffeomorphisms of the base manifold. 
This also shows that the travel time problem in elasticity cannot be solved up to base diffeomorphisms fixing the boundary by viewing it as a problem in Finsler geometry.  
A boundary rigidity problem for a certain subclass of Finsler metrics was also studied in \cite{MonkonenRandersMetrics}, where it was also remarked that there are non-isometric Finsler metrics with the same boundary distance function.

We formulate and prove most of our results in a semi-global way, with local data and local recovery. With appropriate assumptions, one can formulate them in a global way as well. 
This would require more careful analysis of the smoothness of the corresponding canonical transformations and X-ray transforms  near glancing rays.   

This paper is organized as follows. We study Hamiltonian systems at positive energy levels in section~\ref{sec_2}. We introduce the scattering relation and the travel times there, the latter parameterized by initial points and directions. We prove a semi-global rigidity result in Theorem~\ref{thm_H}. In section~\ref{sec_3}, we parameterize the travel times $T(x,y)$ by endpoints under a non-conjugacy assumption, and linearize them with respect to the Hamiltonian. The result is the Hamiltonian X-ray transform, whose kernel we describe. Although we do not need the linearization to solve the nonlinear problem, which we did in section~\ref{sec_2}, we do this to connect it to the metric case, and hope this to be useful for the study of Hamiltonians of special classes. We also show in this section that the travel times $T(x,y)$ at the boundary recover the scattering relation uniquely, serving in fact as a generating function.  We consider zero energy levels in section~\ref{sec_zero},  solving the nonlinear rigidity problem and the linearized ``light ray'' X-ray problem.  The Finsler rigidity result is proven in section~\ref{sec_Finsler}. 

\smallskip

\noindent\textit{Acknowledgments.} N.E. would like to thank A.P. Contini for helpful discussions.

\section{Positive energy levels. The scattering relation and the travel times} \label{sec_2}

\subsection{Preliminaries} 
Let $M$ be a smooth manifold of dimension $n\ge2$, with or without boundary. Let $H(x,\xi)\in C^\infty(T^*M\set)$ be a positive and  homogeneous\footnote{Throughout the paper, ``homogeneous'' means positively homogeneous, always with respect to the fiber variable. } of degree two Hamiltonian, i.e.,  $H\circ \mathcal{M}_\lambda=\lambda^{2}H$ for $\lambda>0$, where 
\begin{equation}
\label{eq:dilation}
	\mathcal{M}_\lambda:T^*M\set\to T^*M\set,\quad (x,\xi)\mapsto (x,\lambda \xi)
\end{equation}
is the dilation operator. 
We can think of the energy level $H=1/2$ as the reference level, which defines the travel times of interest. An example is $H=\frac12 g^{ij}(x) \xi_i \xi_j$ with $g=\{g^{ij}\}$ being  the co-metric corresponding to a Riemannian metric, written in local coordinates. We call this the ``metric case.'' 
We fix the symplectic two form on $T^*M$ to be $\sigma= \d\xi\wedge \d x$ with corresponding matrix $J=(0,-\Id;\Id,0)=-J^{\mathsf{T}}$ in block form, and let $X_H=-J(\d H)^{\mathsf{T}}=H_\xi\partial_x-H_x\partial_\xi$ be the related Hamiltonian generator of the Hamiltonian flow  $\Phi^t$. %
(Here and throughout, total differentials are  row vectors.) 
 We assume that $H$ is non-degenerate at energy level $1/2$ (and hence, at any positive one), i.e., $\d H\not=0$ on $H=1/2$. We denote by $\alpha = \xi\d x$ the tautological 1-form on $T^*M\set$, for which $\sigma=\d \alpha$.

Euler's identity $\xi\cdot H_\xi=2H$ indicates that at positive energy levels, $X_H$ is never radial (proportional to $\xi\partial_\xi$,  and in fact, $\xi\partial_\xi$ is always transversal to the positive energy level hypersurfaces. 
We say that the Hamiltonian curve $(x(t),\xi(t))$ (or its $x$-projection) is issued from $(x_0,\xi^0)$ if the latter is an initial condition at $t=0$. The actual direction at $t=0$ is $v \coloneqq  \dot x = H_\xi$, of course. Note that the map $\xi\mapsto v$ is not necessarily a local diffeomorphism; for that we would need $\det H_{\xi\xi}\not=0$. 
Define 
\begin{equation}   \label{norm}
|\xi|_x= (2H(x,\xi))^\frac1{2}, \quad S^*M = \{(x,\xi)\in T^*M\set\;|\; |\xi|_x=1\}. 
\end{equation} 
Due to the non-degeneracy assumption $\d H\neq 0$ whenever $H=1/2$, $S^{*}M$ is a smooth submanifold of $T^*M$. Moreover, since $H_\xi\neq 0$ whenever $H=1/2$ by Euler's identity, each fiber $S_x^*M$ for fixed $x$ is a submanifold of $T_x^{*}M$.
Note that $|\xi|_x$ is not necessarily a norm, and it may not even satisfy $|-\xi|_x= |\xi|_x$.  On the other hand, $|\lambda \xi|_x =\lambda |\xi|_x $ for every $\lambda>0$, and $|\xi_x|>0$ when $\xi\not=0$. In the metric case, $|\xi|_x$ is the length of the covector $\xi$.  We call  bicharacteristics at energy level $1/2$ \textit{unit}; then $|\xi|_x=1$.  
They are related with bicharacteristics on other positive energy levels via the following lemma.

\begin{lemma}  \label{lemma_hom}
If $\gamma(t) = (x(t),\xi(t))$ is a Hamiltonian curve at energy level $E$, then for every $\lambda \not=0$, $\tilde\gamma(t)=\mathcal{M}_\lambda \circ \gamma(\lambda t)=(x(\lambda t), \lambda\xi(\lambda t))$ is a Hamiltonian curve at energy level $\lambda^2E$.
In terms of the flow  map, $\Phi^t\circ \mathcal{M}_\lambda= \mathcal{M}_\lambda\circ \Phi^{\lambda t}$.
\end{lemma}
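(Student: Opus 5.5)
The plan is a direct computation using the homogeneity of the derivatives of $H$, followed by uniqueness of solutions of ODEs to obtain the flow identity. First I would record the consequences of $H\circ\mathcal{M}_\lambda=\lambda^2H$ for $\lambda>0$. Differentiating this identity in $x$ gives $H_x(x,\lambda\xi)=\lambda^2H_x(x,\xi)$. Differentiating it in $\xi$ (chain rule) gives $\lambda H_\xi(x,\lambda\xi)=\lambda^2 H_\xi(x,\xi)$, that is, $H_\xi(x,\lambda\xi)=\lambda H_\xi(x,\xi)$. So $H_x$ is homogeneous of degree two and $H_\xi$ of degree one in the fiber variable.

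Next, let $\gamma(t)=(x(t),\xi(t))$ solve $\dot x=H_\xi$, $\dot\xi=-H_x$ with $H(\gamma)=E$, and set $\tilde\gamma(t)=(x(\lambda t),\lambda\xi(\lambda t))$. By the chain rule,
\[
\frac{\d}{\d t}x(\lambda t)=\lambda H_\xi\bigl(x(\lambda t),\xi(\lambda t)\bigr)=H_\xi\bigl(x(\lambda t),\lambda\xi(\lambda t)\bigr),
\]
\[
\frac{\d}{\d t}\bigl(\lambda\xi(\lambda t)\bigr)=-\lambda^2H_x\bigl(x(\lambda t),\xi(\lambda t)\bigr)=-H_x\bigl(x(\lambda t),\lambda\xi(\lambda t)\bigr).
\]
Hence $\tilde\gamma$ is an integral curve of $X_H$. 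Its energy is $H(\tilde\gamma(t))=\lambda^2H(\gamma(\lambda t))=\lambda^2E$.

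For the flow statement, fix $z\in T^*M\set$. The curve $t\mapsto\mathcal{M}_\lambda\Phi^{\lambda t}(z)$ is, by the step above, a Hamiltonian curve. At $t=0$ it equals $\mathcal{M}_\lambda z$. The curve $t\mapsto \Phi^t(\mathcal{M}_\lambda z)$ solves the same Hamiltonian system with the same initial point. Uniqueness of solutions of the smooth ODE therefore gives $\Phi^t\circ\mathcal{M}_\lambda=\mathcal{M}_\lambda\circ\Phi^{\lambda t}$ on the common interval of existence. The same argument shows that the maximal existence intervals correspond under $t\mapsto\lambda t$.

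The only delicate point I expect is the sign of $\lambda$. The homogeneity in the paper is positive homogeneity, and $\mathcal{M}_\lambda$ in \eqref{eq:dilation} is defined for $\lambda>0$. The computation above uses exactly $H(x,\lambda\xi)=\lambda^2H(x,\xi)$ together with its derivatives. For $\lambda<0$ this identity holds only if $H$ is even in $\xi$, as in the metric case, and then the same computation applies verbatim. So I would prove the lemma for $\lambda>0$ and state that the case $\lambda<0$ holds under the additional evenness assumption $H(x,-\xi)=H(x,\xi)$.
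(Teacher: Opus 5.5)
Your proof is correct and is essentially the paper's argument. The paper does the same chain-rule computation in matrix form, using the identity $\d\mathcal{M}_\lambda J=\lambda J(\d\mathcal{M}_{\lambda^{-1}})^{\mathsf{T}}$ together with the homogeneity of $\d H$, and it leaves the ODE-uniqueness step for the flow identity implicit. Your caveat about $\lambda<0$ is also right: the paper's appeal to (positive) homogeneity likewise only covers $\lambda>0$, and the paper later invokes the case $\lambda=-1$ only when $H$ is even in $\xi$.
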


\begin{proof}
It is straightforward to check that $  \d \mathcal{M}_{\lambda}J=\lambda J(\d \mathcal{M}_{\lambda^{-1}} )^{\mathsf{T}} $. By homogeneity of $H$,
\begin{align*}
	\frac \d {\d t}\tilde \gamma &=\lambda \d \mathcal{M}_\lambda X_H\big|_{\tilde \gamma(t)}=-\lambda \d \mathcal{M}_\lambda J(\d H)^{\mathsf{T}}\big|_{\tilde \gamma(t)}=-\lambda^{2} J(\d \mathcal{M}_{\lambda^{-1}} )^{\mathsf{T}}(\d H)^{\mathsf{T}}\big|_{\tilde \gamma(t)}\\
	&=- J(\d H)^{\mathsf{T}}\big|_{\tilde \gamma(t)}
	=X_H\big|_{\tilde \gamma(t)},
\end{align*}
which shows the claim.
\end{proof}

\noindent For example, when $H=|\xi|^2/2$ on $T^*\R^n$, the Hamiltonian curves are of the form $\gamma(t)=(x_0+t\xi^0,  \xi^0)$. Rescaling as above, we get $\tilde \gamma(t)=(x_0+(\lambda t) \xi^0,\lambda \xi^0) = (x_0+t(\lambda \xi^0),\lambda\xi^0)$ which is a Hamiltonian curve with the initial condition $(x_0,\lambda\xi^0)$, at  energy level $\lambda^2|\xi^0|^2/2$.

\subsection{The scattering relation and the travel times}\label{sub:the_scattering_relation_and_the_travel_times}
Throughout  and unless otherwise stated, let $M$ be a manifold with boundary.
We say that $(x,\xi)\in T^*M\set\big|_{\partial M}$ is incoming (resp. outgoing) if  the initial velocity vector  $H_\xi(x,\xi) = \d \pi X_H$ of the corresponding bicharacteristic (where $\pi:T^{*}M\to M$ is the projection) is inward (resp. outward) pointing.
Fix two disjoint open subsets $U$ and $V$ of $\partial M$ and let $(x_0,\xi^0)\in T^*M\set$ with $x_0\in U$ be an initial covector giving rise to a unit bicharacteristic $\gamma_0$, and assume that $\gamma_0(s)$ hits $V$ (more precisely, $T^*M\set\big|_V$) transversally for time $s=s(x_0,\xi^0)\not=0$ (positive or negative) at some $(y_0,\eta^0)$ for the first time, see also Figure~\ref{fig2a}. 
\begin{figure}[h!] %
  \centering
  \includegraphics[scale=1,page=6]{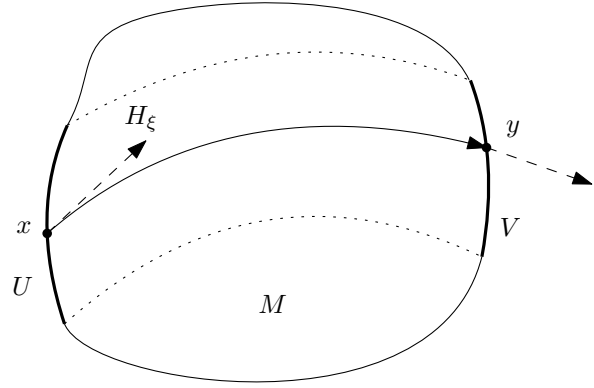}
\caption{%
The scattering relation $S$. The covectors $\xi$, $\xi'$, $\eta$ and $\eta'$ are not plotted. The domain between the dotted lines represents $\Gamma$, see \eqref{eq:Gamma} (projected to the base). 
}
\label{fig2a}
\end{figure}
The transversality is preserved for each bicharacteristic $\gamma_{x,\xi}$ issued from  $(x,\xi)$ in a small conic neighborhood of $(x_0,\xi^0)$, with $x\in U$, and an endpoint $(y,\eta)$ at $s(x,\xi)$ with $y\in V$. As a first step, we can define the local scattering relation
\begin{equation}   \label{a2}
	\hat S : (x,\xi)\mapsto (y,\eta).
\end{equation}
We also set
\begin{equation}   \label{a3}
	\text{travel time} = \hat\ell(x,\xi ).
\end{equation}
If $(x,\xi)$ is incoming (resp. outgoing),  so that  $(y,\eta)$ is outgoing (resp. incoming), then $\hat\ell(x,\xi)>0$ (resp. $\hat\ell(x,\xi)<0$).
Note that since $H$ is not necessarily symmetric with respect to the antipodal map in the fibers, its flow needs not be reversible. Therefore, if we know that $\hat{S}(x,\xi)=(y,\eta)\in T^*M\big|_V$ for some positive time $s(x,\xi)$, there is no reason why a bicharacteristic  starting at $(x,-\xi)$ would intersect $T^*M\big|_V$ for any time, positive or negative.

In view of the intended applications to PDEs, the scattering relation and the travel times are extracted from wave front sets on $T^*\bo$. For this reason, we parameterize those quantities with such covectors. 
The pullback by the canonical inclusion $\iota:\partial M\hookrightarrow M$ induces a  map $\iota^{*}: T^*M\to T^*\partial M$, which corresponds to restricting an element of $T^*M\big|_{\partial M}$ to $T\partial M$.
We pull $(x,\xi)\in T^*M\big|_U$ and $(y,\eta)\in T^*M\big|_V$ back to $U$ and $V$, respectively, denoting the pullbacks with primes, that is,
\begin{equation}\label{eq:pullbacks}
 	(x,\xi')=\iota^{*}(x,\xi),\quad (y,\eta')=\iota^{*}(y,\eta).
 \end{equation} 
In terms of boundary coordinates $(x',x^n)$ for the base  with $x^n=0$ on $U$ and corresponding linear coordinates $(\xi,\xi_n)$ for $T^*M$, $\iota^{*}(x,\xi)=(x',\xi')$. 

Given $\xi'$ at $x\in U$, it is not true for a general Hamiltonian that there is a unique unit $\xi$, say incoming or outgoing, with that pullback, smoothly depending on $(x,\xi')$. The ``slowness surface'' $|\xi|_x=1$ is not necessarily strictly convex or even convex, see Figure~\ref{fig1}. To guarantee existence of unique $\xi$ with a given $(x,\xi')$ \textit{locally}, we impose on $H_\xi=\d \pi X_H$ the following condition:
\begin{equation}   \label{A1}
\langle \d \rho,H_\xi\rangle\not=0 %
\end{equation}
at $(x_0,\xi^0)$, where $\rho$ is a boundary defining function\footnote{A \textit{boundary defining function} $\rho$ on a manifold with boundary $M$ by definition satisfies $\rho>0$ on $M^{\circ}$, $\rho=0$ on $\partial M$, and $\d \rho\neq 0$ on $\partial M$.}.
 Passing to boundary local coordinates in which $U=\{x^n=0\}$, condition~\r{A1} takes the form $H_{\xi_n} (x_0,\xi^0) \not=0$. By the Hamiltonian system, this can be written as $\dot x^n\not=0$. Then the bicharacteristic $\gamma_{x_0,\xi^0}(s)$ is incoming/outgoing if and only if  $\pm \langle \d \rho,H_\xi\rangle>0$ but never tangent to $U$. In Figure~\ref{fig1}, for example,  $\langle \d x^{n} ,H_\xi\rangle<0$, thus $\xi$ is outgoing. 
Under condition~\r{A1}, knowing  $\xi'=(\xi_1,\dots,\xi_{n-1})$ at some $x\in U$,  uniquely recovers $\xi_n$ locally given   $|\xi|_x= (2H(x,\xi))^{1/2}=1$ by the Implicit Function Theorem; and then it recovers $\xi$. Denote this solution by 
\begin{equation}   \label{zeta}
\xi = \zeta(x',\xi') = (\xi',\zeta_n(x',\xi')). %
\end{equation}
This defines a local inverse $\zeta=\zeta_{(x_{0},\xi^{0})}$ of the pullbacks in \eqref{eq:pullbacks}.
In the metric case, $\xi_n=(1-|\xi'|_g^{2})^{1/2}$, written in boundary normal coordinates. 
We also note that in the Hamiltonian case, an inequality of the form $|\xi'|<1$  does not imply that $|\xi|=1$ is solvable for a given $\xi'$. In fact, we work locally near some $(x_0,\xi^0)$ and we do not even assume that $H(x_0,\xi^0{}')$ is defined. 

\begin{figure}[h!] %
  \centering
  \includegraphics[scale=1,page=1]{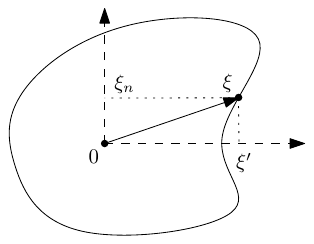}
\caption{%
	The slowness surface $|\xi|_x=1$ (in 2D) in dual coordinates associated with boundary coordinates $(x',x^n)$ at $U$, where locally $x^n$ is a boundary defining function. Condition \r{A1} means that the vertical line dropped from the vertex of $\xi$ is transversal to the surface there. It does not exclude existence of other $\xi$'s pointing into $M$ with the same restriction $\xi'$. For the $\xi $ pictured above, $\d x_n H_\xi =H_{\xi_{n}}<0$, indicating that $\xi$ is outgoing.
}

\label{fig1}
\end{figure}

We assumed above that the unit bicharacteristic $\gamma_0(s)$ issued from $(x_0,\xi^0)$, with \r{A1} satisfied, hits $T^*M\set\big|_V$ at $s=s_0\not=0$, transversely. The latter means that $x(s_0)\in V$, and $\dot x(s_0)$ is not tangent to $V$ (and in particular, does not vanish there), where, temporarily, $x(s)$ is the space component of $\gamma_0$.  Set $(y_0,\eta^0)=\gamma(s_0)$. Then we have \r{A1} at $(y_0,\eta^0)$ as well. Clearly, the transversality condition \r{A1} at $x_0$ and $y_0$ is stable under small perturbations.  

\begin{definition} \label{def_Ls}
Let $(x_0,\xi^0)\in S^*M$ with $x_0\in U$, satisfy  \r{A1}. For $(x,\xi')\in T^*U$, close enough to $(x_0,\xi^0{}')$, let $(x,\xi)$ be the locally unique solution to $\iota^{*}(x,\xi)=(x,\xi')$ and $|\xi|_x=1$ in a neighborhood of $(x_{0},\xi^{0})$, and let $\gamma_{x,\xi}(s)$ be the unit bicharacteristic issued from $(x,\xi)$. Assume that $\gamma_{x,\xi}(s)$ hits $V$ transversely for $s=\ell(x,\xi')\not=0$ at $y\in V$, with a restriction $\eta'$ of its fiber there to $TV$. Then we call
\begin{equation}\label{eq:scattering_relation}
  S:(x,\xi') \to (y,\eta'),  
\end{equation}
the \textbf{scattering relation}, and $\ell(x,\xi')$ the associated \textbf{travel time} (positive or negative).
\end{definition}
\noindent Note that with our notation,
\begin{equation}
	S=\iota^{* }\circ \hat S \circ \zeta.
\end{equation}
Whenever we need to emphasize the dependence of $S$ or $\hat S$ on $H$, we will add it as a superscript.  
The pair $(S,\ell)$ is called \textit{lens data}.  We follow the accepted terminology, see \cite{Croke_scatteringrigidity}, for example. Perhaps the terms ``scattering'' and ``lens'' should be swapped since time of propagation plays no role in geometric optics for optical lenses, and ``scattering'' can be applied to spacetime.

To make the localization more specific, let $\mathcal{U}\subset T^*U$ be a neighborhood of $(x_0,\xi^0{}')$ for which the local solvability assumption  with respect to $(x_0,\xi^0)$  and the transversality assumption at $V$ in Definition~\ref{def_Ls} hold. We denote the set of the restrictions to $TV$  of the corresponding endpoints of the bicharacteristics over $V$ by $\mathcal{V}\subset T^*V$.  The set $\mathcal{V}$ is open as well since $S:\mathcal{U}\to \mathcal{V}$ is a diffeomorphism (symplectic, in fact), having as inverse the scattering relation on $\mathcal{V}$ relative to $\hat{S}(x_{0},\xi^{0})$. %

So far,  we related $S$ and $\ell$ to an energy level $1/2$. We can extend $S$ to a $\lambda$-dependent family $S_\lambda(x,\xi')$ with $\lambda >0$, requiring it to be homogeneous with respect to $\xi'$ of order one, by setting $S_\lambda(x,\xi') = \mathcal M_\lambda (S(x,\xi'/\lambda))$.
We also set $\ell_\lambda(x,\xi') = \lambda^{-1}\ell(x,\xi'/\lambda )$.  Then $S_\lambda$ and $\ell_\lambda$ correspond to Hamiltonian curves at energy level $\lambda^2/2$, and are uniquely determined by $S$, $\ell$. Next, we set  
\begin{equation}   \label{U_l}
\mathcal{U}_\lambda = \{(x,\xi')\in T^*U|\; (x,\xi'/\lambda)\in \mathcal{U} \}.
\end{equation}
We have a  map $\zeta_{\lambda}:\mathcal{U_\lambda}\to T^*M\set\big|_U$, which extends \eqref{zeta} by homogeneity (of order 1).

In the metric case, 
$\mathcal{U}$ can be taken to be the open unit ball  bundle in $T^*U$, denoted $B^*U$, and similarly for $\mathcal{V}$.  In boundary normal coordinates, $2H(x,\xi) = %
|\xi'|_g^2 +\xi_n^2$, and $2H=1$ can be solved smoothly for $\xi_n$ when $\xi_n\not=0$, which in this case is equivalent to \r{A1}. The two solutions correspond to incoming/outgoing rays. For $\lambda>0$, we solve $2H=\lambda^2$ to define $S_\lambda$. %

Let $(x_0,\xi^0)\in T^*M\set\big|_U$ be as in Definition \ref{def_Ls}; for simplicity we assume throughout that it is incoming (the outgoing case can be addressed similarly).
Let $\Gamma_{(x_0,\xi^0)}\subset T^*M\set$ be the conic set 
\begin{equation}\label{eq:Gamma}
	\begin{split}
		&\Gamma_{(x_0,\xi^0)} \coloneqq\bigcup _{\lambda>0}  \big\{(y,\lambda\eta)|\; \text{$(y,\eta)$ lies on the unit bicharacteristic issued from}\\
		&\qquad  \text{ some $(x,\xi)$ in a small neighborhood of $(x_0,\xi^0)$ in $S^*M$ with $(x,\xi')\in\mathcal{U}$} \big\},
	\end{split}
\end{equation}
that is, $\xi$ is the locally unique solution of $H(x,\xi)=1/2$ with restriction to $T_xU$ given by $\xi'$. 
Whenever $ (x_0,\xi^0)$ is fixed throughout a certain discussion we will suppress it in the notation and write it as $\Gamma$.
Note that $S_\lambda$ is defined by Hamiltonian curves passing through $\Gamma$ only, and it could only possibly recover $H$ there. 
Throughout, for $(y,\eta)\in \Gamma $ we write 
\begin{equation}\label{eq:ell_pm}
	\ell_- (y,\eta)=\inf \{t\geq 0: \pi\circ \Phi^{-t}(y,\eta)\in U\}, \quad \ell_+ (y,\eta)=\inf \{t\geq 0: \pi\circ \Phi^{t}(y,\eta)\in V\}.
\end{equation} 
By Lemma \ref{lemma_hom}, $\ell_\pm$ are homogeneous of order $-1$.

\subsection{Rigidity results} 
It is well known, see, e.g., \cite[Ch.~45A]{Arnold_Mech}, that given a canonical transformation $(\tilde x,\tilde \xi)=\kappa(x,\xi)$,   the Hamiltonian $\tilde H = H \circ\kappa^{-1}$ 
defines a new Hamiltonian system related to $\tilde H$ with a solution $(\tilde x(s),\tilde \xi(s)) = \kappa( x(s),\xi(s) )$ for any solution $( x(s),\xi(s) )$ of the original one. This can be written as $\kappa\circ\Phi^s= \tilde\Phi^s\circ\kappa$, where $\tilde \Phi^s$ is associated with $\tilde H$, see also \r{Phi} below. Equivalently,  $X_{\tilde H}=\d \kappa X_H $. Here and below, a canonical transformation stands for a symplectic diffeomorphism from its domain (always excluding the zero section) onto its image. 
We are especially interested in homogeneous diffeomorphisms on conical domains, i.e., these that intertwine the dilation operator in the domain and image respectively.
Homogeneous canonical transformations preserve the homogeneity of Hamiltonians and correspond in a suitable sense to contactomorphisms between the corresponding cosphere bundles, see \cite[Appendix 4]{Arnold_Mech}. 

One can show that a diffeomorphism $\kappa$ between open subsets of $T^*M\set$ conjugates the flows of $H$ and $H\circ \kappa^{-1}$ for all $H$ if and only if it is a canonical transformation. Indeed, with the symplectic form written as $J=(0,-\Id;\Id,0)$ in block form in terms of Darboux coordinates as before, the symplectic condition for $\kappa$ can be written as $(\d\kappa)^\mathsf{T}J(\d\kappa)=J$, equivalently  $(\d\kappa)J(\d\kappa)^\mathsf{T}=J$. Recall, $X_H=-J\d H^{\mathsf{T}}$.
Writing  $\tilde H = H \circ\kappa^{-1}$, we have $X_{\tilde H} = -J( \d\kappa^{-1})^\mathsf{T}\d H^\mathsf{T}$. If $\kappa$ is canonical, then $X_{\tilde H} =-\d\kappa J \d H = \d\kappa X_H $, confirming the conjugacy of the flows. On the other hand, if the flows are conjugated for every Hamiltonian $H$, then for all $H$ we have $J( \d\kappa^{-1})^\mathsf{T}\d H^{\mathsf{T}}= \d\kappa J\d H^{\mathsf{T}}$, which implies the symplectic condition for $\kappa$. 

Let $\tilde M$ be a manifold with boundary, having the same boundary as $M$.
Having two Hamiltonians $H$ and $\tilde H$ on $T^*M\set$ and $T^*\tilde M\set$ respectively, we denote the objects corresponding to $\tilde H$ with tildes over them. 
Assume that $(x_0,\tilde \xi^0)\in T^*\tilde M\set$ with $x_0\in U$ is such that \r{A1} holds there for $\tilde H$; then $(x_0,\tilde \xi^0)$ gives rise to a conic set $\tilde \Gamma\subset T^*\tilde M\set $ as in \eqref{eq:Gamma}. Let $\Gamma_U$ be the collection of all $(x,\xi)$ with $x\in U$, $(x,\xi)\in\Gamma$; and define $\Gamma_{V}$, $\tilde \Gamma_{ U}$, $\tilde \Gamma_{V}$ similarly. 
Then for every $\kappa$ homogeneous and canonical,  with $H=\kappa^{*}\tilde H$, if
\begin{equation}   \label{kappa_b}
\kappa(\Gamma_U) =  \tilde \Gamma_U, \quad \kappa(\Gamma_V) =  \tilde \Gamma_V, \quad \text{and $\iota^*\circ \kappa= \iota^*$ on $\Gamma_U\cup\Gamma_V$},
\end{equation}
then $S=\tilde S$ on $\mathcal{U}$. A map $\kappa $ as above is  the analog of a diffeomorphism on $M$ fixing $\p M$ pointwise in the metric setting, and the pullback by the differential of such a diffeomorphism is an example of a canonical transformation $\kappa$ as above.  
Note that $\pi\circ \kappa(x,\xi)=x$ for all $(x,\xi)\in \Gamma_U$ by the first and third property in \eqref{kappa_b}, and similarly for $V$. There is a rich non-trivial set of such transformations, see, e.g.,  Remarks~\ref{rmk:convex} and \ref{rem_can}.

The converse is also true, which is our semi-global rigidity theorem. 

\begin{theorem}[semi-global rigidity for Hamiltonian systems at a positive energy level] \label{thm_H} 
With notations as before, assume that $ S^{H}=S^{\tilde H}$, $ \ell=\tilde\ell$ on $\mathcal{U}$ (relative to incoming covectors $(x_{0},\xi^{0})$, $(x_{0},\tilde\xi^{0}) $). Then there exists a homogeneous canonical transformation $\kappa: \Gamma\to \tilde \Gamma$ so that 
\r{kappa_b} holds, 
and one has $ \kappa^*\tilde H=  H$ on $ \Gamma$.
\end{theorem}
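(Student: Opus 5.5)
We construct $\kappa$ by transporting along the two flows from the boundary, i.e.\ by matching the bicharacteristics of $H$ and $\tilde H$ that have the same boundary data on $U$. Every point of $\Gamma$ can be written uniquely as
\begin{equation}
\Phi^{t}\big(\mathcal{M}_\lambda\,\zeta(x,\xi')\big), \qquad (x,\xi')\in\mathcal{U},\ \lambda>0,\ 0\le t\le \ell(x,\xi')/\lambda .
\end{equation}
Uniqueness uses \r{A1}, the homogeneity of Lemma~\ref{lemma_hom}, and the first-hit definition of $\ell_\pm$. Equivalently, the map
\begin{equation}
F:(x,\xi',\lambda,t)\mapsto \Phi^t\big(\mathcal{M}_\lambda\zeta(x,\xi')\big)
\end{equation}
is a diffeomorphism onto $\Gamma$. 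Its differential is invertible because $\zeta$ is transversal to $T^*M|_U$ by \r{A1}, the radial field is transversal to the energy levels, and $X_H$ is transversal to $T^*M|_U$. Define $\tilde F$ in the same way using $\tilde H$ and $\tilde\zeta$, and set
\begin{equation}
\kappa=\tilde F\circ F^{-1}.
\end{equation}
Since $\ell=\tilde\ell$, the $t$-ranges agree, so $\kappa$ maps $\Gamma$ onto $\tilde\Gamma$.

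\textbf{Elementary properties.} By construction $\kappa\circ\Phi^s=\tilde\Phi^s\circ\kappa$ and $\kappa\circ\mathcal{M}_\lambda=\mathcal{M}_\lambda\circ\kappa$, so $\kappa$ is homogeneous. Since $H$ and $\tilde H$ are constant along their flows and equal $\lambda^2/2$ on the corresponding data, $\tilde H\circ\kappa=H$. On $\Gamma_U$ (where $t=0$) we have $\kappa=\tilde\zeta_\lambda\circ\zeta_\lambda^{-1}$, which preserves $\iota^*$. On $\Gamma_V$ (where $t=\ell/\lambda$) we have $\kappa(\hat S_\lambda\zeta_\lambda(x,\xi'))=\hat{\tilde S}_\lambda\tilde\zeta_\lambda(x,\xi')$, and the $\iota^*$ of both sides is $S_\lambda=\tilde S_\lambda$. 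This gives \r{kappa_b}, with $\kappa(\Gamma_V)=\tilde\Gamma_V$ coming from $\ell=\tilde\ell$.

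\textbf{The main step: $\kappa$ is symplectic.} We work in the coordinates $(x,\xi',\lambda,t)$, or better in $(x,\xi',h,t)$ with $h=\lambda^2/2$. We claim that $F^*\sigma=\sigma_{T^*U}+\d h\wedge \d t$, possibly up to sign convention; the same computation for $\tilde F$ gives the same form, so $\kappa^*\tilde\sigma=\sigma$. To verify the claim, first note that $\Phi^t$ preserves $\sigma$ and $\iota_{X_H}\sigma=\pm\d H$. This gives
\begin{equation}
F^*\sigma=G^*\sigma+\d h\wedge\d t \quad\text{(up to sign)},
\end{equation}
where $G(x,\xi',h)=\mathcal{M}_\lambda\zeta(x,\xi')$ parametrizes $\Gamma_U$. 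Next, $G^*\alpha=\xi\,\d x|_{TU}=\xi'\d x'$, because the image lies over $U$, so the $\d x^n$ component drops out. Hence $G^*\sigma=\d\xi'\wedge\d x'$, and this is independent of $H$. The main care needed is the bookkeeping of the $\lambda$-rescaling, and the check that the formula has no hidden $H$-dependence through the $\d h$ terms; there is none, since $G^*\alpha$ involves only $\xi'$. I expect this computation, and not the construction, to be the core. Smoothness up to $V$ is harmless by the transversality at $V$.

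Notably, the argument as sketched never uses $\ell=\tilde\ell$ or $S=\tilde S$ for symplecticity, only for the matching at $V$. This is consistent with the theorem: the data are needed exactly to get the boundary condition \r{kappa_b} on $\Gamma_V$.
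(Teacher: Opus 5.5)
Your construction is the paper's. Indeed $F(x,\xi',\lambda,t)=\Psi(x',t,\lambda\xi',\lambda^2/2)$ with $\Psi$ as in \eqref{Psi}; your parametrization is essentially the one in Remark~\ref{rem_hom}. Hence $\tilde F\circ F^{-1}=\tilde\Psi\circ\Psi^{-1}$ is the same $\kappa$ as in the general case of the paper's proof, and your checks of homogeneity, $\kappa^*\tilde H=H$ and \eqref{kappa_b} also match.

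The only real difference is how you verify symplecticity. The paper writes out the Jacobian of $\Psi$ at $s=0$ in boundary coordinates, checks $A^{\mathsf T}JA=J$ by hand, and then propagates with $\d\Phi^{s_0}$. Your intrinsic computation uses the invariance of $\sigma$ under $\Phi^t$, the identity $X_H\lrcorner\sigma=-\d H$, and the fact that $G^*\alpha$ only sees $\iota^*$ of the covector. It is a cleaner explanation of why that matrix identity holds.

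One correction to your bookkeeping. The covector at $G(x,\xi',h)$ is $\lambda\zeta(x,\xi')$, and its restriction to $TU$ is $\lambda\xi'$, not $\xi'$. So $G^*\alpha=\lambda\,\xi'_j\,\d x'^j$, and with the paper's sign conventions $F^*\sigma=\d(\lambda\xi'_j)\wedge\d x'^j+\d h\wedge\d t$, not $\d\xi'\wedge\d x'+\d h\wedge\d t$. This form is still independent of $H$, so you still get $F^*\sigma=\tilde F^*\tilde\sigma$ and the conclusion stands.
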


\begin{remark}
Locally, every two non-degenerate Hamiltonians are equivalent by a pull-back %
of a canonical transformation, by the Darboux Theorem, see \cite[Theorem~21.1.6]{Hormander3}. Indeed, one can start with $p_1=H$ in a neighborhood of some $(x,\xi)$ and complete it to a symplectic coordinate system $(q,p)$
in which $X_H= \p/\p q^1$. When $H$ is homogeneous, and $X_H$ is never radial (as in our case),  
then one can choose the transformation to be homogeneous as well. Indeed, one can apply   the homogeneous Darboux theorem \cite[Theorem~21.1.9]{Hormander3},  \cite[Prop. 6.1.3]{Hormander_actaII}, to $H_1\coloneq (2H)^{1/2}$, which is never radial by the Euler identity again, to conclude that $H_1$ is  equivalent to $G_1\coloneq \xi_n$ in a conic neighborhood of any $(x_0,\xi^0)\in T^* M\set$; then $H$ is equivalent to $G \coloneq \xi_n^2/2$ with $X_{G}=\xi_n\p_{x^n}$ (with $n$ being the dimension, no summation), or to $|\xi|^2/2$ if we wish. The theorem says that under its assumptions, there exists such a semi-global transformation satisfying certain boundary conditions. Also, it says that we cannot extract anything more from the lens data.
\end{remark}

\begin{proof}[Proof of Theorem~\ref{thm_H}]  
The proof follows an argument, known at least in the metric case, which establishes conjugacy of the flows  under a map in the phase space, see, e.g., \cite{Vargo_09,BohrMonardPaternain}. 
To illustrate the idea, we start first with an easier setup, namely under the stronger assumption that %
$H=\tilde H$ to infinite order at $\partial TM\set $, as well as $\Gamma_U=\tilde \Gamma_U$, and $\Gamma_V=\tilde \Gamma_V$ (so that $\zeta=\tilde\zeta$) and treat the general case afterwards. 
Under these assumptions, 
we can embed $M$ into a larger open manifold $N$ of the same dimension and extend $H$ and $\tilde H$ smoothly to $N\setminus M$  in such  a way that $\tilde H=H$ there. 
Set
\begin{equation}\label{eq:kappa_map}
\kappa(y,\eta)\coloneqq  \tilde \Phi^{\ell_-(y,\eta)}\circ \Phi^{-\ell_-(y,\eta)}(y,\eta),  	
  \end{equation}  %
see Figure~\ref{fig2}, which is  homogeneous by Lemma \ref{lemma_hom} and takes values in $\tilde \Gamma \subset T^{*}\tilde M$ by the equality of the travel times. 
We check that it is invertible. Let $(\tilde y,\tilde \eta)=\kappa(y,\eta)$, which  
lies in the orbit of $(x,\xi)\coloneqq\Phi^{-\ell_-(y,\eta)}(y,\eta)\in \Gamma_U=\tilde \Gamma_U$.
Substituting $(\tilde y,\tilde \eta)=\tilde \Phi^{\ell_-(y,\eta)}(x,\xi)$ into the statement $\tilde \Phi^{-\tilde \ell_-(\tilde y,\tilde \eta)}(\tilde y,\tilde \eta)\in \tilde \Gamma_U$ 
we see that
$\tilde \Phi^{\ell_-(y,\eta)-\tilde \ell_-(\tilde y,\tilde \eta)}(x,\xi)\in \tilde \Gamma_U$.
We have $\ell_-(y,\eta)\geq \tilde \ell_-(\tilde y,\tilde\eta)$ by definition of $\tilde \ell_-$; if $\ell_-(y,\eta)>\tilde\ell_-(\tilde y,\tilde\eta)$, the bicharacteristic $\gamma_{(x,\xi)}$ intersects $\partial T^*\tilde M$ at an incoming covector for positive time. This contradicts the equality of travel times, according to which  $\tilde \Phi^{t}(x,\xi)$ does not leave $T^*M$ for $t\in [0,\ell_-(y,\eta)]\subset [0,\hat\ell(x,\xi)]$. 
We find that $\ell_-(y,\eta)=\tilde \ell_-(\tilde y,\tilde \eta)$, thus $\kappa$ is inverted by $\kappa^{-1}(\tilde y,\tilde \eta)=  \Phi^{\tilde \ell_-(\tilde y,\tilde \eta)}\circ \tilde \Phi^{-\tilde \ell_-(\tilde y,\tilde \eta)}(\tilde y,\tilde\eta).$
Further, it is a canonical transformation: indeed, given $(y,\eta)\in \Gamma$, we can find a locally constant  $t_0>\ell_-(y,\eta)$ such that 
$\Phi^{-t_0}(y,\eta)\in N\setminus M$ and upon noticing that  $\kappa(y,\eta)= \tilde \Phi^{t_0}\circ \Phi^{-t_0}(y,\eta)$ since $H=\tilde H$ on $N\setminus M$,  the symplectic property holds because $\tilde \Phi^{t_0}$ and $\Phi^{-t_0}$ are both symplectic.
By construction and the equality $\ell=\tilde \ell$, $\kappa=\Id$ on $\Gamma_U$ and  $\kappa(\Gamma_V)=\tilde \Gamma_V$. 
Moreover, $\iota^*\circ \kappa=\iota^{*}$ trivially on $\Gamma_U$, and on $\Gamma_V$ we have, with notation as above,
\[\begin{aligned}
 	\iota^{*} \kappa(y,\eta)=\iota^{*}\tilde \Phi^{\ell_-(y,\eta)}(x,\xi)=\iota^{*}\tilde \Phi^{\hat\ell(x,\xi)}(x,\xi)=\iota^{*}{\hat {S}}^{^{\tilde H}}(x,\xi)= S^{\tilde H}(x,\xi')\\ =S^{H}(x,\xi')=\iota^{*}\hat S^{H}(x,\xi)=\iota^{*}\Phi^{\hat\ell(x,\xi)}(x,\xi)=\iota^{*}(y,\eta).
 \end{aligned}
 \]
Since a  Hamiltonian stays constant along its bicharacteristic flow and $H=\tilde H$ on $T^*M\set\big|_U$, 
$\kappa^*\tilde H=H$ on $\Gamma$. 
We also mention that $\kappa$ conjugates the flows:  with $t_0$ as before, 
\begin{equation}
\begin{aligned}\label{Phi}
\tilde \Phi^s\circ\kappa ( y , \eta) & = \tilde \Phi^{t_0+s}\circ \Phi^{-t_0}(y,\eta) = \tilde \Phi^{t_0+s}\circ \Phi^{-t_0-s} \circ \Phi^{s}  (y,\eta)\\
& = \tilde \Phi^{t_0}\circ \Phi^{-t_0} \circ \Phi^{s}  (y,\eta)= \kappa\circ  \Phi^{s}  (y,\eta).
\end{aligned}
\end{equation}
\begin{figure}[h!] %
  \centering
  \includegraphics[scale=1,page=3]{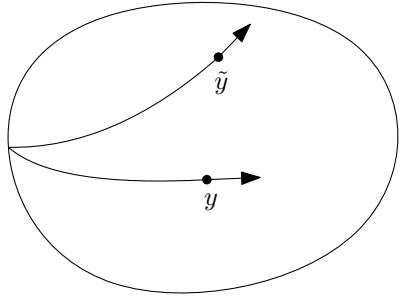}
\caption{%
The canonical relation $\kappa$. The codirections are not plotted.  
}

\label{fig2}
\end{figure}

We now treat the general case. Here we need not have $\Gamma_U=\tilde \Gamma_U$, hence we will replace \eqref{eq:kappa_map} with  
\begin{equation}\label{eq:kappa_2}
	\kappa= \tilde \Phi^{\ell_-(y,\eta)}\circ \tilde \zeta_{\lambda}\circ \iota^{*}\circ \Phi^{-\ell_-(y,\eta)},\quad \lambda = \sqrt{2H(y,\eta)};
\end{equation} the arguments below aim to show its symplectic property.
Choose local coordinates $x'$ on $U$, and let $\xi'$ be the dual ones, in $T^*U$. Define the map $\Psi: ( x',s,\xi',E)\mapsto (y,\eta)$
as follows 
\begin{equation}  \label{Psi}
(y,\eta) = \Phi^{s}((x',0), \xi), 
\qquad  \xi=(\xi',h(x',\xi',E)),\quad E>0,\quad (x',\xi')\in\mathcal{U}_{\sqrt{2E}}, \quad s\in \big[0,\ell_{\sqrt{2E}}(x',\xi')\big].
\end{equation} 
The function $h(x',\xi',E):=(\zeta_{\sqrt{2E}})_{n}(x',\xi')$ 
satisfies
$
H(x',0,\xi',h)=E$
and we have the homogeneity property $h(x',\xi',E)={(2E)}^{1/2}h(x',{(2E)}^{-1/2}\xi',1/2) $.

We claim that $\Psi$ is a canonical transformation on its domain. 
We check the symplectic property at $s=0$ first by   computing the Jacobian there. 
By the implicit relation, we have
\begin{equation}
  \partial _{x'} h=-\frac{\partial_{x'}H}{\partial_{\xi_n }H},\quad \partial _{\xi'} h=-\frac{\partial_{\xi'}H}{\partial_{\xi_n}H},\quad \partial _{E} h = \frac{1}{\partial_{\xi_n}H}.
\end{equation}
We differentiate in $s$ at $s=0$, writing $y=(y',y^n)$ and $\eta=(\eta',\eta_n)$:
\begin{equation}
  \partial_{s} y= \d y X_H = \partial_\xi H,\quad \partial_{s} \eta= \d \eta X_H=- \partial_x H.
\end{equation}
For all other derivatives, when $s=0$, we have $(y',y^n,\eta',\eta_n)=(x',0,\xi',h(x',\xi',E ))$.
Combining the above, the Jacobian for $s=0$ takes the form 
\begin{equation}\label{eq:matrix}
  D_{x',s,\xi',E}(y',y^n,\xi',\eta_n)=
\begin{pmatrix}
	\Id &  \d _{\xi'} H^{\mathsf{T}} & 0 & 0\\
	0 & \partial_{\xi_n} H & 0 & 0\\
	0 & -\d_{x'}H^{\mathsf{T}}  & \Id & 0\\
	-\frac{\d_{x'}H}{\partial_{\xi_n }H} & -\partial_{x^{n}}H & -\frac{\d_{\xi'}H}{\partial_{\xi_n}H} & \frac{1}{\partial_{\xi_n}H}
\end{pmatrix}
  =\begin{pmatrix}
    \Id & C & 0 & 0 &\\
    0 & a & 0 & 0\\
    0 & D & \Id & 0\\ 
    D^{\mathsf{T}}/a & b & -C^{\mathsf{T}}/a & 1/a
  \end{pmatrix},
\end{equation}
where the lowercase letters denote scalars and $C,$ $D$ have dimensions $ (n-1)\times 1$. 
It is now a straightforward computation to check that if $J=(0, -\Id; \Id ,0) $ in block form as before and $A$ is any matrix of the form \eqref{eq:matrix}, one has $A^{\mathsf{T}} JA=J$.
So the map \eqref{Psi} satisfies the symplectic property at $s=0$.
We can now propagate this property away from $s=0$. 
For any fixed $s_0$, 
\begin{equation}\label{eq:symplectic_psi}
	\d \Psi\big|_{s=s_0}=\d (\Phi^{s}(x',0,\xi',h))\big|_{s=s_0}=\d (\Phi^{s_0}\circ \Psi(x',s-s_0,\xi',E))\big|_{s=s_0}=\d \Phi^{s_0}\circ\d  \Psi\big|_{s=0},
\end{equation}
which is symplectic as a composition of symplectic maps. 

We have constructed a canonical transformation $\Psi: (x',s,\xi', E) \mapsto (  y,   \eta)$ which is bijective from  the domain indicated in \r{Psi} to $\Gamma$ by the definition of $\Gamma$. 
It has the homogeneity property
\begin{equation}
	\Psi (x',s,\xi', E)=(y,\eta)\implies \Psi(  x',\lambda^{-1}s,\lambda\xi',\lambda^{2} E)=(y,\lambda \eta),\quad \lambda>0.
\end{equation}
Given $\tilde H$, we construct the same transform related to it. 
Then \eqref{eq:kappa_2} is given by the composition
\[
\kappa: (y,\eta) \overset {\Psi^{-1}}{\longmapsto} (x',s,\xi', E) \overset {\tilde \Psi}{\longmapsto}  (\tilde y, \tilde \eta)
\]
mapping $\Gamma$ to $\tilde\Gamma$, 
which is canonical, and also homogeneous of order one. 
 We still have $\kappa^{*}\tilde H=H$ since  $H(y,\eta) = E=\tilde H(\tilde y,\tilde\eta)=\tilde H(\kappa(y,\eta))$. The rest of the properties are easily verified. 
\end{proof}

\begin{remark}\label{rem_hom}
	Alternatively, we could have considered a change of coordinates $(x',t,\xi',\lambda)\mapsto (y,\eta)\in \Gamma$, given by 
\begin{equation}\label{eq:psi_homog}
	(y,\eta) = \Phi^{t/\lambda}((x',0), \lambda \zeta(x',\xi'/\lambda)),
\end{equation} where $\lambda: =|\eta|_y= (2H(y,\eta))^{1/2}$.
We could then have proceeded in the same way to find that this coordinate change corresponds to a \emph{homogeneous} canonical transformation (unlike our construction in the proof above). 
The current approach however is easier to adapt to the zero energy case in Section \ref{sec_zero_1}, which is the reason we preferred it.
\end{remark}

\begin{remark}
	In each cosphere bundle, the kernel of the canonical 1-form $\alpha = \xi \d x=\xi\partial_\xi \lrcorner \sigma$ defines a maximally non-integrable distribution, in other words a contact structure (see \cite[Appendix 4]{Arnold_Mech}).
	The proof above could also have been formulated using the language of contact geometry,  namely by defining instead of \eqref{Psi} the map $(y,\eta)=\hat \Psi(s,x',\xi')=\Phi^{s}(\zeta(x',\xi')):\R_{t}\times \mathcal{U}_{(x',\xi')}\to S^*M$ and showing that it satisfies $\Psi^{*}\alpha=\alpha'$, where the kernel of $\alpha'=\d s+\xi_j\d x^{j}$ defines a natural contact structure in the domain space. Using this, one would obtain a map between unit sphere bundles, which could then be extended by homogeneity to obtain the required result.
	Again, we did not take this approach because it does not appear to generalize in an obvious way to the zero energy case in Section \ref{sec_zero_1}. An inverse problem for contact forms and Reeb fields is addressed in \cite{KatzContact}.
\end{remark}

\begin{remark} 
So far we have not required $H(x,\xi)$ to be even in the fiber variable $\xi$. If it is, which is often the case with the applications we have in mind, then $(x(s),\xi(s))$ is a unit bicharacteristic if and only if $(x(-s),-\xi(-s))$ is, by Lemma~\ref{lemma_hom}. Then \r{A1} would be satisfied at $(x_0,-\xi^0)$ as well, and we can define $\zeta$ in \r{zeta} as $\zeta(x',-\xi') = -\zeta(x',\xi)$. Then $S(x,-\xi)$ is well defined (the parameter $s$ reverses sign along the ray). If $(y,\eta')=S(x,\xi')$, then $(y,-\eta')=S(x,-\xi')$, i.e., $S$ is odd in the fiber variable. Therefore, knowing $S$ on outgoing rays, in addition to the incoming ones (or vice-versa)  provides no additional information. In that case, $\kappa$ is odd as well. 
\end{remark}

\section{Linearization of the travel times} \label{sec_3}

\subsection{Travel times \texorpdfstring{$T(x,y)$}{T(x,y)}}
We will parameterize now the travel times by their initial and endpoints $x$ and $y$, writing $T(x,y)$. This requires a no-conjugacy assumption. Such travel times correspond to the actual time it takes a signal from $x\in \bo$ to arrive at $y\in\bo$ when the model is the hyperbolic PDE \r{a1}.

\begin{definition}
For $(x,\xi)\in T^*M\setminus 0$,   the exponential map is defined by $\exp_x \xi  = \Phi^{1}_x(x,\xi)$.
\end{definition}
 
\noindent In the metric case, this is the well known exponential map with $\xi$ associated with a vector by the musical isomorphism. 

\begin{definition}
	We say that $y=\exp_x \xi$ is not conjugate to $x$ along the  bicharacteristic issued from $\xi$, if $\exp_x$ is a local diffeomorphism near $\xi$.  If so, we call  $|\xi| \eqcolon T(x,y)$ the travel time between $x$ and $y$. 
\end{definition}

This is a generalization of the same notion in the metric case but there are some possible new effects. We do not a priori know that $x$ and $y$ are not conjugate if they are close enough. Also, note that $T(x,y)$ is defined as the travel time of the unique unit bicharacteristic from $x$ to $y$, when they are close enough to $x_0$ and $y_0$, respectively,  issued with codirection close enough to $\xi^0$, having travel time a priori close enough to $\hat \ell(x_0,\xi^{0})$. 

\subsection{A variational formula}
Assume that $H(x,\xi;s)$ is a one-parameter family of Hamiltonians near $s=0$ with travel times $T(x_1,x_2;s)$. If the Hessian $H_{\xi\xi}$ is positive, we can use the Legendre transform to get a Finsler metric (see Section \ref{sec_Finsler}), and use the fact that the Hamiltonian curves transform into lifts of geodesics, which are critical curves of the energy functional. For the fastest speed, the strict convexity is true in elasticity, at least, but in general it is not. 
 
The next lemma is an analog of the first variation of the energy functional in Riemannian geometry. There are two essential differences however. First, the perturbed curves are Hamiltonian, of some perturbed Hamiltonian function, not just any smooth variation. Second, there is a non-trivial integral term in \r{lt2} below, while in the analogous situation in metric case, where one perturbs the energy for curves near the lift of a geodesic to the tangent bundle, no such term appears. In fact, the total variation then vanishes if the endpoints are fixed, consistent with the well known fact that the geodesic we perturb is a critical point of the energy functional. Relation \eqref{lt2} shows in particular that Hamiltonian curves are not energy-minimizing among nearby curves.

\begin{lemma} \label{lemma_V}
Let $[0,1]\ni t\mapsto \gamma(t;s)\coloneqq  (x(t;s),\xi(t;s))$ be   Hamiltonian curves related to $H(x,\xi;s)$,  smoothly dependent on a real parameter $s$. 
Then the energy
\[
E(\gamma(\,\cdot\,;s)) \coloneq \int_0^{1} H(x(t;s),\xi(t;s);0)\,\d t
\]
satisfies 
\begin{equation}  \label{lt2}
	\frac{\d}{\d s}\Big|_{s=0} E(\gamma(\,\cdot\,;s)) = \langle \xi(t;0),  x_s(t;0) \rangle\Big|_{t=0}^{t=1} - 2  \int_0^1 H_s(x(t;0),\xi(t;0);0)\,\d t.
\end{equation}  

\end{lemma}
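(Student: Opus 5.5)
The plan is a direct computation: differentiate under the integral, rewrite the integrand using Hamilton's equations at $s=0$, integrate by parts once, and then close the argument with Euler's identity. Write $I$ for the left-hand side of \r{lt2}. Since the curves depend smoothly on $s$ and the Hamiltonian inside $E$ is frozen at $s=0$,
\[
I=\int_0^1 \big( H_x\, x_s + H_\xi\, \xi_s\big)\,\d t ,
\]
with everything evaluated at $(x(t;0),\xi(t;0);0)$.

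For $s=0$ the curve is Hamiltonian for $H(\cdot;0)$, so $H_\xi=\dot x$ and $H_x=-\dot\xi$. The integrand becomes $-\dot\xi\cdot x_s+\dot x\cdot \xi_s$. I integrate the first term by parts:
\[
-\int_0^1\dot\xi\cdot x_s\,\d t=-\langle \xi,x_s\rangle\Big|_{t=0}^{t=1}+\int_0^1 \xi\cdot \p_t x_s\,\d t .
\]
Then I commute the derivatives, $\p_t x_s=\p_s\dot x$, so the remaining integrand is $\xi\cdot\p_s\dot x+\dot x\cdot\xi_s=\p_s(\xi\cdot \dot x)$.

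The key step, and the only nontrivial one, is to use the Hamiltonian equation for \emph{each} $s$, namely $\dot x(t;s)=H_\xi(x,\xi;s)$, together with Euler's identity $\xi\cdot H_\xi=2H$ from degree-two homogeneity. This gives $\xi\cdot\dot x=2H(x(t;s),\xi(t;s);s)$ for every $s$. Differentiating in $s$ at $s=0$ yields
\[
\p_s(\xi\cdot\dot x)=2\big(H_x x_s+H_\xi\xi_s\big)+2H_s .
\]
Its integral is therefore $2I+2\int_0^1 H_s\,\d t$, which reproduces $I$ itself.

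Collecting the terms gives $I=-\langle\xi,x_s\rangle\big|_0^1+2I+2\int_0^1 H_s\,\d t$. Solving for $I$ gives exactly \r{lt2}. The main point to be careful about is this self-referential step: $I$ reappears with coefficient $2$, and that is what produces the sign of the boundary term and the factor $-2$ in front of $\int H_s$. I would double-check the signs there. Everything else is routine; the needed smoothness holds since $H$ is smooth and the curves stay away from the zero section.
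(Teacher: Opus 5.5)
Your proposal is correct and follows essentially the same route as the paper: you differentiate under the integral, substitute Hamilton's equations, integrate the $\dot\xi\cdot x_s$ term by parts, and then use Euler's identity in the form $\xi\cdot\dot x=2H(x,\xi;s)$ for every $s$, so that the integrated expression equals $2I+2\int_0^1H_s\,\d t$. Solving the resulting self-referential identity for $I$ gives \eqref{lt2} with the correct signs. The only difference is cosmetic: the paper writes the integrand as the total $s$-derivative of $H_\xi\cdot\xi$, whereas you write it as $\partial_s(\xi\cdot\dot x)$.
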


\begin{proof} 	
We have 
	\[
	  \frac{\p}{\p s} \Big|_{s=0} H(x(t;s),\xi(t;s);0) =  H_x\cdot x_s+ H_\xi\cdot \xi_s =   - \dot\xi\cdot x_s+ H_\xi\cdot \xi_s,
	\]
all restricted to $s=0$. 	
Integrate  with respect to $t$ from $0$ to $1$, and use integration by parts for the term $\dot\xi\cdot  x_s$. We get
	\begin{align}
		\frac{\d}{\d s}\Big|_{s=0} E(\gamma(\,\cdot\,;s))&=\int_0^1 (- \dot\xi \cdot x_s+ H_\xi\cdot \xi_s)\,\d t \\
		& =  %
		-\langle \xi(t;0), x_s(t;0)\rangle\Big|_{t=0}^{t=1} 
		+ \int_0^1 (\xi \cdot  \dot x_s+ H_\xi  \cdot \xi_s)\,\d t\\
		& =-\langle\xi(t;0), x_s(t;0)\rangle\Big|_{t=0}^{t=1} + \int_0^1 ( (\p_s H_{\xi})\cdot \xi  + H_\xi  \cdot \xi_s)\,\d t,\label{Hs1}
	\end{align}
where, again, all quantities are restricted to $s=0$. 	
	Here, $(\p_s H_{\xi})$ stands for the $s$-derivative of $H_\xi(x(t;s),\xi(t;s);s)$ with respect to all occurrences of $s$ at $s=0$, not just the last one. By the Euler identity,  the integrand in \r{Hs1} equals
	\[
	\frac{\p}{\p s}\big(H_\xi(x(t;s),\xi(t;s);s)\cdot\xi(t;s)\big) =  2 \p_s \big(H (x(t;s),\xi(t;s);s)  \big)
	\]
at $s=0$, and in particular, independent of $t$. This yields
	\begin{equation}  \label{lt1}
\frac{\d}{\d s}\Big|_{s=0} E(\gamma(\,\cdot\,;s)) = -\langle \xi(t;0),  x_s(t;0) \rangle\Big|_{t=0}^{t=1} + 2\frac{\p}{\p s}\Big|_{s=0} \int_0^1 H (x(t;s),\xi(t;s);s)\,\d t  .
\end{equation}  
For the latter term, we have
\begin{equation}\label{T22}
2\frac{\p}{\p s}\Big|_{s=0}\int_0^1 H (x(t;s),\xi(t;s);s) \,\d t
	=2 \frac{\d}{\d s}\Big|_{s=0} E(\gamma(\,\cdot\, ;s)) +  
	2\int_0^1 H_s(x(t;0),\xi(t;0);0)\,\d t. 
\end{equation}
Combining \eqref{lt1} and \eqref{T22} proves \r{lt2}.
\end{proof} 

\begin{example}
We compare the energy above with its counterpart in the metric case. The energy there is defined as an integral of $g_{ij}\dot \gamma^i \dot\gamma^j$ however. Converting this to the cotangent bundle, we have $ g^{ij} (g\dot\gamma)_i (g\dot\gamma)_j$, with $  g\dot\gamma=\xi$. Then one varies $\gamma$ by keeping $g$ the same. In the way we defined the energy in \r{lt1} however, in the metric case $\xi(t;s)$ is obtained from $\dot\gamma$ by lowering the index with $g(x;s)$, not with $g(x;0)$. Then the $s$-derivative would have additional terms. Using $\delta$ to denote variation, i.e., the $s$-derivative at $s=0$, %
we have 
\begin{align}
 \p_s|_{s=0}\left(H(x(t;s),\xi(t;s);0) \right) & = g^{ij} \delta ((g\dot\gamma)_i  (g\dot\gamma)_j)\\ 
 =  %
 2\big( g^{-1} (\delta g)\dot\gamma\big) &\cdot (g\dot\gamma) +  2 \big(g^{-1}(g \delta\dot\gamma )\big)\cdot g\dot \gamma
 = %
-2( (\delta g^{-1})(g\dot\gamma))\cdot (g\dot\gamma) + 2 \delta\dot\gamma \cdot g\dot \gamma. %
\end{align} 
Integrate this identity with respect to $t$ from $t=0$ to $t=1$. 
The first term gives us an integral of $ -2H_s (x(t;0),\xi(t;0);0)$. The second one is $\delta ((g(\,\cdot\,;0)\dot\gamma)\cdot\dot\gamma) $ for which we can apply the first variation formula of the energy of a geodesic to get the boundary terms in \r{Hs1}. Therefore, \r{Hs1} is confirmed in this case. 

Back to the general case, when the Legendre transform $\mathcal{L}$ is invertible (see section~\ref{sec_Finsler}), the curves solving the Euler-Lagrange equation on $TM$ minimize the action locally. They are critical curves of the action functional, which is how the Euler-Lagrange equation is derived in the first place.  The conversion to the energy defined on $T^*M$ above involves $\mathcal{L}_s$ depending on $s$, then being differentiated as $s=0$. This is the reason we cannot expect  the integral  \r{lt2} to be zero in general. 
\end{example}

\subsection{Linearization of \texorpdfstring{$T(x,y)$}{T(x,y)}}

\begin{corollary} \label{cor_T}
Assume that $x_1$ and $x_2 \in M$ 
are not conjugate along %
the bicharacteristic issued from $(x_1,\xi^1)$ related to $H(x,\xi,0)$. 
Let $[0,1]\ni t\mapsto (x(t;s),\xi(t;s))$ be   Hamiltonian curves related to $H(x,\xi;s)$, smoothly dependent on $s$ for $|s|\ll1$, such that their $x$-projections  connect $x_1$ and $x_2$. 	Then 
\begin{equation}  \label{lt}
 \frac{\p}{\p s}\Big|_{s=0} \frac12  T^2(x_1,x_2;s) =  - \int_0^{1} H_s(x(t;0),\xi(t;0);0)\,\d t.
	\end{equation}  
Parameterizing the bicharacteristics as unit ones, we get
\begin{equation}  \label{lt3}
 \frac{\p}{\p s} \Big|_{s=0} T(x_1,x_2;s) = - \int_0^{T(x_1,x_2;0)} H_s(x(t;0),\xi(t;0);0)\,\d t.
\end{equation}  	
\end{corollary}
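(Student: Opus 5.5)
We will derive Corollary~\ref{cor_T} directly from Lemma~\ref{lemma_V}, applied to the family of curves $\gamma(\cdot\,;s)$ in the statement, with each curve parameterized on $[0,1]$. The key observation is that for a Hamiltonian curve on $[0,1]$ with fixed base endpoints, the energy is a simple function of the travel time. Indeed, $H$ is constant along its own bicharacteristics. By Lemma~\ref{lemma_hom}, the curve on $[0,1]$ issued from $(x_1,\xi(0;s))$ is the rescaling by $\lambda=|\xi(0;s)|_{x_1}$ (norm w.r.t.\ $H(\cdot;s)$) of the unit one, and that unit curve reaches $x_2$ at time $\lambda = T(x_1,x_2;s)$. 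Hence
\[
H(x(t;s),\xi(t;s);s)=\tfrac12 T^2(x_1,x_2;s)\quad\text{for all } t\in[0,1].
\]
Here the no-conjugacy assumption is what allows the connecting curves to be chosen, locally uniquely, with $\xi(0;s)$ depending smoothly on $s$ near $\xi^1$. This is done via the inverse function theorem applied to $\exp_{x_1}$, perturbed in $s$, and it makes $T(x_1,x_2;s)$ smooth in $s$.

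Next we apply Lemma~\ref{lemma_V}. Since $x(0;s)=x_1$ and $x(1;s)=x_2$ for all $s$, we have $x_s(0;0)=x_s(1;0)=0$, so the boundary term vanishes. This gives
\[
\frac{\d}{\d s}\Big|_{s=0}\int_0^1 H(x(t;s),\xi(t;s);0)\,\d t=-2\int_0^1 H_s\,\d t .
\]
The left side is not yet $\frac{\d}{\d s}\frac12T^2$, since the Hamiltonian inside is frozen at $s=0$. So we also need
\[
\frac{\d}{\d s}\Big|_{s=0}\int_0^1 H(x(t;s),\xi(t;s);s)\,\d t = \frac{\d}{\d s}\Big|_{s=0}E+\int_0^1 H_s\,\d t = -\int_0^1 H_s\,\d t,
\]
which is exactly \eqref{T22}, i.e.\ the chain rule separating the explicit $s$-dependence. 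By the observation above, the left side equals $\frac{\d}{\d s}|_{s=0}\frac12T^2(x_1,x_2;s)$, which proves \eqref{lt}.

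For \eqref{lt3}, write $\frac{\d}{\d s}\frac12T^2=T\,T_s$ at $s=0$. The curve on $[0,1]$ is related to the unit curve $(x^u(\tau),\xi^u(\tau))$, $\tau\in[0,T]$, by $x(t)=x^u(Tt)$ and $\xi(t)=T\xi^u(Tt)$, by Lemma~\ref{lemma_hom}. Since $H_s$ is homogeneous of degree two in $\xi$, this gives
\[
\int_0^1 H_s(x(t),\xi(t))\,\d t=T^2\int_0^1 H_s(x^u(Tt),\xi^u(Tt))\,\d t = T\int_0^T H_s(x^u,\xi^u)\,\d\tau .
\]
Dividing by $T\ne0$ yields \eqref{lt3}.

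The only delicate point is the first step: justifying that the prescribed family (connecting $x_1$ to $x_2$ with parameter interval $[0,1]$) exists, is smooth in $s$, and has constant energy $\frac12T^2$. For this we rely on the non-conjugacy hypothesis, applying the implicit function theorem to $(\xi,s)\mapsto \pi\Phi^1_s(x_1,\xi)=x_2$. The rest is bookkeeping with Lemma~\ref{lemma_V} and homogeneity.
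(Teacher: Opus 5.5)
Your argument is correct and is essentially the paper's proof: the connecting curves parametrized on $[0,1]$ have constant energy $\frac12 T^2(x_1,x_2;s)$, Lemma~\ref{lemma_V} with vanishing boundary terms combined with \eqref{T22} gives \eqref{lt}, and the rescaling $\tilde t = Tt$ together with the degree-two homogeneity of $H_s$ yields \eqref{lt3}. Your extra remark is a justification the paper leaves implicit: non-conjugacy, via the implicit function theorem, makes $\xi(0;s)$ and $T(x_1,x_2;s)$ smooth in $s$.
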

 
\begin{proof}
Apply Lemma~\ref{lemma_hom} with $\lambda = T(x_1,x_2;s)$. Then the rescaled  curves in the current lemma are on energy levels $T^2(x_1,x_2;s)/2$ and they reach $x_2$ for $t=1$. 
	We have 	
\begin{equation}  \label{T21}
\frac12	T^2(x_1,x_2;s) = \int_0^1H(x(t;s),\xi(t;s);s)\,\d t
\end{equation}  
	in a trivial way because we are integrating the constant $T^2/2$ on the right.
We get \r{lt} from  \r{lt2}  and \r{lt1}.
  We make the change of the parameterization $\tilde t=Tt$. Then $ (x(\tilde t/T), T^{-1}\xi(\tilde t/T))$ is a Hamiltonian curve at energy level $1/2$. Performing that change of variables, we get \r{lt3}. 
\end{proof}

\begin{example}
Let $g$ be a Riemannian metric and let $H=\frac12 g^{ij}\xi_i\xi_j$. We have $H=\frac12 g_{ij}\dot\gamma^i\dot\gamma^j$ along the flow, where $\gamma$ are the geodesics. Energy level $1/2$ means unit speed geodesics $\gamma(t)$. Set $t=\tau T(x_1,x_2;s)$ and $\tilde\gamma(s) = \tilde \gamma(T(x_1,x_2;s)\tau)$.
Then the ``travel time'' is one in the $\tau$ parameter. We have $\frac{\d}{\d \tau} \tilde \gamma( \tau) = T(x_1,x_2;s) \dot\gamma (T(x_1,x_2;s) \tau)$, having length $T(x_1,x_2;s)$. The energy $E$ is given by
\[
E(\gamma(\,\cdot\,, s)) =\frac12 \int_0^1 g_{ij}(\tilde\gamma;s)  \dot{\tilde\gamma}^i(\tau;s) \dot{\tilde\gamma}^j(\tau;s)\,\d\tau = \frac12 T^2(x_1,x_2;s). 
\]
 Differentiate with respect to $s$ and use the minimizing property of the geodesics to get
\[
TT_s|_{s=0} = \frac12 \int_0^1 (\p_s g)_{ij}(\tilde\gamma)  \dot{\tilde\gamma}^i \dot{\tilde\gamma}^j|_{s=0}\,\d\tau.
\]
Going back to the original parameterization, we obtain
\begin{equation}   \label{dT}
\p_s T|_{s=0} = \frac12 \int_0^{T(x_1,x_2;0)} (\p_s g)_{ij}  \dot{\gamma}^i \dot{\gamma}^j|_{s=0}\,\d t,
\end{equation}
a well known formula. 
We will convert this to the cotangent bundle. The co-metric is $g^{-1}$, and its $s$-derivative is $\p_s g^{-1} = -g^{-1}(\p_s g) g^{-1}$. Then the variation $H_s$ of the Hamiltonian, written in the tangent bundle, becomes $H_s= -\frac12 (\p_s g)_{ij}  \dot{\gamma}^i \dot{\gamma}^j$. Then we get the same formula as in Corollary~\ref{cor_T}. This formula in the metric case is well known but it is usually written in the form \r{dT}, not in its covector version \r{lt}. 
\end{example}

\subsection{\texorpdfstring{$T(x,y)$}{T(x,y)} as a generating function of \texorpdfstring{$S$}{S}}
\begin{corollary}  \label{cor_dT}
Assume that $x_0$ and $y_0$ are not conjugate along   the bicharacteristic $\gamma_0$ issued from $(x_0,\xi^0)$, relative to $H(x,\xi)$.  Let 
$T(x,y)$ be the (locally defined) travel time between $x$ and $y$ in a neighborhood of $(x_0,y_0)$. Then the following holds near $(x_0,y_0)$. 
\begin{enumerate}[leftmargin=.8 cm,label=(\alph*)]
	\item \label{cordT:itema}
 
 $(y,\d_yT)= \Phi^{T(x,y)} (x,-\d_x T)$;

\item \label{cordT:itemb}
 We further have that 
\[
C \coloneq \left\{(x,- \d'_xT,y, \d_y'T) \right\}
\]
 is  the graph of the scattering relation $S$ ({relative to an incoming covector $(x_0,\xi^0)$ for which  \r{A1} holds),} in a neighborhood of $(x_0,\xi^0{}',y_0,\eta^0{}')$.  
 \end{enumerate}
\end{corollary}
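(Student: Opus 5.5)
The plan is to derive (a) from a first-variation formula for $T$ with the Hamiltonian held fixed, and then read off (b) by restricting the resulting covectors to $T\bo$.

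\emph{Setting up.} By the non-conjugacy assumption, $\exp_x$ is a local diffeomorphism near $\xi^0 T_0$, where $T_0=T(x_0,y_0)$. Since the fiber variable is what we invert, the inverse function theorem applied to $(x,\xi)\mapsto(x,\exp_x\xi)$ gives a smooth local inverse $(x,y)\mapsto \Xi(x,y)$ with $\exp_x\Xi(x,y)=y$. Then $T(x,y)=|\Xi(x,y)|_x$ is smooth near $(x_0,y_0)$. For fixed $(x,y)$, let $t\mapsto(x(t),\xi(t))$, $t\in[0,1]$, be the Hamiltonian curve issued from $(x,\Xi(x,y))$. It lies on the energy level $T^2/2$, and its $x$-projection joins $x$ to $y$.

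\emph{Proof of (a).} Take any smooth one-parameter family $(x_1(s),x_2(s))$ of endpoints through $(x,y)$, and the corresponding curves $\gamma(\cdot;s)$ issued from $(x_1(s),\Xi(x_1(s),x_2(s)))$. All of these belong to the same, $s$-independent, Hamiltonian $H$. Lemma~\ref{lemma_V} then applies with $H_s=0$, so the integral term drops out. Moreover $E(\gamma(\cdot;s))=\frac12T^2(x_1(s),x_2(s))$, since we integrate a constant over $[0,1]$. The lemma therefore gives
\[
T\,\d_xT\cdot\dot x_1+T\,\d_yT\cdot\dot x_2=\langle\xi(1),\dot x_2\rangle-\langle\xi(0),\dot x_1\rangle .
\]
Since $\dot x_1$ and $\dot x_2$ are arbitrary, $T\d_xT=-\xi(0)$ and $T\d_yT=\xi(1)$. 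Rescaling by Lemma~\ref{lemma_hom} with $\lambda=1/T$ turns the curve into the unit bicharacteristic issued from $(x,-\d_xT)$, which arrives at time $T$ at $(y,\xi(1)/T)=(y,\d_yT)$. This is exactly (a). The only care needed is to check that the variations in Lemma~\ref{lemma_V} are legitimate here, i.e.\ that the curves depend smoothly on $s$. This holds because $\Xi$ is smooth.

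\emph{Proof of (b).} Restrict $x\in U$ and $y\in V$, both near $x_0$ and $y_0$. The covector $\xi\coloneq-\d_xT(x,y)$ is unit, and by continuity it is close to $\xi^0$. Hence it is the locally unique unit solution $\zeta(x,\xi')$ with pullback $\xi'=\iota^*\xi=-\d'_xT$, where $\d'_x$ denotes the differential along $U$, which is the pullback of $\d_xT$. By (a), the unit bicharacteristic from $(x,\xi)$ reaches $(y,\d_yT)$ at time $T(x,y)$. By the local uniqueness built into Definition~\ref{def_Ls}, this is the first transversal hit of $V$, namely $\hat S(x,\xi)$. It follows that $S(x,-\d'_xT)=(y,\d'_yT)$, so $C\subset\operatorname{graph}S$.

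The main obstacle is the reverse inclusion, together with the claim that $C$ fills a neighborhood in the graph. For this I would use the map $(x,y)\mapsto(x,-\d'_xT)$, with $x\in U$ and $y\in V$. By non-conjugacy, the map $\xi\mapsto y$ from unit covectors at $x$ to $V$ is a local diffeomorphism. The reason is that transversality at $V$, together with (A1) and the homogeneity of $\exp$, lets one trade radial scaling for travel time; this identifies the graph as a $2(n-1)$-dimensional manifold locally parametrized by $(x,y)$. Then $C$ is an open piece of the graph of $S$ containing $(x_0,\xi^0{}',y_0,\eta^0{}')$, so the two coincide near that point.
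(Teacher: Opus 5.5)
Your proposal is correct and follows the paper's proof. For part (a), you apply Lemma~\ref{lemma_V} with $H_s=0$, use $E=\frac12T^2$, compare coefficients of the arbitrary endpoint variations, and rescale to unit speed; for part (b), which the paper simply calls immediate from (a), your forward inclusion via $\zeta$ and transversality at $V$, together with the local parametrization of both $C$ and the graph of $S$ by $(x,y)$, correctly supplies the omitted details.
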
 

\begin{proof}
We apply Lemma~\ref{lemma_V}, where the Hamiltonian is independent of the parameter $s$ and the curves depend on $s$ only   
through their endpoints. Then $H_s=0$ in \r{lt2}, and the variation of the energy relation looks similar to its metric equivalent. 
Let $x(s)$, $y(s)$ be smooth curves in $M$
for $|s|\ll1$ with $x(0)=x_0$, $y(0)=y_0$. Let $(q(t;s), p(t;s))$ be the  smooth Hamiltonian curves with $q(0,s)= x(s)$, $q(1,s)= y(s)$. Set $p(0,0)= \xi^0$,  $p(1,0)=\eta^0$, $v=q_s(0;0)$, $w=q_s(1,0)$. Then, see \r{T21}, 
\begin{equation}  \label{T2}
\p_s \frac12 T^2(x(s),y(s))|_{s=0} =  \langle \eta^0,w\rangle- \langle \xi^0,v\rangle.
\end{equation}  
The derivative on the left equals $T \langle \d_x T, v\rangle + T \langle \d_y T, w\rangle$ 
at $s=0$. Divide by $T$ to get
\[
\p_s  T(x(s),y(s))|_{s=0} =  \langle \tilde \eta^0,w\rangle- \langle\tilde  \xi^0,v\rangle,
\]	
where $\tilde \xi^0$ and $\tilde\eta^0$ are normalized now to unit ones. Since this is true for every two vectors $v$ (at $x_0$), and $w$ (at $y_0$), this proves \ref{cordT:itema}. 

Part \ref{cordT:itemb} follows immediately from \ref{cordT:itema}.
\end{proof}

\subsection{The X-ray transform} 
We define the X-ray transform 
\begin{equation}  \label{X}
\X f(\gamma) = \int f\circ\gamma(t)\,\d t = \int f(x(t),\xi(t))\, \d t, \quad f\in C^\infty(S^*M)
\end{equation}  
over all unit maximal Hamiltonian curves $\gamma$ through $S^*M$, assuming they are defined for finite time. We can also define $\X f$ over not-necessarily unit curves by assuming that $f(x,\xi)$ is homogeneous of order $m=0,1,\dots$ in $\xi$; then  $\X f(x,\xi)$ is homogeneous of order $m-1$ when parameterized by initial points. 
The transform $\X$ is a linearization of the travel times, as Corollary~\ref{cor_T}, equation \r{lt3} shows. There, $f=H_s|_{s=0}$ is a priori homogeneous of order two  but since it is restricted to $H=1/2$, the homogeneity is irrelevant. 

The transform $\X$ is easy to invert modulo a gauge. 

\begin{theorem}[Inversion of $\X$] \label{thm_X1}
Let $H$, $\mathcal{U}$, $\mathcal{V}$, and $\Gamma$ be as in the discussion preceding Theorem~\ref{thm_H}. Then for every $f\in C^\infty(S^*M\cap \Gamma)$, $\X f=0$ for all maximal unit bicharacteristics in $\Gamma$  if and only if $f=X_H\phi$  with some $\phi\in C^\infty(S^*M\cap \Gamma)$ so that $\phi=0$ on $\Gamma_{U}\cup\Gamma_{V}$. 
\end{theorem}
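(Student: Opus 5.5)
The natural approach is to work in the flow-box coordinates on $S^*M\cap\Gamma$ furnished by the unit bicharacteristics themselves. Every point $(y,\eta)\in S^*M\cap\Gamma$ lies on a unique unit bicharacteristic issued from some $(x,\xi)=\zeta(x,\xi')$ with $(x,\xi')\in\mathcal{U}$. Here the map
\[
(x',\xi',s)\mapsto \Phi^{s}\big(\zeta(x',\xi')\big),\qquad (x',\xi')\in\mathcal U,\quad s\in[0,\ell(x',\xi')],
\]
is the restriction to $E=1/2$ of the map $\Psi$ in \r{Psi}. It is therefore a diffeomorphism onto $S^*M\cap\Gamma$, as established in the proof of Theorem~\ref{thm_H}. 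In these coordinates $X_H=\partial_s$, and $\ell_-(y,\eta)=s$ while $\ell_+(y,\eta)=\ell(x',\xi')-s$. Since both $\ell_\pm$ are smooth (the rays hit $U$ and $V$ transversally by \r{A1}, so the implicit function theorem applies), all constructions below are smooth.

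For the ``if'' direction, let $f=X_H\phi$ with $\phi=0$ on $\Gamma_U\cup\Gamma_V$. Along any maximal unit bicharacteristic $\gamma$ in $\Gamma$, which runs from $\Gamma_U$ at $t=0$ to $\Gamma_V$ at $t=\hat\ell$, we have
\[
\X f(\gamma)=\int_0^{\hat\ell}\frac{\d}{\d t}\phi(\gamma(t))\,\d t=\phi(\gamma(\hat\ell))-\phi(\gamma(0))=0.
\]

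For the ``only if'' direction, define
\[
\phi(y,\eta)\coloneqq \int_0^{\ell_-(y,\eta)} f\big(\Phi^{-t}(y,\eta)\big)\,\d t,
\]
that is, $\phi(x',\xi',s)=\int_0^s f(\Phi^{\tau}\zeta(x',\xi'))\,\d\tau$ in flow-box coordinates. Then $X_H\phi=\partial_s\phi=f$. On $\Gamma_U$ we have $s=0$, so $\phi=0$ there. On $\Gamma_V$ we have $s=\ell(x',\xi')$, so $\phi$ equals $\X f$ of the corresponding ray, which vanishes by hypothesis. Smoothness of $\phi$ follows from smoothness of the flow-box diffeomorphism and of $\ell$.

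The only delicate point is the smoothness of the coordinates up to the boundary pieces $\Gamma_U$ and $\Gamma_V$. This is guaranteed by transversality \r{A1} at both ends together with the local invertibility argument from Theorem~\ref{thm_H}. One should also note that the gauge $\phi$ is unique: if $X_H\phi=0$, then $\phi$ is constant along rays, and since it vanishes at $s=0$ it vanishes identically.
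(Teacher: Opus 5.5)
Your proof is correct and is essentially the paper's argument. The paper defines $\phi(x,\xi)=\int_{-\ell_-(x,\xi)}^0 f(\Phi^s(x,\xi))\,\d s$, which is your $\phi$ after the substitution $s=-t$, and then notes that $\phi=0$ on $\Gamma_U$ trivially and on $\Gamma_V$ because $\X f=0$; your flow-box coordinates (the $E=1/2$ slice of $\Psi$) only make explicit the smoothness of $\ell_-$ and $\phi$, which the paper leaves implicit.
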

\begin{proof} 
The ``if'' part is obvious. For the ``only if'' statement, set $\phi(x,\xi) = \int_{-\ell_{-}(x,\xi)}^0 f(\Phi^s(x,\xi))\,\d s$. Clearly, $\phi=0$ on $\Gamma_{U}$. Since $\X f=0$, $\phi=0$ on $\Gamma_{V}$. 
\end{proof}

We show now that linearizing a family of gauge equivalent Hamiltonians we obtain the natural nullspace of $\X$. Let $\kappa(\,\cdot\, ,\epsilon) = \Id + \epsilon F+O(\epsilon^2) $, $|\epsilon|\ll1$ be a family of homogeneous canonical transformations satisfying $\pi\circ \kappa(\Gamma_U),\pi\circ\kappa(\Gamma_V)\subset \bo$ and $\iota^{*}\circ \kappa=\iota^{*}$ on $\Gamma_U\cup \Gamma_V$. 
We may assume that we are working in a small enough neighborhood of our reference bicharacteristic that $\Gamma_U$ is contractible; then $\Gamma $ is also contractible.
The infinitesimal canonical transformation  $F$ is of the form $F= X_\phi$ for some $\phi\in C^\infty(\Gamma)$, uniquely determined up to constant.
Indeed, since $\kappa(\cdot, \epsilon)$ are symplectic, by  Cartan's formula
$\d (F\lrcorner\sigma) =\L_F\sigma=0$ ($\L$ is the Lie derivative), and because $\Gamma$ is contractible, $F\lrcorner\sigma=-\d \phi$ for some $\phi$, that is, $F=X_\phi$.
Further, by homogeneity of $\kappa$ we have 
\begin{equation}
	\xi \phi_{\xi \xi }=0,\quad \xi \phi_{x \xi }=\phi_x
	\implies d(\phi-\xi\phi_\xi)=0\text{ on }\Gamma		.\label{eq:homog_taut}
\end{equation} %

We now check that $\phi=0$ on $\Gamma_U\cup \Gamma_V$ (up to constant).
Choose a point $\zeta_0=(x_0,\xi^0)=((x',0),(\xi',\xi_n))\in \Gamma_U$, written in boundary coordinates as usual.
Then write $F(\zeta_0)=a\cdot \partial _x+b\cdot \partial_\xi$, with $a=\partial_\xi \phi\big|_{\zeta_0}$, $b=-\partial_x \phi\big|_{\zeta_0}$.
Since for all $|\epsilon|\ll 1$ we have $\pi\circ  \kappa(\zeta_0,\epsilon)\in U$ and $\iota^{*}\circ \kappa(\cdot,\epsilon)=\iota^{*} $, we see that $a=0$ and $b'=-\partial_{x'}\phi\big|_{\zeta_0}=0$.
Since $\zeta_0\in \Gamma_U$ was arbitrary, we see that $\phi\big|_{\Gamma_U}$ is locally constant, hence also constant there and can be taken to be 0.
Taking it to be 0 on $\Gamma_U$ we have $\phi-\xi \phi_{\xi}=0$ on $\Gamma_U$, thus $ \phi=\xi \phi_{\xi}$ on $\Gamma$ by \eqref{eq:homog_taut}, that is, $\phi$ is homogeneous of degree 1 on $\Gamma$.
An argument	 as before shows that $\phi$ is also constant on ${\Gamma_V}$, so by homogeneity it has to vanish there.

Linearizing the Hamiltonians, we find
\begin{equation}\label{eq:linearization}
	\begin{aligned}
(\kappa^{-1} (\,\cdot\, ,\epsilon))^*H(x,\xi)& = H\big((x,\xi)-\epsilon F(x,\xi) \big)+O(\epsilon^2)\\
& = H(x,\xi) - \epsilon \d H\cdot F+O(\epsilon^2) = H(x,\xi) + \epsilon X_H\phi +O(\epsilon^2).
\end{aligned}
\end{equation}
Thus the linearization is $X_H\phi$ indeed, with $\phi$  homogeneous of degree 1 vanishing at $\Gamma_U\cup \Gamma_V$.

We finally mention that if $\kappa (\,\cdot\, ,\epsilon)=(f(\,\cdot\, ,\epsilon),\d (f^{-1})^{*}(\,\cdot\, ,\epsilon)) $ for a family $f(\,\cdot\, ,\epsilon)$ of diffeomorphisms on $M$ fixing $\bo$ pointwise, in which case the linearization of $f(\,\cdot\, ,\epsilon)$ is a vector field $V$ on $M$ which vanishes at $\bo$, one checks that $F=(V,-\d_x(\xi\cdot V))$, so the potential above is given by $\phi=\alpha (V)$, up to constant.
If in addition $H=\frac12 g^{ij}\xi_i\xi_j$ one checks, for instance using local coordinates, that the linear term  in \eqref{eq:linearization} is given by $X_H\phi = \xi(\nabla_{\xi^\sharp} V)=(\d ^{\mathrm s} V^\flat)(\xi^\sharp,\xi^\sharp)$, where the covariant derivative $g$,  the symmetric differential $\d ^{\mathrm s}$, and the musical isomorphisms are with respect to $g$. 
This is the well known expression for the linearization of a 1-parameter family of metrics, transferred to the phase space.

\section{Zero energy level: Scattering rigidity for operators of real principal type} \label{sec_zero}

\subsection{Setup} \label{sec_zero_1}
Let $p_m(x,\xi)\in C^\infty(T^*M\set)$ be real and homogeneous of order $m$ in $\xi$. 
Let $p$ be a symbol in $S^m$ with principal symbol $p_m$. 
Assuming $\partial M =\emptyset$, H\"ormander's propagation of singularities theorem says that $\WF(u)\setminus\WF(f)$ of the solutions of $p(x,D)u=f$ is contained in $p_m^{-1}(0)$ and invariant under the Hamiltonian flow of $p_m$ (at energy level $p_m=0$). There is no time-space separation of the variables here. The parameterization of the zero bicharacteristics is  irrelevant; they are considered as point sets in the theorem. In fact, multiplying $p_m$ by an elliptic symbol (on $p_m=0$) changes the parameterization only, as we show below in Lemma~\ref{lemma_mu} (and which is known). 
This makes sense, because it amounts to applying an elliptic \PDO\ to our equation. This motivates our interest in studying the scattering relation for such homogeneous Hamiltonians $H\in C^\infty(T^*M\set)$. We assume $m=2$ (which can be converted to any other order by a multiplication by an elliptic factor),  and that $\d H\not=0$ on $H^{-1}(\{0\})$. Moreover, we assume that $H^{-1}(\{0\})\neq\emptyset$ (the case $H^{-1}(\{0\})=\emptyset$ corresponds to elliptic operators and is not interesting from our point of view).  

The inverse problem for such operators was studied in \cite{OSSU-principal}. It was shown there that the Cauchy data on the boundary determines uniquely the scattering relation $S_0$ (defined below) and the zero bicharacteristic transform $L$ (see \r{L1}) of the terms of order one or two lower. The inversion of those transforms was left as an open problem. 

\begin{lemma}[{\cite{S-Lorentz_analytic}}] \label{lemma_mu}
	Let $M$ be a manifold with or without boundary. 
	Let $H(x,\xi)$ be a Hamiltonian defined near $(x_0,\xi^0)\in T^*M\set$, and assume $ H(x_0,\xi^0)=0$, $\d H(x_0,\xi^0)\not=0$. Let $\mu(x,\xi)>0$ near $(x_0,\xi^0)$. 
\begin{enumerate}[leftmargin=.8 cm,label=(\alph*)]
	\item \label{lemmamu:itema}
	Then the Hamiltonian curves of $H$ and $\breve{H}\coloneqq  \mu H$
near $(x_0,\xi^0)$ on the zero energy level coincide as point sets but have possibly different parameterizations. 
More specifically, if $(x(s),\xi(s))$, and $(\breve{x}(\breve{s}), \breve{\xi}(\breve{s}))$ are solutions related to $H$ and $ \breve H$, with initial conditions $(x_0,\xi^0)$ 
	at $s=0$, $\breve{s}=0$, respectively, then 
	\begin{equation}  \label{xs}
	(\breve{x}(\breve{s}(s)), \breve {\xi}(\breve s(s))) = (x(s),\xi(s))
	\end{equation}  
	with $\breve s(s)$ solving
	\begin{equation}  \label{ss}
	\frac{\d \breve s}{\d  s}=\frac{1}{\mu( x( s),\xi(s))}, \quad \breve s(0)= 0.
	\end{equation}

\item \label{lemmamu:itemb} Let $\breve s(s; x_0,\xi^0)$ be such that $\d\breve  s/\d s>0$, and define $\mu$ on the Hamiltonian trajectory through $(x_0,\xi^0)$ by \r{ss}. Then  \r{xs} holds with the trajectory $ (\breve x, \breve\xi)$ determined by $\breve{H} \coloneq \mu H$ for any smooth extension of $\mu$. 
\end{enumerate}
\end{lemma}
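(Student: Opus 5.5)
The whole argument rests on one identity on the zero level set. For $\breve H=\mu H$ we have
\[
X_{\breve H}=\mu X_H+H X_\mu ,
\]
which follows directly from linearity and the Leibniz rule, $\d(\mu H)=\mu\,\d H+H\,\d\mu$. On $H^{-1}(\{0\})$ the second term drops out, so there $X_{\breve H}=\mu X_H$. Both flows preserve $H^{-1}(\{0\})$: $H$ is constant along its own flow, and $\breve H=\mu H$ is constant along its flow with $\mu>0$. Hence, starting from $(x_0,\xi^0)$ with $H(x_0,\xi^0)=0$, both trajectories stay in the zero level set for the times considered. Note also that $\d\breve H=\mu\,\d H\neq0$ there, so $\breve H$ is non-degenerate as well.

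For part \ref{lemmamu:itema}, let $(x(s),\xi(s))$ be the $H$-curve through $(x_0,\xi^0)$. Define $\breve s(s)$ by \r{ss}; since $\mu>0$ it is strictly increasing, with a smooth inverse $s(\breve s)$. Set
\[
c(\breve s)\coloneqq(x(s(\breve s)),\xi(s(\breve s))).
\]
By the chain rule,
\[
\frac{\d c}{\d\breve s}=\frac{\d s}{\d\breve s}\,X_H=\mu\,X_H=X_{\breve H}\big|_{c(\breve s)},
\]
where the last equality uses the identity above together with $H(c(\breve s))=0$. So $c$ solves the Hamiltonian system of $\breve H$ with initial condition $(x_0,\xi^0)$. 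By uniqueness of ODE solutions, $c$ coincides with $(\breve x,\breve\xi)$, which is \r{xs}. It also shows that the two curves coincide as point sets near $(x_0,\xi^0)$, and in particular for small times everything stays in the neighborhood where $\mu$ and $H$ are defined.

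For part \ref{lemmamu:itemb}, take $\breve s(s)$ with $\d\breve s/\d s>0$ along the trajectory. Define $\mu$ on the trajectory by $\mu(x(s),\xi(s))=(\d\breve s/\d s)^{-1}>0$. This is well defined once we know the trajectory does not self-intersect locally, which holds because $X_H\neq0$ (as $\d H\neq0$) and we work in a small neighborhood. Extend $\mu$ smoothly and positively to a neighborhood; this is possible because the trajectory is a locally embedded curve, so one can extend in a tubular neighborhood and shrink to keep positivity. For any such extension, part \ref{lemmamu:itema} applies. Its reparameterization \r{ss} involves only the values of $\mu$ on the curve, which are the prescribed ones, so \r{xs} holds with the given $\breve s$.

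The only step needing care is this last one: checking that only the values of $\mu$ on the trajectory enter. This is immediate from the identity $X_{\breve H}=\mu X_H$ on $H^{-1}(\{0\})$, since $X_\mu$ is multiplied by $H=0$, so the derivatives of $\mu$ transversal to the trajectory never appear.
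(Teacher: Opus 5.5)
Your proof is correct and follows essentially the same route as the paper. The paper defers part (a) to \cite{S-Lorentz_analytic} and records the key identity $X_{\breve H}=\mu X_H$ on $H^{-1}(\{0\})$ in Remark~\ref{rem_zero}; your reparameterization via \r{ss} with ODE uniqueness, and your deduction of (b) from (a) using that only the values of $\mu$ on the trajectory enter, is exactly the intended argument.
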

\begin{proof}Part \ref{lemmamu:itema} is proved in \cite{S-Lorentz_analytic}. The proof of \ref{lemmamu:itemb} follows from that of \ref{lemmamu:itema} there. 
\end{proof}

An example is $p=g^{ij}(x)\xi_i \xi_j/2$, with $g$ Lorentzian. Any positive conformal factor reparameterizes the zero bicharacteristics but keeps them the same as point sets. There is no natural way to define travel times here unless one chooses a time function (in a non-unique way). Note that our assumptions also allow $g$ to be pseudo-Riemannian of any signature $(p,q)$, as long as $pq \neq 0$. 
The scaling Lemma~\ref{lemma_hom}, with $E=0$ now, confirms those arguments.

\begin{remark}\label{rem_zero}

Two possibly different Hamiltonians $\breve H$ and $H$ with the same zero energy hypersurface $\mathcal H$ satisfy $\breve H= \mu H $ 
with some non-vanishing $\mu\in C^{\infty}(T^*M\set)$ which is homogeneous of order 0.
Thus by Lemma~\ref{lemma_mu}, the Hamilton flows of $\breve H$ and  $H$ on $\mathcal H $ agree up to reparameterization, which by \eqref{ss} depends only on the values of $\mu$ on $\mathcal H $.
From another point of view, the Hamiltonian vector field $X_H$ is determined on $\mathcal H = H^{-1}(\{0\})$ once one knows the differential $\d H$ there. The differentials of two different Hamiltonians $H$ and $\breve H$ with zero energy hypersurface $\mathcal H$ are related there by $\d\breve H=\mu \d H $ for some scalar function $\mu\in C^\infty(\mathcal H)$ (since they are both conormal) and on $\mathcal H$, $X_{\breve H}= \mu X_{ H} $. 
 From either point of view, we see that the zero Hamiltonian flow depends on $\mathcal H$ (any defining function of which determines the bicharacteristics as point sets) and on a conformal factor on $\mathcal{H}$ (which fixes their parameterization).
The values of the conformal factor $\mu$ on $\mathcal{H}$ are course the same, regardless of which point of view one takes. 

This is markedly different from the case of a positive, homogeneous Hamiltonian at positive energy level: then a positive energy hypersurface is not conic, and in fact for any $\xi\neq 0$ the radial ray $s\mapsto s\xi$ intersects $\{ H=E\}$ transversely.
This implies that $\{\breve H=E\} = \{H=E\}$ is equivalent to $\breve H=H$, since $H$ (hence also its flow) is determined globally by $\{ H=E\}$ using homogeneity. 
\end{remark}

Assuming $\det H_{\xi\xi}\not=0$, for fixed $x$ and $E>0$, the image of  $\exp_x(t\xi)$, for $\xi$ in an open set with  $|\xi|=\sqrt{2E}$ and for $t$ in an open set about $0$,
 is open again. When $E=0$, the domain is contained a conic set of codimension one, and the image is diffeomorphic to it when $\det H_{\xi\xi}\not=0$, at least before it hits conjugate points. Basic examples are $x=x_0+s\theta$, $|\theta|=1$ when $H=\frac12|\xi|^2/2$, $E=1/2$; and $x=x_0+s(1,\theta)$ when $H=\frac12 g^{ij}\xi_j\xi_j$
 with $g$ Minkowski. In the latter case, the image is the light cone $(x^0)^2= (x^1)^2+\dots+(x^n)^2$. This observation is important when $H(x,\xi)$ has no special structure in the $x$ variable, but it does in the $\xi$ one, like being a polynomial of $\xi$ or an eigenvalue of such a matrix. 

\subsection{Scattering rigidity}\label{sec_zero_S}
Similarly to section~\ref{sec_2}, we start with a zero  bicharacteristic curve $\gamma_0$ in a manifold with boundary $M$ connecting some distinct covectors $(x_0,\xi^0)$ and $(y_0,\eta^0)$ in $T^*M\set\big|_{\partial M}$ and contained in $\{H=0\}$ for a fixed Hamiltonian $H$. Choose two disjoint open subsets $U\ni x_0$ and $V\ni y_0$ of $\partial M$ 
so that \r{A1} holds at each one of them. 
By our assumption, $\ell_\pm(x,\xi)<\infty $ along $\gamma_0$ ($\ell_\pm$ are as in \eqref{eq:ell_pm} but at energy level 0), so by \eqref{A1} the same holds in a neighborhood of $(x_0,\xi^{0})$ in $ H^{-1}(\{0\})$ and $\ell_\pm$ is smooth there.
One can define the scattering relation $\hat{S}_0(x,\xi)=\Phi^{\ell_+(x,\xi)}(x,\xi)$ for incoming covectors, as in \eqref{a2} (with the subscript zero indicating a zero energy level), and it is homogeneous of degree 1. 
To define the scattering relation $S_0$ for covectors in $T^*U,$ we restrict $(x,\xi)$ and $(y,\eta)=\hat{S}_0(x,\xi)$ to $TU$ and $TV$ as before to obtain respectively elements $(x,\xi')\in T^*U$ and $(y,\eta')=S_0(x,\xi')\in T^*V$. 
By the Implicit Function Theorem, one can solve $H(x,\xi)=0$ with respect to $\xi$  for given $(x,\xi')$ in a sufficiently small conic neighborhood $\mathcal{U}\subset T^*U$ of $(x_0,\xi^0{}')$,   and similarly in a conic neighborhood $\mathcal{V}$ of $ (y_0,\eta^0{}')=S_0(x^0,\xi^0{}')$.
The new moment now is that $\mathcal{U}$ and $\mathcal{V}$ are conic, and $S_0$  is homogeneous of order one. 
When $H=\frac12g^{ij}\xi_i\xi_j$ with $g$ Lorentzian and $U$ is timelike, $\mathcal{U}$ can be any conic open subset of the timelike cone $g(\xi',\xi')<0$ on $T^*U$. When $U$ is spacelike, then $\mathcal{U}$ can be any conic open subset of $T^*U$.
There is no natural choice of travel times now. %
Relative to our fixed $(x_0,\xi^0)$ we set
\[
\begin{split}
\Gamma_0\coloneqq  \big\{(y,\eta)|\;& \text{$(y,\eta)$ lies on the zero bicharacteristic } \text{issued from some $(x,\xi)$ with $(x,\xi')\in\mathcal{U}$}\}.
\end{split}
\]
We assume, upon shrinking $\Gamma_0$ if necessary, that for  $(x,\xi)\in \Gamma_0\big|_U=\{(x,\xi)\in \Gamma_0:x\in U \}$, the equation $H(x,\xi)=(x,\xi')$ is uniquely solvable for $(x,\xi)$, and we set $(x,\xi)=\zeta(x,\xi') $.
By shrinking $\Gamma_0$ further if needed, we can assume also that on $\Gamma_0$ we have $\ell_\pm<\infty$ and that $\ell_\pm$ are smooth.
We denote by $\sigma_0$ the restriction of the symplectic form on $T^*M$ to $\Gamma_0$.

Below, $\tilde M$ is a manifold with boundary with the same dimension as $M$ and the same boundary, and with $\tilde H\in C^\infty (T^*\tilde M\set )$ a homogeneous Hamiltonian having the same properties as $H$. All quantities, sets, etc corresponding to $(\tilde M,\tilde H)$  will be denoted with tildes, and on them we will have the same assumptions as for the analogous ones corresponding to $(M,H)$.

\begin{theorem}[Semi-global rigidity at a zero energy level]  \label{thm_0_maybe}\

\begin{enumerate}
	[leftmargin=.8 cm,label=(\alph*)]

\item\label{zero_en:itemb} 
Suppose that there exists a smooth, homogeneous diffeomorphism $\kappa: \Gamma_0\to \tilde \Gamma_0$ satisfying $\kappa^{*}\tilde \sigma_0=\sigma_0$ and \r{kappa_b}.
Then the scattering relations of $ H$ and $\tilde H$ agree on $\mathcal{U}$. 

\item\label{zero_en:itemc} 

Assume that $S_0^{H}= S_0^{\tilde H}$ on $\mathcal{U}$. Then there exists a positive $\mu\in C^\infty(\Gamma_{0})$, homogeneous of degree 0, such that for the Hamiltonian $\breve H\coloneqq \mu H$ one has $\breve \ell_+ \circ \zeta =\tilde \ell_+\circ \tilde \zeta$ on $\mathcal U$.
If $\mu'$ is another function with the same properties as $\mu$, then $\int  (\mu^{-1}-\mu'^{-1})\circ \gamma(t)\d t=0$ for null every bicharacteristic $\gamma$ contained in $\Gamma_0$.
Moreover, there exists a unique homogeneous diffeomorphism $\kappa: \Gamma_0\to \tilde \Gamma_0$ with $\kappa^{*}\tilde \sigma_0=\sigma_0$ satisfying \r{kappa_b} and conjugating the flows of $\breve H $ and $\tilde H$, that is, 
\begin{equation}\label{eq:conjugation_of_flows}
	 \kappa\circ \breve \Phi^t(x,\xi)=\tilde \Phi^t\circ \kappa(x,\xi)\quad \forall (x,\xi)\in \Gamma_0,\quad t\in [-\breve\ell_-(x,\xi),\breve\ell_+(x,\xi)].
\end{equation}
\end{enumerate}
\end{theorem}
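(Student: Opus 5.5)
For part~\ref{zero_en:itemb}, I would show that $\kappa$ maps zero bicharacteristics of $H$ in $\Gamma_0$ onto those of $\tilde H$ in $\tilde\Gamma_0$ as point sets. Since $H^{-1}(\{0\})$ is a coisotropic hypersurface, the kernel of $\sigma_0$ on $T\Gamma_0$ is the line field spanned by $X_H$ (characteristic foliation), and likewise for $\tilde\sigma_0$. A diffeomorphism with $\kappa^*\tilde\sigma_0=\sigma_0$ maps $\ker\sigma_0$ onto $\ker\tilde\sigma_0$, so $\d\kappa X_H=\nu X_{\tilde H}$ for some nonvanishing $\nu$, and hence $\kappa$ maps the leaves (zero bicharacteristics) onto leaves. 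I also need $\nu>0$, or at least that $\kappa$ maps the incoming end to the incoming end; this follows from \r{kappa_b}, since $\kappa(\Gamma_U)=\tilde\Gamma_U$ and $\kappa(\Gamma_V)=\tilde\Gamma_V$. Take $(x,\xi')\in\mathcal U$ and $(x,\xi)=\zeta(x,\xi')$. The leaf through $(x,\xi)$ ends at $\hat S_0(x,\xi)\in\Gamma_V$. Its image is the leaf through $\kappa(x,\xi)\in\tilde\Gamma_U$, which ends at $\kappa(\hat S_0(x,\xi))\in\tilde\Gamma_V$; connectedness of the leaf segment gives this. By \r{kappa_b}, $\iota^*\kappa(x,\xi)=(x,\xi')$, so $\kappa(x,\xi)=\tilde\zeta(x,\xi')$ by local uniqueness. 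It follows that $S_0^{\tilde H}(x,\xi')=\iota^*\kappa\hat S_0(x,\xi)=\iota^*\hat S_0(x,\xi)=S_0^H(x,\xi')$.

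For part~\ref{zero_en:itemc}, I would first construct $\mu$ using Lemma~\ref{lemma_mu}\ref{lemmamu:itemb}. Equality of $S_0$ together with $\iota^*$ and local solvability implies that for $(x,\xi')\in\mathcal U$ the endpoints of the $H$-bicharacteristic from $\zeta(x,\xi')$ and the $\tilde H$-bicharacteristic from $\tilde\zeta(x,\xi')$ project to the same boundary data. I would prescribe a reparametrization $\breve s(s)$ along each $H$-leaf, smooth in the initial point and homogeneous of degree $0$. A concrete choice is the affine map $\breve s=s\,\tilde\ell_+(\tilde\zeta(x,\xi'))/\ell_+(\zeta(x,\xi'))$. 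This gives $\mu=\ell_+\circ\zeta/\tilde\ell_+\circ\tilde\zeta$, which is constant along leaves, positive, smooth and homogeneous of degree $0$ because $\ell_+$ and $\tilde\ell_+$ are both homogeneous of degree $-1$ at zero energy. Then $\breve\ell_+\circ\zeta=\tilde\ell_+\circ\tilde\zeta$. For non-uniqueness, \r{ss} gives $\breve\ell_+=\int_0^{\ell_+}\mu^{-1}\circ\gamma\,\d t$, so two admissible $\mu,\mu'$ produce equal integrals, which is the stated identity.

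Next I would define $\kappa$ as in \eqref{eq:kappa_2}: $\kappa(y,\eta)=\tilde\Phi^{\breve\ell_-(y,\eta)}\circ\tilde\zeta\circ\iota^*\circ\breve\Phi^{-\breve\ell_-(y,\eta)}(y,\eta)$, where $\tilde\zeta$ is now the homogeneous zero-energy solution map. Homogeneity follows from Lemma~\ref{lemma_hom} with $E=0$. Conjugation \eqref{eq:conjugation_of_flows} follows because $\breve\ell_-$ shifts by $t$ along the flow. The properties \r{kappa_b} follow from $\breve\ell_\pm=\tilde\ell_\pm$ along corresponding leaves, exactly as in the proof of Theorem~\ref{thm_H}. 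Uniqueness is forced: any such $\kappa$ is determined on $\Gamma_U$ by \r{kappa_b} (it must equal $\tilde\zeta\circ\iota^*$), and then everywhere by \eqref{eq:conjugation_of_flows}.

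The main obstacle is proving $\kappa^*\tilde\sigma_0=\sigma_0$. I would mimic the $\Psi$ construction of Theorem~\ref{thm_H}, with the energy variable removed. Define $\Psi_0:(x',s,\xi')\mapsto\breve\Phi^s((x',0),\zeta(x',\xi'))$, a diffeomorphism onto $\Gamma_0$, and similarly $\tilde\Psi_0$, so that $\kappa=\tilde\Psi_0\circ\Psi_0^{-1}$. It then suffices to show $\Psi_0^*\sigma_0=\d\xi'\wedge\d x'$ and likewise for $\tilde\Psi_0$. At $s=0$ this is the Jacobian computation \eqref{eq:matrix} with the $E$ column deleted. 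The $\d s$ terms drop out because $X_{\breve H}\lrcorner\sigma=-\d\breve H$ vanishes on $T\Gamma_0$. Flow invariance of $\sigma$ then propagates the identity to all $s$, as in \eqref{eq:symplectic_psi}. One subtlety is that $\zeta$ and $\mu$ enter only through $\breve H=\mu H$ on $\Gamma_0$. The flow $\breve\Phi^s$ restricted to $\Gamma_0$ depends only on $\mu|_{\Gamma_0}$ by Remark~\ref{rem_zero}, so I would extend $\mu$ arbitrarily to a neighborhood to use symplecticity of $\breve\Phi^s$ there.
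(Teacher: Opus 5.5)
Your proposal is correct and follows the paper's proof essentially step by step: the characteristic-foliation argument ($\d\kappa X_H$ parallel to $X_{\tilde H}$, with positivity coming from \eqref{kappa_b}) for part (a), the choice of $\mu=\ell_+\circ\zeta/\tilde\ell_+\circ\tilde\zeta$ extended constantly along the flow, the definition of $\kappa$ by flowing back with $\breve\Phi$, applying $\tilde\zeta\circ\iota^*$ and flowing forward with $\tilde\Phi$, and uniqueness via \eqref{kappa_b} on $\Gamma_U$ together with \eqref{eq:conjugation_of_flows}. The only, minor, difference is in the symplectic step. You pull back $\sigma_0$ directly by the $(2n-1)$-dimensional parametrization with the energy variable dropped, and kill the $\d s$ terms using $X_{\breve H}\lrcorner\sigma=-\d\breve H=0$ on $T\Gamma_0$. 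The paper instead keeps $E$, uses the full symplectic maps $\breve\Psi,\tilde\Psi$ from \eqref{eq:matrix} and \eqref{eq:symplectic_psi}, and restricts $\tilde\Psi\circ\breve\Psi^{-1}$ to $\Gamma_0$. Both routes give $\kappa^*\tilde\sigma_0=\sigma_0$.
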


\begin{remark}
Note that by taking the identity in \ref{zero_en:itemb}, if $\Gamma_0=\tilde \Gamma_0$, the scattering relations of $H$ and $\tilde H$ agree. 
In particular, $H$ and $\mu H$ have the same data for any (homogeneous) $\mu\not=0$ on $H^{-1}(\{0\})$. 
Moreover, part \ref{zero_en:itemb} implies that if a Hamiltonian $H$ is the pullback of $\tilde H$ by a homogeneous canonical transformation satisfying \eqref{kappa_b} up to multiplication by conformal factor, then $H$ and $\tilde H$ have the same scattering relations. 
This is the situation one expects in the metric case.
If two Lorentzian metrics on $M$ are related by $g=\mu \psi^* \tilde g$, where $\psi$ is  a diffeomorphism of $M$ fixing $\bo$ and $\mu$ is a conformal factor,  we automatically obtain a canonical transformation of the whole phase space relating the corresponding Hamiltonians up to conformal factor which only depends on the base variables.

In the inverse problem, one could extend $\kappa$ to a diffeomorphism 
$K$
defined in a neighborhood of $\Gamma_0$ (and one could even make it canonical, for example using the coisotropic embedding theorem, see \cite{CannasDaSilva}).
Then $K^\ast\tilde H = \tilde \mu H$ near $\Gamma_0$ for some extension $\tilde \mu$ of $\mu$, whose existence is guaranteed by the fact that $H$ and $\tilde H$ are defining functions of the same hypersurface.
However, these extensions are not unique, and  our data only allows us to construct a diffeomorphism between the codimension one hypersurfaces $\Gamma_0$ and $\tilde {\Gamma}_0$, without providing us with a preferred way to extend it to a neighborhood.
\end{remark}

\begin{proof}[Proof of Theorem \ref{thm_0_maybe}]
For part \ref{zero_en:itemb}, we will  show that for every $\zeta_0\in \Gamma_0\big|_U$ there exists a smooth, strictly increasing function $\alpha: [0, \ell_+(\zeta_0)]\to [0,\infty)$ such that for $t\in [0, \ell_+(\zeta_0)]$ one has 
\begin{equation}\label{flow_intertwiner}
	\kappa\circ \Phi^t(\zeta_0)=\tilde \Phi^{\alpha(t)}\circ\kappa(\zeta_0). 
\end{equation}
First observe that $\Gamma_0$ is coisotropic with respect to $\sigma $, as a codimension 1 submanifold of a symplectic manifold. 
Hence for each $\zeta\in \Gamma_0$, the 1-dimensional symplectic complement of 
$T_\zeta\Gamma_0$ with respect to $ \sigma$ is contained in $T_\zeta\Gamma_0$, and one sees that it is actually spanned by $X_H\big|_{\zeta}$.
 A similar discussion holds for $\tilde \Gamma_0$. 
Given $\tilde \zeta\in \tilde \Gamma_0$ and $v\in T_{\tilde \zeta}\tilde \Gamma_0$, and using that $\kappa^{*}\tilde \sigma _0=\sigma_0$,
\[\tilde \sigma (\d \kappa X_H,v)=\tilde \sigma_0 (\d \kappa X_H,v)= \sigma_0 ( X_H,\d \kappa^{-1}v)= \sigma ( X_H,\d \kappa^{-1}v)=0,\]
that is, $\d \kappa X_H\big| _{\tilde \zeta}$ is in the symplectic complement of $ T_{\tilde\zeta}\tilde\Gamma_0$ and therefore it must be a scalar multiple of $X_{\tilde H}\big|_{\tilde \zeta}$.
Since $\d \kappa X_H$ and $X_{\tilde H}$ are both smooth and non-vanishing, there exists a non-vanishing function $\mu\in C^\infty(\tilde \Gamma_0)$ such that $\d \kappa X_H = \mu X_{\tilde H}$.
Since by \eqref{kappa_b} $\kappa$ takes incoming covectors to incoming ones, $\mu>0$ everywhere.
Hence the flows of $\d \kappa X_H$ and $X_{\tilde H} $ are agree up to orientation preserving reparametrization, implying the existence of  $\alpha $ satisfying \eqref{flow_intertwiner}.

Using \eqref{kappa_b} and \eqref{flow_intertwiner} we see that $\alpha(\ell_+(x,\xi))=\tilde \ell_+(\kappa(x,\xi))$, which leads us to 
 \begin{equation}\label{incoming_scat}
 	\kappa \circ \hat S_0^{ H}(x,\xi)= \hat S_0^{\tilde H} \circ\kappa(x,\xi), \quad (x,\xi)\in \Gamma_{0}\big|_U
. \end{equation}
To pass to the scattering relations on $\mathcal{U}$, 
we use that $ S^{ H}=\iota^* \circ \hat S_0^{{H}}\circ \zeta$, and similarly for $\tilde H$.
Recalling that $\iota^{*}\circ \kappa (x,\xi)\overset{\eqref{kappa_b}}=(x,\xi')$ and  $\kappa (x,\xi)=\tilde \zeta(x,\xi')$,
we find \begin{align}
	S^{ H}(x,\xi')=\iota^{*} \circ \hat{S}^{ H}_0\circ \zeta(x,\xi')\overset{\eqref{kappa_b}}=
	\iota^{*}\circ\kappa \circ \hat{S}^{H}_0(\zeta(x,\xi'))
	\overset{\eqref{incoming_scat}}=\iota^{*}\circ \hat S^{\tilde{H}}_0\circ \kappa(\zeta(x,\xi'))\\
	=\iota^{*}\circ \hat S^{\tilde{H}}_0\circ\tilde\zeta(\iota^{*}\circ  \kappa(\zeta(x,\xi')))=S_{0}^{\tilde H}(\iota^{*}(\zeta(x,\xi')))=S_{0}^{\tilde H}(x,\xi').
\end{align}
Thus the scattering relations of $\tilde H$ and $ H$ agree on $\mathcal U$.

 \smallskip

For part \ref{zero_en:itemc}, to construct $\mu$, extend by constancy along the flow of $X_H$ the function $\Gamma_0\big|_U\ni (x,\xi)\to \frac{\ell_+(x,\xi)}{\tilde\ell_+(\tilde \zeta\circ \iota^{*}(x,\xi))}\in (0,\infty)$.
In other words, for $(y,\eta)\in \Gamma_0$ we set 
$
	\mu(y,\eta)\coloneqq \frac{\ell_+(\Phi^{-\ell_-(y,\eta)}(y,\eta))}{\tilde\ell_+\big(\tilde \zeta\circ \iota^{*}(\Phi^{-\ell_-(y,\eta)}(y,\eta))\big)}.
$
This is homogeneous of degree $0$ and satisfies   $X_H\mu=0$ by construction.
By Lemma \ref{lemma_mu}, for the flow $\breve \Phi^{\breve s(s)}$ of $\breve{H}\coloneqq\mu H$ we have $\breve \Phi^{\breve s(s)}=\Phi^s$, where by \eqref{ss} we have 
\begin{equation}
	\breve {s}(s;x,\xi)=\frac{\tilde\ell_+\big(\tilde \zeta\circ \iota^{*}(\Phi^{-\ell_-(x,\xi)}(x,\xi))\big)}{\ell_+(\Phi^{-\ell_-(x,\xi)}(x,\xi))}s\quad \text{ for }(x,\xi)\in \Gamma_0. 	
\end{equation}
Substituting $s= \ell_+(x,\xi)$ for some $(x,\xi)\in \Gamma_0\big|_U$ (so that $\ell_-(x,\xi)=0$), we see that $\breve \Phi^{\tilde \ell_+(\tilde \zeta\circ \iota^* (x,\xi))}\in \Gamma_0\big|_V$, 
from which it follows that 
\begin{equation}\label{eq:travel_times}
	\breve \ell_+(x,\xi)=\breve{s}(\ell_+(x,\xi);x,\xi)=\tilde \ell_+(\tilde \zeta\circ \iota^* (x,\xi)).
\end{equation}
This is equivalent to $\breve \ell_+\circ \zeta = \tilde \ell_+\circ \tilde \zeta$ on $\mathcal{U}$.
If $\mu'$ is another function with the same properties as $\mu$, it follows by  \eqref{ss} that it must satisfy 
\begin{equation}
	\int_0^{\ell_+(x,\xi)}\frac{1}{\mu(\Phi^{s}(x,\xi))}\d s=\tilde \ell_+(\tilde \zeta\circ \iota^* (x,\xi))=\int_0^{\ell_+(x,\xi)}\frac{1}{\mu'(\Phi^{s}(x,\xi))}\d s,
\end{equation}
for $(x,\xi)\in \Gamma_0\big|_0$. From this one also sees that if  $X_H\mu=X_H\mu'=0$, they must agree.

Similarly to the proof of Theorem \ref{thm_H} we
 construct  $\kappa:\Gamma_0\to\tilde \Gamma_0$ by taking 
\begin{equation}\label{eq:kappa_def_0}
 	\kappa(y,\eta)=\tilde \Phi^{\breve \ell_-(y,\eta)}\circ \tilde \zeta\circ \iota^*\circ \breve\Phi ^{-\breve \ell_-(y,\eta)}(y,\eta) .
 \end{equation}
The smoothness and homogeneity follow from the remarks in the beginning of section \ref{sec_zero_S}.
The properties \eqref{kappa_b} follow using \eqref{eq:travel_times}, and \eqref{eq:conjugation_of_flows} can be checked using that $\breve \ell_-( \breve \Phi^{t}(y,\eta))=t+\breve \ell_-(y,\eta)$.
Regarding uniqueness, \eqref{eq:conjugation_of_flows} for $t=\breve\ell_-(y,\eta)$ implies that $\kappa$ is uniquely determined by its values on $\Gamma_0\big|_U$. There, \eqref{kappa_b} implies that we must have $\kappa=\tilde \zeta\circ \iota^{*}$.

We now show that $\kappa^{*}\tilde \sigma_0=\sigma_0$.
As usual, we extend the manifold $M$ to a slightly larger manifold $N$ of the same dimension and $\breve H$ to a neighborhood of $T^*M\set$ in $T^*N\set$.
Fix $(y_1,\eta^1)\in \Gamma_0$ and let $(x_1,\xi^1)=\breve\Phi^{-s_0}(y_1,\eta^1)\in \Gamma_0$, where $s_0\coloneqq \breve\ell_-(y_1,\eta^1)$.
Define the map 
\begin{equation}
	 \breve \Psi(x',s,\xi',E )=\breve\Phi^{s}(x',0,\xi',\breve h(x',\xi',E))
\end{equation}
exactly as in \eqref{Psi}, except now $s\in (s_0-\epsilon,s_0+\epsilon)$, where $0<\epsilon\ll 1$ and $\breve h$ is defined implicitly by $\breve H(x',0,\xi',\breve h)=E$ for $E$ near $0$.
Note that $(x',0,\xi',\breve h(x',\xi,0))=\zeta(x',0,\xi')$ and $\breve\Psi(x_1{}',s_0,\xi^1{}',0)=(y_1,\eta^{1})$. By 
\eqref{eq:matrix} and \eqref{eq:symplectic_psi}, $\d\breve\Psi\big|_{s=s_0}^*\tilde\sigma=\sigma$
and the map is locally invertible. Constructing a similar map $\tilde {\Psi}$ corresponding to $\tilde H$, which is symplectic for the same reasons, we have $\kappa (y_1,\eta^1)=\tilde \Psi(x_1{}',s_0,\xi^1{}',0)=\tilde \Psi\circ \breve\Psi^{-1}(y_1,\eta^1)$ and $\d( \tilde\Psi\circ \breve\Psi^{-1})=\d \kappa$ on $T\Gamma_0$. 
Since $ \big(\d (\tilde \Psi\circ \breve\Psi^{-1})\big|_{(y_1,\eta^1)}\big)^{*}\tilde \sigma=\sigma$, $\kappa^{*}\tilde \sigma_0=\sigma_0$.
 \end{proof}

\subsection{Linearization}
There are no natural travel times in this case but one can define defining functions of pairs of points serving as initial and endpoints of null Hamiltonian curves, assuming no conjugacy. The second author showed in \cite{S-Lorentzian-scattering} that in the Lorentzian case, any such function is a generating function of the scattering relation, and its linearization is the light ray transform. We prove an analogous result in this case. 
We assume
\begin{equation}   \label{A2}
\text{$y_0\in V$ is not conjugate to $x_0\in U$ along $\gamma_0$.}
\end{equation}
Then for $(x,y)\in U\times V$ close enough to $(x_0,y_0)$, there is unique (not necessarily null) bicharacteristic close enough to $\gamma_0$. We shrink $U$ and $V$ if needed, to ensure that property. 

\begin{definition}
Assume \r{A2}. 
\begin{enumerate}[leftmargin=.8 cm,label=(\alph*)]
 \item  The set $\Sigma\subset U\times V$ consists of pairs $(x,y)$ so that $x$ and $y$ are connected by a locally unique zero bicharacteristic. 

\item We call $r(x,y)$  a defining function of $\Sigma$ on $U\times V$ if (i) $\Sigma= \{r=0\}$, (ii) $\d r\not=0$ on $\Sigma$, and (iii) $r>0$ if and only if the locally unique Hamiltonian curve connecting $x$ and $y$ is on a positive energy level. 
\end{enumerate}
\end{definition}

Condition (iii) is just a sign convention, consistent with that in \cite{S-Lorentzian-scattering}. 

\begin{theorem}
With the assumptions above, including \r{A2}, 
\begin{enumerate}[leftmargin=.8 cm,label=(\alph*)]
\item\label{Linearization_0_itema} $\Sigma$ is a codimension one  submanifold  of $U\times V$, assuming that $U$ and $V$ are small enough. 

\item\label{Linearization_0_itemb} Let $r(x,y)$ be a defining function of $\Sigma$ on $U\times V$.   Then 
\begin{equation}  \label{Sigma}
C = \{(x,-\lambda \d'_x r, y,\lambda \d_y'r);\; \lambda >0, (x,y)\in\Sigma\}
\end{equation}  
is the graph of $S_0$, locally near $(x_0,\xi^0,y_0,\eta^0)$. 

\item\label{Linearization_0_itemc}  If $H(\,\cdot\,, \,\cdot\,  ;s)$ is a one-parameter family of Hamiltonians satisfying the assumptions above, smoothly depending on $s$ near $s=0$, and $r_s$ is smooth family of   defining functions of the corresponding $\Sigma(s)$, then
\begin{equation}  \label{dr}
\frac{\p}{\p s}\Big|_{s=0} r_s(x,y) = -\rho(x,y) \int_0^1 f\circ\gamma_{[x,y]}(t)\,\d t \quad \text{on $\Sigma=\{r(x,y)=0\}$},
\end{equation}  
where $f=\d H(\,\cdot\,, \,\cdot\,  ;s)/\d s|_{s=0}$,  $[0,1]\ni t\mapsto \gamma_{[x,y]}$ is the locally unique null bicharacteristic of $H|_{s=0}$ connecting $x$ and $y$, and $\rho>0$ is a smooth  function. 
\end{enumerate}
\end{theorem}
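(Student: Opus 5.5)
The plan is to use the non-conjugacy assumption \r{A2} to parameterize all nearby bicharacteristics, not only null ones, by their endpoints. For $(x,y)\in U\times V$ near $(x_0,y_0)$, let $\gamma_{[x,y]}(t)=(q(t),p(t))$, $t\in[0,1]$, be the locally unique Hamiltonian curve with $q(0)=x$, $q(1)=y$ and initial codirection near $\xi^0$. Such a curve exists and is unique because $\exp_x$ is a local diffeomorphism near $\xi^0$, uniformly for $x$ near $x_0$. Define $F(x,y)\coloneqq H(\gamma_{[x,y]}(t))$, which is independent of $t$. By construction $\Sigma=\{F=0\}$, and $F>0$ exactly where the connecting curve lies on a positive energy level.

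For part \ref{Linearization_0_itema} it then suffices to show $\d F\neq0$ on $\Sigma$. Lemma~\ref{lemma_V}, applied with an $s$-independent Hamiltonian and curves varying only through their endpoints, gives
\[
\d F(x,y)(v,w)=\langle p(1),w\rangle-\langle p(0),v\rangle ,
\]
since $E=F$ in that lemma and the integral term vanishes. This is the same computation as in the proof of Corollary~\ref{cor_dT}. Therefore $\d_xF=-p(0)$ and $\d_yF=p(1)$ restricted to $TU$ and $TV$, that is, $\d'_xF=-\xi'$ and $\d'_yF=\eta'$. These are nonzero on $\Sigma$ near $(x_0,y_0)$: at a null covector satisfying \r{A1}, $\xi'\neq0$, because $\xi'=0$ would force $\xi=(0,\xi_n)$ with $H(x,\xi)=0$, and Euler's identity would then give $\xi_nH_{\xi_n}=0$, contradicting \r{A1}. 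So $\Sigma$ is a hypersurface and $F$ is a defining function. Any other defining function satisfies $r=\rho F$ with $\rho>0$ smooth, by the standard division lemma applied to two functions with the same zero set, nonvanishing differentials and the same sign convention (iii).

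For part \ref{Linearization_0_itemb}, on $\Sigma$ we have $\d r=\rho\,\d F$. Hence $(x,-\lambda\d'_xr,y,\lambda\d'_yr)=(x,\lambda\rho\xi',y,\lambda\rho\eta')$ with $(y,\eta)=\hat S_0(x,\xi)$ along the null curve. Letting $\lambda$ range over $(0,\infty)$ and using the homogeneity of $S_0$ gives exactly the conic graph of $S_0$ near the reference point. One must also check that every point of the graph arises in this way, i.e. that the map $(x,y,\lambda)\mapsto$ graph point is onto near the base point. This follows from the local bijection between $\Sigma$ and null curves issued from $\mathcal{U}$ modulo dilation.

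Part \ref{Linearization_0_itemc} is where the main work lies. Define $F_s$ as above for $H(\cdot,\cdot;s)$. Combining \r{lt2} with \r{lt1} and \r{T21}, exactly as in the proof of Corollary~\ref{cor_T}, but now with the endpoints fixed and no normalization to unit speed, gives
\[
\p_sF_s(x,y)|_{s=0}=-\int_0^1 f(\gamma_{[x,y]}(t))\,\d t .
\]
Here the boundary term in \r{lt2} vanishes because $x_s(0)=x_s(1)=0$. For a general smooth family $r_s=\rho_sF_s$, which can be arranged with $\rho_s$ depending smoothly on $s$ via the division lemma with parameters, we get $\p_sr_s=\rho\,\p_sF_s+(\p_s\rho_s)F$. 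On $\Sigma$ the last term vanishes, yielding \r{dr}.

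The main obstacle is the smooth dependence on $s$ of the factorization $r_s=\rho_sF_s$. I would handle it by writing $\rho_s=r_s/F_s$ and invoking Hadamard's lemma in a coordinate transverse to $\Sigma(s)$ (for instance $F_s$ itself), which is smooth jointly in $s$ because $\d F_s\neq0$. A second point to check is that non-conjugacy persists for small $s$, which holds by stability of the local diffeomorphism property of $\exp_x$.
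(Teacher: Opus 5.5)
Your proposal is correct and follows the paper's proof. Like the paper, you take the energy $E(\gamma_{[x,y]})$ (your $F$) as the model defining function, read off $\d'_x F=-\xi'$ and $\d'_y F=\eta'$ from Lemma~\ref{lemma_V}, and rule out $\d F=0$ on $\Sigma$ via Euler's identity and \r{A1}. You then handle a general $r$ and part (c) by writing $r=\rho F$ with $\rho>0$, again as the paper does.

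The one difference is in (a). The paper argues geometrically: the $\exp_x$-image of the null cone is a hypersurface, transversal to $V$ by \r{A1}. You instead obtain $\Sigma$ directly as a regular level set of $F$, which is slightly more economical. You also justify the smooth $s$-dependence of the factor $\rho_s$ via Hadamard's lemma with parameters, which the paper leaves implicit.
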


\begin{proof}
We follow the proof of Theorem~2.1 in \cite{S-Lorentzian-scattering}. By the non-conjugacy assumption \r{A2}, the image of $\mathcal{H}=H^{-1}(\{0\})$  under the exponential map corresponding to each fixed $x$ is a codimension one hypersurface of $M$ near $y_0$. 
The image of the radial ray $\xi\mapsto t\xi$ originating from $x_0$, with $t$ close to $1$, is transversal to $V$ by \r{A1} near $y_0$. This shows that the intersection of that hypersurface and $V$ is a codimension one submanifold of $V$. We can include the initial point $x\in U$ as a parameter in those arguments as well. This proves \ref{Linearization_0_itema}. 

Consider \ref{Linearization_0_itemb}. It is enough to prove it for one choice of $r$, see also the argument at the end of the proof of the theorem. 
We chose a defining function $r$  using the notion of energy defined in Lemma~\ref{lemma_V}. Let 
\begin{equation}  \label{T21a}
E(\gamma) = \int_0^1H(\gamma(t) )\,\d t
\end{equation}  
be the energy of the Hamiltonian  curve $[0,1]\ni t\mapsto \gamma(t)= (x(t),\xi(t)) \in T^*M\set$.  The integrand, independent of $t$, vanishes on null bicharacteristics only. With the parameterization $\gamma_{[x,y]} (t )$ by initial and end-points as in (c), we get $E( \gamma_{[x,y]} )=0$ on $U\times V$ if and only if $(x,y)\in \Sigma$. We set 
\[
r(x,y) = E( \gamma_{[x,y]} ).
\]
We claim that $r$ is a defining function of $\Sigma$. We prove below that its differential does not vanish there. 

The variational Lemma~\ref{lemma_V} remains valid in the zero energy case. 
We get, arguing as in \r{T2},
 \begin{equation}  \label{T2a}
\p_s\big|_{s=0}r=  \p_s E(\gamma(\,\cdot\,;s);s)\big|_{s=0} =  \langle \eta^0,w\rangle- \langle \xi^0,v\rangle
 \end{equation}  
with $\xi^0$, $\eta^0$, $v$ and $w$ as in that proof. Since this holds for every $v\in T_{x_0}U$, $w\in T_{y_0}V$,  we get $r_x=-\xi^0$, $r_y = \eta^0$ at $s=0$. Those covectors are normalized by the requirement $t\in [0,1]$. Removing that normalization places the factor $\lambda>0$ in \r{Sigma}.  

In order to prove that $\d r\not=0$ on $\Sigma$; assume that it does not hold. Then $\xi^0{}'=0$ at $x_0$, and also $\eta^0{}'=0$ at $y_0$. This shows that $\xi^0\not=0$  is conormal to $U$ at $x_0$, similarly, $\eta^0\not=0$  is conormal to $V$ at $y_0$. We can  replace $\d \rho$ by $\xi^{0}$  in \r{A1}, and respectively by $\eta^0$, but then we get a contradiction with Euler's identity implying $\xi\cdot H_\xi=0$. Note that we showed a bit more than $(\xi^0{}',\eta^0{}')\not=0$; we showed that each component does not vanish.  This is consistent with the expectation that $C$ would locally map $T^*U\set$ to $T^*V\set$. This proves part \ref{Linearization_0_itemb}. 

Consider \ref{Linearization_0_itemc} now. With 
\[
r(x,y;s) \coloneq E(\gamma(\,\cdot\,;s);s) \coloneq \int_0^1H(x(t;s),\xi(t;s);s)\,\d t, 
\]
   write
\begin{align}
\frac{\d}{\d s}\Big|_{s=0} r(x,y;s) &= \int_0^1H_s(x(t;0),\xi(t;0);0)\,\d t + \frac{\d}{\d s}\Big|_{s=0} E(\gamma(\,\cdot\,;s))\\
&= - \int_0^1H_s(x(t;0),\xi(t;0);0)\,\d t,
\end{align}
yielding \r{dr} with $\rho=1$ there. Finally, any other defining function of $\Sigma(s)$ is of the kind $\alpha(x,y;s) r(x,y;s)$, $\alpha>0$ on $\Sigma(s)$. Differentiating with respect to $s$ at $s=0$, this brings the additional non-vanishing factor $\rho(x,y) = \alpha(x,y,0)$ above, as claimed. 
\end{proof}

\subsection{The Hamiltonian light ray transform}

The previous section leads us naturally to the ``Hamiltonian light ray transform''
\begin{equation}   \label{L1}
Lf(\gamma) =  \int f\circ\gamma(t)\,\d t = \int f(x(t),\xi(t))\, \d t, \quad f\in C^\infty(T^*M\set)
\end{equation}
over null rays ($H=0$). We assume that $f$ is homogeneous of  degree $m$ with some $m$ because of the freedom we have with rescaling, and in view of Lemma~\ref{lemma_hom}. Recall that $\Gamma_0\subset H^{-1}(0)$ is a conic codimension one set. Then $f$ has to be defined there as well, and it is natural for it to be homogeneous. 

 Note that \eqref{dr} determines $Lf$ up to a smooth elliptic multiplier only. That non-uniqueness is caused by the freedom to multiply $H$ by such a factor without changing $S_0^H$. On the other hand, injectivity of $L$ and a possible microlocal invertibility are unaffected by such a factor.

\begin{theorem}
$Lf=0$  for every null bicharacteristic contained in $\Gamma_0$ if and only if $f=X_H\phi$ on $\Gamma_0$ with some $\phi(x,\xi)$ defined there,  homogeneous of order $m-1$ and vanishing for $x\in\bo$.  This is equivalent to $f= X_H\phi+\mu H$ near $\Gamma_0$ with $\phi$ any smooth homogeneous extension of $\phi$ near $\Gamma_0$, and some $\mu(x,\xi)$ homogeneous of order $m-2$. 
\end{theorem}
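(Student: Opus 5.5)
The argument mirrors Theorem~\ref{thm_X1}, with one extra point: homogeneity. In addition, one must explain why $f$ matters only on $\Gamma_0$.

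For the ``if'' direction, suppose $f=X_H\phi$ on $\Gamma_0$ with $\phi=0$ over $\bo$. Since $X_H$ is tangent to $\Gamma_0\subset H^{-1}(0)$, along any null bicharacteristic $\gamma$ in $\Gamma_0$ we have $f\circ\gamma(t)=\frac{\d}{\d t}\phi(\gamma(t))$. Hence $Lf(\gamma)=\phi(\gamma(\ell_+))-\phi(\gamma(-\ell_-))=0$, because both endpoints lie over $\bo$.

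For the ``only if'' direction, define, for $(y,\eta)\in\Gamma_0$,
\[
\phi(y,\eta)=\int_{-\ell_-(y,\eta)}^0 f(\Phi^s(y,\eta))\,\d s .
\]
This is smooth because $\ell_\pm$ are smooth on $\Gamma_0$ by the transversality \r{A1} built into the setup. We have $\phi=0$ on $\Gamma_0|_U$ trivially, and $\phi=0$ on $\Gamma_0|_V$ because $Lf=0$. Differentiating along the flow, and using $\ell_-(\Phi^t(y,\eta))=\ell_-(y,\eta)+t$, gives $X_H\phi=f$.

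Homogeneity follows from Lemma~\ref{lemma_hom} with $E=0$. Indeed, $\Phi^s\circ\mathcal M_\lambda=\mathcal M_\lambda\circ\Phi^{\lambda s}$, and $\ell_-$ is homogeneous of order $-1$. Substituting $s'=\lambda s$ in the integral, we get $\phi(y,\lambda\eta)=\lambda^{-1}\lambda^m\phi(y,\eta)$, so $\phi$ is homogeneous of order $m-1$. Here we use that $\Gamma_0$ is conic, which makes the rescaled curves again null bicharacteristics in $\Gamma_0$.

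For the equivalence with the extended form, first take any smooth homogeneous extension $\phi$ of order $m-1$ to a conic neighborhood of $\Gamma_0$. Then $f-X_H\phi$ is homogeneous of order $m$ and vanishes on $\Gamma_0$. This is a piece of the hypersurface $H^{-1}(0)$, on which $\d H\ne0$. By Hadamard's lemma, applied in local coordinates where $H$ is one coordinate and done conically, for instance on the unit cosphere bundle with homogeneous extension afterwards, we can write $f-X_H\phi=\mu H$ with $\mu$ smooth. Homogeneity of $\mu$ of order $m-2$ follows by dividing, since $H$ is homogeneous of order $2$. Different extensions of $\phi$ change $X_H\phi$ only by a term vanishing on $\Gamma_0$, since $X_H$ is tangent to $\Gamma_0$; this change is absorbed into $\mu H$. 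Conversely, if $f=X_H\phi+\mu H$, restricting to $\Gamma_0$ kills $\mu H$ and returns the first form.

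The main technical point is the Hadamard division step. We need $\mu$ smooth and homogeneous near a conic set that is only a piece of $H^{-1}(0)$, so the neighborhood must be shrunk conically. Working on a transversal section $\{(2\sum\xi_i^2)^{1/2}=1\}$ and extending by homogeneity handles this.
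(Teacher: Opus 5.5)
Your proposal is correct and follows essentially the same route as the paper. The ``only if'' direction uses the primitive $\phi(y,\eta)=\int_{-\ell_-(y,\eta)}^0 f(\Phi^s(y,\eta))\,\d s$ exactly as in Theorem~\ref{thm_X1}, with homogeneity coming from the scaling in Lemma~\ref{lemma_hom}. The second part divides $f-X_H\phi$ by the defining function $H$ and reads off the homogeneity of $\mu$ away from $H^{-1}(0)$. You supply more detail than the paper's short sketch, namely the explicit scaling computation and the conic Hadamard division.
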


\begin{proof}
The proof is as that of Theorem~\ref{thm_X1}. The homogeneity of $\phi$ follows by construction. 
For the second part, take any extension of $\phi$ as above. Then $f-X_H\phi =0 $ on $\Gamma_0$, thus  $f= X_H\phi+\mu H$ with some $\mu$ since $H$ is a defining function of $H^{-1}(\{0\})$. Then away from $\Gamma_0$, $\mu$ is uniquely determined from that equation, and in particular, it must be  homogeneous of order $m-2$. By continuity, this would be true on $\Gamma_0$ as well. 
Note that another extension of $\phi$ would produce another  $\mu$ having the same value on $\Gamma_0$. 
\end{proof}

The extensions of $\phi$ and $\mu$ do not need to be homogeneous outside $H^{-1}(0)$ but we keep them this way because we assume $f$ homogeneous as well. The equivalency statement above has to be considered in the homogeneous class.

\section{Rigidity of Finsler manifolds} \label{sec_Finsler}

In this section we apply the results of Section~\ref{sec_2} to prove a rigidity result for Finsler metrics. We assume that $M$  is a manifold with boundary equipped with a Finsler structure, that is, a smooth function $F:TM\set\to (0,\infty)$  whose restriction to the tangent space $T_xM$ is a Minkowski norm for each $x\in M$. This means by definition that $F(x,\cdot)$ is positively homogeneous of order one, and its Hessian\footnote{{The subscript $v$ here and throughout the section does not stand for differentiation.}} $g_v(x) \coloneq \frac12\partial_v^2 F^2(x,v)$ is a positive definite quadratic form for each $(x,v)\in TM\set$. We extend $F$ by continuity to the zero section for convenience. 
Note that the Finsler structure need not be reversible, i.e. invariant under the map $(x,v)\mapsto (x,-v)$, and we do not assume this.
A standard reference for Finsler geometry is \cite{BaoChernShen}.
A Riemannian metric $g$ induces a reversible Finsler structure upon setting  $F=\sqrt{g(v,v)}$. 
Inverse problems in Finsler geometry have attracted interest in the recent years, partly due to its relevance in the study of the propagation of elastic waves in anisotropic elasticity (see e.g.,~\cite{HoopIL2023, HoopIL2021, MonkonenRandersMetrics}).

The dual Finsler function $F^*(x,\xi)$ on $T^*M$ is defined on each $T_x^*M$, $x\in M$, as the maximum of $\xi(v)$ for $F(x,v)=1$. 
It turns out that $F^*(x,\cdot)$ is a Minkowski norm on each $T^*_xM$, called a co-Finsler norm, see \cite[Sec. 14.8]{BaoChernShen},
in particular $\frac12 (F^*(x,\cdot ))^2$ has a positive definite Hessian away from the zero section.
The Legendre transform $\mathcal{L}_F:TM\set\to T^*M\set$ with respect to $\frac12F^{2}$ is defined pointwise for each $x$, mapping $T_xM\setminus\{0\}$ to $T_x^*M\setminus\{0\}$ by $\L_F(x,v)=\partial_v(\frac{1}2F^2)=F\partial_v F=  g_v(x)(v,\cdot)$, where we used Euler's identity. It is a diffeomorphism, in general not linear in the fibers, and satisfies
\begin{equation}\label{eq:Legendre}
	F=F^*\circ \mathcal{L}_{F}.
\end{equation}  
Moreover, $(\L_{F})^{-1}=\L_{F^*}$.
The Finsler geodesic flow $\Phi_{L}^{t}$ on $TM$ is the flow determined by the Euler–Lagrange equations for $L=\frac12 F^2$. The co-Finsler flow on $T^*M$ is the Hamiltonian flow of $H\coloneq \frac12 (F^*)^2$. The corresponding flows are conjugated by $\mathcal{L}_F$:
\begin{equation}\label{eq:conjugation}
	\Phi^t_H\circ \mathcal{L}_F = \mathcal{L}_F\circ \Phi^t_L.
\end{equation}
(This is a general fact about Lagrangians with positive definite Hessians, see e.g. \cite[Theorem 20.10]{CannasDaSilva}.)
The Hamiltonians we obtain from Finsler metrics are homogeneous of order two, and have convex Hessians. We will call them convex (homogeneous of order two) Hamiltonians. Conversely, given any such Hamiltonian, we can apply the Legendre transform related to it to get a Finsler metric on $M$. This gives us an one-to-one correspondence between Finsler metrics and strictly convex, homogeneous of order two Hamiltonians, which we exploit. 

In contrast to Section~\ref{sec_2}, we work in the tangent bundle.
We denote by $SM=\{(z,v)\in M:F(v)=1\}$ the unit sphere bundle with respect to $F$, which is a smooth manifold with boundary $\partial SM$.
Let $U$ be an open subset of $\bo$ (we can always think of $M$ being included in a larger smooth Finsler manifold, if needed), and let $\mathsf U$
be an open subset of $SM$ over $U$ consisting of inward pointing unit vectors\footnote{This means that for any boundary defining function $\rho$ 
for $M$ we have $ \d \rho(v)>0$ for all $(x,v)\in \mathsf{U}$.}.
 (As before, we only treat the inward pointing case, since the outward poining one is similar.) 
Note that unlike here, in Section~\ref{sec_2}, $\mathcal{U}$ was a subset of $T^*U$.
We define $\mathcal{G}\subset TM$ similarly to $\Gamma$ in \eqref{eq:Gamma} (so $\mathcal{G}$ is conic) and we assume that all geodesics emanating from initial velocities in $\mathsf{U}$ intersect $\partial M$ for the first time transversely, in an open set $V\subset \partial M$. 
By \eqref{eq:Legendre}, unit codirections in the sense of section~\ref{sec_2} correspond to unit directions in the Finsler sense. 
Here we will make sense of the scattering relation as a map 
\begin{equation}
	\hat {\mathsf{S}}^{F}:\mathsf{U}\to SM\big|_V,
\end{equation} similarly to \eqref{a2}, although here we only need it on unit length vectors. 
This is also the approach in other works studying versions of the scattering relation on Finsler manifolds, see \cite{HoopIL2021}.   The travel time $\ell: \mathsf{U}\to \R$ is defined similarly to the Hamiltonian case.

In Section \ref{sec_2}, the scattering relation was defined between subsets of the cotangent to the boundary, and we would like to use the results obtained there.
We write $\partial_\pm SM = \{(z,v)\in SM : \pm \d \rho (v)>0\}$ for the incoming/outgoing portion of the boundary, where $\rho$ is any boundary defining function.
We have a natural map 
\begin{equation}\label{eq:R_map}
	\mathcal{R}_{\pm }:\partial_\pm SM\to T^*\partial M,\quad v\mapsto \iota^{*}\circ \L_F(v).
\end{equation}
In coordinates near a boundary point as usual, it satisfies $\xi_\alpha = \p_{v^\alpha}  (\frac12 F^2(x,v) )= \sum_{1\leq j\leq n}g_{\alpha j}(v)v^j $, $\alpha\le n-1$.
From this expression one can see, for instance, that \eqref{eq:R_map} depends on the values of $F$ only in a neighborhood of $v$ -- in principle, $F$ does not even need to be defined elsewhere. We assume that $F$ is defined globally mainly for notational convenience.

\begin{lemma}\label{lm:R_map}The map $\mathcal{R}_{\pm }$ is a diffeomorphism onto its image.
\end{lemma}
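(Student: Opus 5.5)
We plan to show that $\mathcal{R}_\pm$ is an injective local diffeomorphism between manifolds of the same dimension $2n-2$. The map factors as $\partial_\pm SM \xrightarrow{\L_F} \L_F(\partial_\pm SM) \xrightarrow{\iota^*} T^*\bo$, and $\L_F$ restricted to $SM|_{\bo}$ is a diffeomorphism onto the unit co-sphere bundle $S^*M|_{\bo}=\{H=1/2\}$ over $\bo$, by \eqref{eq:Legendre} and $(\L_F)^{-1}=\L_{F^*}$. So it suffices to study $\iota^*$ restricted to $\L_F(\partial_\pm SM)\subset S^*M|_{\bo}$.

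\textbf{Local diffeomorphism.} In boundary coordinates $(x',x^n)$ with $x^n=\rho$, for $\xi=\L_F(v)$ we have $H_\xi(x,\xi)=v$. This is the standard inverse relation of the Legendre transform: $\partial_\xi \tfrac12 (F^*)^2=\L_{F^*}(\xi)$. Hence $H_{\xi_n}(x,\xi)=\d\rho(v)$, which is nonzero on $\partial_\pm SM$. This is exactly condition \r{A1} at every point of $\L_F(\partial_\pm SM)$. The implicit function argument preceding \eqref{zeta} then shows that $\iota^*:(x',\xi',\xi_n)\mapsto(x',\xi')$, restricted to the hypersurface $\{H=1/2\}$ over $\bo$, has invertible differential there: the tangent space of the level set projects isomorphically because $\partial_{\xi_n}H\neq0$. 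Thus $\mathcal{R}_\pm$ is a local diffeomorphism, and in particular an open map.

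\textbf{Injectivity.} This is the main step, and it uses strict convexity. Fix $x\in\bo$ and suppose $\xi,\tilde\xi\in \L_F(\partial_+S_xM)$ have the same restriction $\xi'$. Then they lie on the vertical line $\{(\xi',t)\}$ in $T^*_xM$, both with $F^*=1$. Since $F^*(x,\cdot)$ is a Minkowski norm, its unit ball is strictly convex, so this line meets the unit sphere in at most two points. At those two points the outward normal $H_\xi$ has $\xi_n$-components of opposite sign (one at the top, one at the bottom), or the line is tangent and the two points coincide. Concretely, $t\mapsto H(x,\xi',t)$ is strictly convex, so its derivative $H_{\xi_n}=\d\rho(v)$ is strictly increasing in $t$. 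At two distinct roots of $H=1/2$, the derivative is therefore negative at the smaller root and positive at the larger one. Consequently at most one of them is incoming, which forces $\xi=\tilde\xi$, and then $v=\tilde v$ by injectivity of $\L_F$. The same argument applies to $\partial_-SM$.

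An injective local diffeomorphism is a diffeomorphism onto its (open) image, which completes the plan. The only delicate point is the one-dimensional convexity argument above; the rest is the implicit function theorem already used for \eqref{zeta}.
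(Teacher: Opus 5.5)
Your proposal is correct and follows essentially the same route as the paper: reduce via $\mathcal{L}_F$ to $\iota^*$ on $\partial_\pm S^*M$, get the local diffeomorphism property from $H_{\xi_n}=\d\rho(v)\neq 0$ by the implicit function theorem, and get injectivity from strict convexity of $t\mapsto H(x,\xi',t)$, which forces $H_{\xi_n}$ to have opposite signs at two distinct roots of $H=1/2$. Your version of the convexity step (monotonicity of $H_{\xi_n}$ plus the mean value theorem) is slightly cleaner than the paper's: it needs neither the existence of a second root nor $f''>0$, and the latter is not defined at $t=0$ when $\xi'=0$.
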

\begin{proof}
Recall that  $F^{*}$ is a Minkowski norm and $\L_{F}:TM\set \to T^*M\set $ is a diffeomorphism.
By \eqref{eq:Legendre}, it induces a diffeomorphism $SM\to S^*M$, where $S^*M$ is the unit cosphere bundle of $H=\frac12(F^{*})^{2}$ as in \eqref{norm}.
We denote the spaces of incoming/outgoing covectors by
\[\partial_\pm S^{*}M=\{(x,\xi)\in \partial S^{*}M: \d \pi X_H\in \partial_\pm SM\}.\]
Since $\d\pi X_H(\xi)=H_\xi(\xi)=\L _{F^*}(\xi)=
(\L_F)^{-1}(\xi)$, the property of being incoming/outgoing is preserved by $\L_F$ and we obtain diffeomorphisms
 $\L_F:\partial _\pm SM\to \partial_\pm S^*M$. 
It suffices to show that 
\begin{equation}\label{eq:rest}
	\iota^*:\partial _\pm S^*M\to  T^*\partial M
\end{equation} is a diffeomorphism onto its image.

Choose coordinates $(x',x_n)$ as usual near a boundary point, so that  $(x,\xi')=\iota^{*}(x,\xi)$ for some $(x,\xi)\in S^*_xM$, $x\in \partial M$, where $\xi=(\xi',\xi_n)$.
To show that \eqref{eq:rest} is injective, it suffices to show that for given $(x,\xi')\in \iota^{*}(\partial_\pm S^{*}M)$ there exists exactly one $\xi_n$  such that $ H(x,\xi',\xi_n)=1/2$ and $ H_\xi(x,\xi',\xi_n)\in \partial _\pm SM$ (note that $\xi_n$ could have any sign).
Denote the unit co-ball bundle by
\[B^{*}M=\{(x,\xi)\in  T^{*}M:H(	x,\xi)\leq 1/2\}.\]
As a consequence of the triangle inequality for Minkowski norms (see \cite[Theorem 1.2.2]{BaoChernShen}), the set $B_x^*M$ is strictly convex for every $x\in M$, in the sense that any line segment in $B_x^*M$ joining two distinct points is contained in its interior.
For fixed $(x,\xi')$, consider the function  $t\overset{f}{\mapsto} H(x,\xi',t)$, which satisfies $f''>0$ and $f\overset{|t|\to \infty}\to \infty $ by convexity and homogeneity of $H$.
The $\xi_n$'s in $\iota^{*}(x,\xi',\xi_n)=(x,\xi')$ correspond exactly to the solutions of $f(t)=1$.
There is at least one  such solution since $ (x,\xi)\in \partial _\pm SM$, and it satisfies $f'(t)=H_{\xi_n}(x,\xi,t)\neq 0$, so there has to be exactly one more where $f'$ has the opposite sign, by the aforementioned properties of $f$.
Therefore, \eqref{eq:rest} is injective.
It is a local diffeomorphism by the condition $H_\xi\neq0$, which allows us to solve the equation $H(x,\xi,\xi_n)$ smoothly for $\xi_n$ in a neighborhood of $(x^0,\xi_0)\in \partial_\pm S^*M$.
\end{proof}

\begin{remark}\label{rmk:alt}
	There is another natural map $\partial _\pm SM\to T^*\partial M$, which can be constructed as follows. As discussed  in \cite[p. 27-28]{ShenLecturesFinsler}, there exists a unique inward/outward pointing unit normal vector field $\nu_\pm$ to $\partial M$, in the sense that $g_{\pm \nu}(\pm \nu,w)=w$ for all $w\in T\partial M$ ($\nu_\pm$  are not necessarily linearly dependent). Hence we obtain well defined decompositions $TM=\R\nu _\pm\oplus T\partial M$, so that one can define projections $\mathcal{P}_{\pm }:\partial TM\to T\partial M$.
	Since $F$ induces a Finsler structure $\hat{F}$ on $\partial M$, we can define a map $\partial _\pm SM\to T^*\partial M$ by $\L_{\hat F}\circ \mathcal{P}_\pm $. If $F$ comes from a Riemannian metric this map agrees with $\mathcal{R}_{\pm }$, though in general this need not be the case.
\end{remark}
	
Note that by Lemma \ref{lm:R_map}, an inward/outward poiting $v\in \p SM$ can be recovered  given the form $\langle \xi',w\rangle \coloneqq  g_v(v,w)$ for all $w\in T_x\bo$.
By \eqref{eq:conjugation}, we have $\hat{\mathsf{S}}^F=\L_F^{-1}\circ \hat{S}^H\circ \L_F$, recall \eqref{a2}.
Moreover, by Definition \ref{def_Ls} we can write on $\mathsf{U}$
\begin{equation}\label{eq:Scat_tang}
	\hat{\mathsf{S}}^F=\mathcal{R}^{-1	}_{-}\circ S\circ\mathcal{R}_+ .
\end{equation}
This is our reason for defining $\mathcal{R}_{\pm }$ as in \eqref{eq:R_map} instead of using the approach outlined in Remark~\ref{rmk:alt}.

\begin{remark}
	In the study of the lens rigidity problem in the Riemannian case one often defines the scattering map $\mathsf{S}:T\partial M\to T\partial M$, by taking projections and ``inverse projections'' of the outgoing and incoming data of $\hat{\mathsf{S}}$. This has the advantage of requiring knowledge of the unknown metric $g$ only on $T\partial M$, as opposed to $\partial TM$. In our case, defining such a projection as described in Remark \ref{rmk:alt} does \emph{not} interact well with the Hamiltonian formulation of the problem. To take the Hamiltonian approach, one could define $\mathsf{S}\coloneqq \L_{\hat{F}}^{-1} \circ \mathcal{R}\circ \hat{\mathsf{S}}\circ \mathcal{R}^{-1}\circ \L_{\hat{F}}$, however this seems somewhat artificial. Of course, in the Riemannian case the two approaches agree.
\end{remark}

Let $\tilde M$ be a second manifold with boundary such that $\partial M=\partial \tilde M$.
Write $H=\frac{1}{2}(F^{*})$ as before and $\Gamma=\L_F(\mathcal{G})$.
We denote by $\mathcal{K}_{\Gamma,H}$ the set of homogeneous canonical transformations $\kappa:\Gamma \to \kappa(\Gamma)\eqqcolon \tilde{\Gamma}\subset T^* \tilde M$ such that $\tilde H\coloneqq H\circ \kappa^{-1}$ is strictly convex on $\tilde{ \Gamma}$ (see Remark \ref{rmk:convex} below).
Since then $\tilde H=\frac12(\tilde F^{*})^{2}$ for a Finsler metric $\tilde F$, for $\kappa\in \mathcal{K}_{\mathcal G,H}$
we have the chain of maps
\begin{equation}\label{eq:Mathieu}
	(\mathcal{G},F) \overset {\mathcal{L}_F}{\longrightarrow}  (\Gamma,H) \overset {\kappa}{\longrightarrow}  (\tilde \Gamma,\tilde H) \overset {\mathcal{L}_{\tilde F^*}}{\longrightarrow}   (\tilde {\mathcal{G}},\tilde  F).
\end{equation}

\begin{lemma}\label{lm:conjugation}
Let $F$ be a Finsler metric on $M$ with $\mathcal{G}$ as before, and suppose $\kappa\in \mathcal{K}_{\mathcal G,H}$ is a canonical transformation satisfying \eqref{kappa_b}.
The composition 
$\phi \coloneqq  {\mathcal{L}}_{\tilde F^{*}}\circ\kappa\circ \mathcal{L}_F\colon \mathcal{G}\to \tilde {\mathcal{G}}$
of the maps in \eqref{eq:Mathieu} (using the notation there) satisfies $\hat {\mathsf{S}}^{\tilde F}\circ \phi=\phi\circ\hat{\mathsf{S}}^F$.
\end{lemma}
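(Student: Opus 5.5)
The plan is to reduce the statement to the conjugation of flows established for $\kappa$ and to the conjugation \eqref{eq:conjugation} of Finsler and co-Finsler flows by the Legendre transforms. First, since $\kappa$ is canonical with $H=\kappa^*\tilde H$, it conjugates the Hamiltonian flows: $\kappa\circ\Phi^t_H=\Phi^t_{\tilde H}\circ\kappa$ on $\Gamma$, wherever both sides are defined. This follows from $X_{\tilde H}=\d\kappa X_H$, the general fact recalled before Theorem~\ref{thm_H}. Since $\tilde H$ is strictly convex and homogeneous of order two on $\tilde\Gamma$, we have $\tilde H=\frac12(\tilde F^*)^2$ with $\tilde F$ a Finsler metric. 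Applying \eqref{eq:conjugation} for $F$ and for $\tilde F$, and using $\L_{\tilde F^*}=\L_{\tilde F}^{-1}$, gives
\[
\phi\circ\Phi^t_{L}=\L_{\tilde F^*}\circ\kappa\circ\Phi^t_H\circ\L_F=\L_{\tilde F^*}\circ\Phi^t_{\tilde H}\circ\kappa\circ\L_F=\Phi^t_{\tilde L}\circ\phi .
\]
So $\phi$ maps Finsler geodesic lifts in $\mathcal G$ to those of $\tilde F$, with the same parameterization. Moreover, $\phi$ preserves unit length because $\tilde F\circ\phi=\tilde F^*\circ\kappa\circ\L_F$, $\tilde H\circ\kappa=H$, and \eqref{eq:Legendre}.

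Second, I check the boundary behavior. By \eqref{kappa_b}, $\kappa(\Gamma_U)=\tilde\Gamma_U$ and $\kappa(\Gamma_V)=\tilde\Gamma_V$, and $\pi\circ\kappa=\pi$ there. Since Legendre transforms act fiberwise, $\phi$ maps vectors of $\mathcal G$ over $U$ (resp. $V$) to vectors of $\tilde{\mathcal G}$ over $U$ (resp. $V$). It also preserves incoming/outgoing: $\d\pi X_H=\L_F^{-1}$ and $\d\pi X_{\tilde H}=\L_{\tilde F^*}$, and $\d\kappa X_H=X_{\tilde H}$ together with $\pi\circ\kappa=\pi$ near the boundary give $\d\pi X_{\tilde H}(\kappa\zeta)=\d\pi X_H(\zeta)$ for $\zeta\in\Gamma_U\cup\Gamma_V$. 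That last point needs a short argument, since $\pi\circ\kappa=\pi$ only holds on the boundary sets. Alternatively, I would simply use that $\kappa$ maps $\Gamma_U$ onto $\tilde\Gamma_U$, whose elements are incoming by construction. In particular, $\phi(\mathsf U)$ consists of inward-pointing unit vectors over $U$.

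Third, I conclude. For $v\in\mathsf U$, the geodesic $\Phi^t_L(v)$ stays in $\mathcal G$ over $M$ for $t\in[0,\ell(v)]$ and reaches $V$ at $t=\ell(v)$ for the first time. Its image $\Phi^t_{\tilde L}(\phi(v))=\phi(\Phi^t_L(v))$ lies in $\tilde{\mathcal G}$, which lies over $\tilde M$, and it hits $V$ at $t=\ell(v)$. The main obstacle is showing that $\ell(v)$ is the first exit time for $\tilde F$: the image curve should not meet $\partial\tilde M$ earlier. I would argue this from the definition of $\tilde\Gamma=\kappa(\Gamma)$ as a union of whole bicharacteristic segments between $\tilde\Gamma_U$ and $\tilde\Gamma_V$. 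Points with $\pi$ in $\partial M$ lie only in $\Gamma_U\cup\Gamma_V$, and the same holds for $\tilde\Gamma$ by the bijectivity in \eqref{kappa_b}. So an earlier boundary hit would be a point of $\tilde\Gamma_U\cup\tilde\Gamma_V$ at an interior time, whose preimage under $\kappa$ would be an interior boundary hit of the original geodesic, contradicting the definition of $\ell$. Hence $\tilde\ell(\phi(v))=\ell(v)$, and
\[
\hat{\mathsf S}^{\tilde F}(\phi(v))=\Phi^{\ell(v)}_{\tilde L}(\phi(v))=\phi(\Phi^{\ell(v)}_L(v))=\phi(\hat{\mathsf S}^F(v)).
\]
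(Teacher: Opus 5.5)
Your argument is correct, but it takes a different route from the paper's. The paper never works with the geodesic flows in this proof. It takes from Section~\ref{sec_2} the stated direct fact that \eqref{kappa_b} implies $S^H=S^{\tilde H}$. It then writes both Finsler scattering maps via \eqref{eq:Scat_tang} as $\hat{\mathsf S}^{F}=\mathcal R_-^{-1}\circ S^{H}\circ\mathcal R_+$ and $\hat{\mathsf S}^{\tilde F}=\tilde{\mathcal R}_-^{-1}\circ S^{\tilde H}\circ\tilde{\mathcal R}_+$. Finally it checks, from $\iota^*\circ\kappa=\iota^*$ and $\zeta_\pm\circ\iota^*=\Id$ on $\partial_\pm S^*M$, that $\tilde{\mathcal R}_\pm\circ\phi=\mathcal R_\pm$ on the boundary, so the conjugacy follows by composing maps. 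You instead prove $\phi\circ\Phi^t_L=\Phi^t_{\tilde L}\circ\phi$ for the full flows and then match entry sets and first exit times by hand.

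What each route buys:
\begin{itemize}
\item The paper's route is shorter and makes explicit how the boundary identifications $\mathcal R_\pm$ interact with $\kappa$, which is what Theorem~\ref{thm_F} reuses.
\item Your route is self-contained. Your flow-conjugation and first-exit argument is exactly what the paper leaves inside ``we have already seen that \eqref{kappa_b} implies $S^H=S^{\tilde H}$''.
\item Your route also gives $\phi^*\tilde F=F$ and $\tilde\ell\circ\phi=\ell$ along the way.
\item It shows the conjugacy needs only $\kappa(\Gamma_U)=\tilde\Gamma_U$ and $\kappa(\Gamma_V)=\tilde\Gamma_V$. The condition $\iota^*\circ\kappa=\iota^*$ is what upgrades conjugacy to equality of the boundary data $S^H=S^{\tilde H}$.
\end{itemize}

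Two local corrections. First, the identity $\d\pi X_{\tilde H}(\kappa\zeta)=\d\pi X_H(\zeta)$ on $\Gamma_U\cup\Gamma_V$ in your second step is false in general, not merely unproved. The condition $\iota^*\circ\kappa=\iota^*$ leaves the normal component $\xi_n$ free. For the lift of a diffeomorphism $\psi$ fixing $\bo$ pointwise, $\d\pi X_{\tilde H}(\kappa\zeta)=\d\psi\,\d\pi X_H(\zeta)$, and $\phi=\d\psi$ need not be the identity on boundary vectors. This is why Theorem~\ref{thm_F}\ref{Finsler:itema} assumes $\kappa=\Id$ on $\Gamma|_{\bo}$ separately. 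Drop that line and keep your alternative. If $\tilde\Gamma$ is simply $\kappa(\Gamma)$ rather than built from incoming covectors, incomingness still follows: $X_{\tilde H}=\d\kappa X_H$ is transversal to $T^*\tilde M|_U$ at $\kappa(\Gamma_U)$, and $\tilde\Phi^t(\kappa\zeta)=\kappa(\Phi^t\zeta)$ stays over $\tilde M$ for $t>0$.

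Second, in the exit-time step, bijectivity of $\kappa$ only rules out an earlier hit of $U\cup V$. That suffices under the standing assumption that incoming $\tilde F$-geodesics first meet $\partial\tilde M$ in $V$. Otherwise, add that $\kappa$ has invertible differential, so it maps points of $\Gamma$ over $M^\circ$ (interior points of $\Gamma$) to points over $\tilde M^\circ$.
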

\begin{proof}
We have already seen that \eqref{kappa_b} implies $S^{H}={S}^{\tilde H}$. We have then 
\begin{align}
	\hat{\mathsf{S}}^{\tilde F}\circ \phi=&\tilde{\mathcal{R}}^{-1}_{-}\circ  S^{\tilde H}\circ \tilde {\mathcal{R}}_{+ }\circ \phi
	=\tilde{\mathcal{R}}_{- }^{-1}\circ  S^{H}\circ \tilde {\mathcal{R}}_{+ }\circ \phi
	 \overset{\eqref{eq:Scat_tang}}=\tilde{\mathcal{R}}^{-1}_{- }\circ\mathcal{R}_- \circ\hat{\mathsf{S}}^F\circ\mathcal{R}^{-1}_+ \circ \tilde {\mathcal{R}}_{+ }\circ \phi.\label{eq:intertw2}
\end{align}
Writing $\zeta_\pm$ for the incoming/outgoing inverse of $\iota^{*}$ as described in the proof of Lemma \ref{lm:R_map}, we have 
\begin{align}
	\mathcal{R}^{-1}_{\pm }&\circ \tilde {\mathcal{R}}_{\pm }\circ \phi=\L_{{F}}^{-1}\circ{\zeta}_\pm \circ \iota^{* }\circ \L_{\tilde{F}}\circ ({\mathcal{L}}_{\tilde F^{*}}\circ\kappa\circ \mathcal{L}_F)
	=\L_{{F}}^{-1}\circ{\zeta}_\pm \circ \iota^{* }\circ\kappa\circ \mathcal{L}_F\\ 
	&\overset{\eqref{kappa_b}}{=}\L_{{F}}^{-1}\circ{\zeta}_\pm \circ \iota^{* }\circ \mathcal{L}_F=\Id,
\end{align}
since ${\zeta}_\pm \circ \iota^{* }=\Id $ on $\partial _\pm S^*M$. Substituting in \eqref{eq:intertw2} we obtain the claim.
\end{proof}

We have the following analog of Theorem~\ref{thm_H}.

\begin{theorem}[Semi-global rigidity of Finsler manifolds] \label{thm_F}  The notations in the theorem are as defined in the present section.
\begin{enumerate}[leftmargin=.8 cm,label=(\alph*)]
	\item \label{Finsler:itema} (The direct problem.) 
Let $(M,F)$  be a Finsler manifold with boundary, with $\mathcal{G}$ as above. Let $\tilde M$ be another manifold with $\bo = \p \tilde M$, and let $\kappa \in \mathcal{K}_{\Gamma,H}$ be a canonical transformation from $\Gamma$ to $\tilde \Gamma$ 
satisfying the boundary condition $\kappa(x,\xi)=(x,\xi)$ on ${\Gamma}\big|_{\bo}$.  
 If  $\tilde H\coloneqq H\circ \kappa^{-1}=\frac12(\tilde F^{*})^{2} $, we have $\hat {\mathsf S}^{F}= \hat{\mathsf S}^{\tilde F }$ on $\mathsf{U}$ and $\tilde \ell=\ell$ for the corresponding travel times.

 \item \label{Finsler:itemb} (The inverse problem.) 
Let $(M,F)$ and $(\tilde M, \tilde F)$ be two Finsler manifolds with a common boundary and assume that $F=\tilde F$ on $TM\big|_{\bo}$. 
Assume  $\hat{\mathsf{S}}^{\tilde F}= \hat{\mathsf{S}}^F$, $\tilde \ell=\ell$ on $\mathcal{U}$. Then there exists a homogeneous canonical transformation $\kappa\in \mathcal{K}_{\Gamma,H}$ with $\kappa=\Id$ on $\Gamma\big|_{\partial M}$ such that $\phi^*\tilde F=F$ on $\mathcal{G}$,   where  $\phi=\L_{\tilde{F}}^{-1}\circ \kappa \circ \L_F$. In particular, $\phi=\Id $ on $\mathcal{G}\big|_{\partial M}$.
\end{enumerate}

\end{theorem}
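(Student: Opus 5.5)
Both parts reduce to the Hamiltonian results of Section~\ref{sec_2} via the Legendre transform, using the conjugation \eqref{eq:conjugation}, the identity \eqref{eq:Scat_tang} and Lemma~\ref{lm:conjugation}.

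For part \ref{Finsler:itema}, note first that the boundary condition $\kappa=\Id$ on $\Gamma\big|_{\bo}$ implies \eqref{kappa_b}. Indeed, $\kappa(\Gamma_U)=\Gamma_U$, and we need $\Gamma_U=\tilde\Gamma_U$; this holds because $\tilde\Gamma$ is defined via $\tilde H=H\circ\kappa^{-1}$, and $\kappa$ maps bicharacteristics of $H$ to bicharacteristics of $\tilde H$. Moreover $\iota^*\circ\kappa=\iota^*$ holds trivially. Since $\kappa$ is the identity on the boundary and $\tilde H=H\circ\kappa^{-1}$, we also get $H=\tilde H$ on $\Gamma\big|_{\bo}$, hence $\L_{\tilde F}=\L_F$ there. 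Thus $\phi=\L_{\tilde F^*}\circ\kappa\circ\L_F$ is the identity on $\mathcal{G}\big|_{\bo}$. Lemma~\ref{lm:conjugation} then gives $\hat{\mathsf S}^{\tilde F}\circ\phi=\phi\circ\hat{\mathsf S}^F$. Evaluating at $v\in\mathsf U$, and using that $\hat{\mathsf S}^F(v)$ lies over $V$ where $\phi=\Id$, gives $\hat{\mathsf S}^{\tilde F}=\hat{\mathsf S}^F$. For the travel times: $\kappa$ conjugates the Hamiltonian flows (the computation preceding Theorem~\ref{thm_H}), and the Legendre transforms conjugate these to the geodesic flows by \eqref{eq:conjugation}. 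Hence $\phi\circ\Phi_L^t=\tilde\Phi_{\tilde L}^t\circ\phi$ with the same time parameter. Since $\phi$ preserves the boundary fibers, the exit times agree, so $\ell=\tilde\ell$.

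For part \ref{Finsler:itemb}, I would transfer the data to the cotangent side. Let $H=\tfrac12(F^*)^2$ and $\tilde H=\tfrac12(\tilde F^*)^2$. The fiberwise Legendre transform at $x\in\bo$ depends only on $F(x,\cdot)$, so $F=\tilde F$ on $TM\big|_{\bo}$ gives $\L_F=\L_{\tilde F}$ and $H=\tilde H$ over $\bo$. Consequently $\mathcal R_\pm=\tilde{\mathcal R}_\pm$, and \eqref{eq:Scat_tang} yields $S^H=S^{\tilde H}$ on $\mathcal R_+(\mathsf U)$. The travel times match because unit vectors correspond to unit covectors. Theorem~\ref{thm_H} then provides a homogeneous canonical $\kappa:\Gamma\to\tilde\Gamma$ with $\kappa^*\tilde H=H$ and \eqref{kappa_b}. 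Since $\tilde H=H\circ\kappa^{-1}$ is strictly convex, being the Hamiltonian of $\tilde F$, we have $\kappa\in\mathcal K_{\Gamma,H}$. Pulling back by Legendre transforms, $\phi^*\tilde F=F$ follows from $\kappa^*\tilde H=H$ together with \eqref{eq:Legendre} on both sides.

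The main obstacle is upgrading \eqref{kappa_b} to $\kappa=\Id$ on $\Gamma\big|_{\bo}$. On $\Gamma_U$, \eqref{kappa_b} gives $\pi\circ\kappa=\pi$ and $\iota^*\circ\kappa=\iota^*$, and $\kappa$ preserves the energy because $H=\tilde H$ at the boundary. A covector over $x$ with given $\xi'$ and energy has at most two lifts, one incoming and one outgoing (proof of Lemma~\ref{lm:R_map}), and $\kappa$ preserves incoming-ness. Hence $\kappa(x,\xi)=(x,\xi)$ on $\Gamma_U$; the same argument on $\Gamma_V$ uses outgoing-ness. Alternatively, in the construction \eqref{eq:kappa_2} we have $\tilde\zeta_\lambda=\zeta_\lambda$ because $H=\tilde H$ at $\bo$, which gives the same conclusion directly. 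Finally, $\phi=\Id$ on $\mathcal G\big|_{\bo}$ follows since $\L_F=\L_{\tilde F}$ there.
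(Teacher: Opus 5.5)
Your proposal is correct and follows essentially the same route as the paper. Part (a) goes through Lemma~\ref{lm:conjugation} once $\phi=\Id$ over $\bo$; part (b) uses $F=\tilde F$ on $TM\big|_{\bo}$ to get $\mathcal R_\pm=\tilde{\mathcal R}_\pm$ and hence $S^H=S^{\tilde H}$ via \eqref{eq:Scat_tang}, then applies Theorem~\ref{thm_H} and upgrades \eqref{kappa_b} to $\kappa=\Id$ on $\Gamma\big|_{\bo}$ using $H=\tilde H$ at the boundary; you additionally supply details the paper leaves implicit (equality of travel times in (a) via conjugation of the geodesic flows, and the at-most-two-lifts/incoming argument for $\kappa=\Id$ at $\bo$).
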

 
\begin{proof}
The proof of \ref{Finsler:itema} follows immediately from Lemma \ref{lm:conjugation} upon noticing that now $\phi=\Id $ on $SM\big|_{\bo}.$
To prove \ref{Finsler:itemb}, note that by \eqref{eq:Scat_tang} and $\hat{\mathsf{S}}^{\tilde F}= \hat{\mathsf{S}}^F$, we have
$	\tilde {\mathcal{R}}_{- }^{-1}\circ{S}^{ \tilde H}\circ \tilde{\mathcal{R}}_{+ }= {\mathcal{R}}_{- }^{-1}\circ {S}^{H}\circ {\mathcal{R}}_+ .$
Since in our case $F=\tilde F$ on $TM\big|_{\partial M}$, $\mathcal{R}=\tilde{\mathcal{R}}$ and therefore $S^{H}={S}^{\tilde H}$. By Theorem \ref{thm_H}, there exists a homogeneous canonical transformation $\kappa $ satisfying \eqref{kappa_b} such that $H=\kappa^{*}\tilde H$ on $\Gamma$. 
The requirement $\iota^{*}\circ \kappa=\iota^{*}$ together with $F=\tilde F$ on $T M\big|_{\partial M}$, which implies
 $H=\tilde H$ on $T^{*} M\big|_{\partial M}$, yield that $\kappa=\Id $ on $\Gamma
 \big|_{\partial M}$.
The requirement $\kappa\in \mathcal{K}_{\Gamma,H}$ is satisfied automatically, since we know that $\tilde H=\frac12(\tilde {F}^{*})^{2}$ is a strictly convex Hamiltonian which is homogeneous of degree 2.
\end{proof}

\begin{remark}\label{rmk:convex}
Theorem~\ref{thm_F} poses the following question: given a strictly convex homogeneous of degree two Hamiltonian $H$ defined on $\Gamma\subset T^*M$,  can we characterize the set $\mathcal{K}_{\Gamma,H}$? Any injective local diffeomorphism on $\pi(\Gamma)$ lifts to an element of $\mathcal{K}_{\mathcal G,H}$ (and in fact any such canonical transformation preserves the convexity of \emph{every} convex Hamiltonian).
These can be perturbed to elements of $\mathcal{K}_{\mathcal G,H}$ that do not arise as lifts of diffeomorphisms.
To see this, pick such a $\kappa_0$ and a point $\zeta_0=(x_0,\xi^{0})\in T^*M\set$ and write $(y_0,\eta^0)=\kappa_0(x_0,\xi_0)$. Then by \cite[Prop. 25.3.3]{Hormander4} there exist coordinates $y$ in a neighborhood of $y_{0}$ and a homogeneous generating function $\phi(x,\eta)$, where $U$ is a neighborhood of $x_0$, with $\partial^{2}_{x\eta}\phi\neq 0$ such that in a neighborhood $U$ of $(x_0,\xi^{0},y_0,\eta^{0})$ we have $\mathrm{graph}{(\kappa_0)}=(x,\d_x\phi,\d _\eta \phi,\eta)$.
Since $\kappa_0$ is the lift of a diffeomorphism, there exists an invertible linear map such that $\eta_{j}=A_{j}^{i}(x)\p_{x^{i}}\phi$, in particular $\phi$ is a degree 1 polynomial in $\eta$.
Slightly perturbing $\phi$ in a compact set  while preserving its homogeneity but breaking the linear dependence on $\eta$, we obtain the graph of a canonical transformation in $\mathcal{K}_{\mathcal G,H}$ that does not originate from a diffeomorphism. See also Remark \ref{rem_can}.
\end{remark}

\appendix 

\section{Motivation: the Hyperbolic DN map} \label{sec_App}
We review some microlocal constructions motivating our definition of the scattering relation $S_\lambda$ and the travel times $\ell_\lambda$ in section~\ref{sec_2}, related to a positive energy level; and $S_0$ related to a zero energy level. We show that those quantities appear naturally in boundary observations. Instead of studying the Dirichlet-to-Neumann (DN)  map, we study the Dirichlet-to-Dirichlet (DD) microlocal map but the former can be obtained with the same construction, see Remark~\ref{rem_DN}. 

\subsection{Hyperbolic \texorpdfstring{\PDO}{PsiDO}\ equations with time-independent coefficients} \label{sec_A1}
Consider the pseudo-differential equation:
\begin{equation}   \label{a1}
(D_t^2-q(x,D))u\in C^\infty,
\end{equation}
where $q(x,\xi)\sim q_2(x,\xi)+ q_1(x,\xi)+\dots$ is a classical symbol of order two, i.e., $q_k$ is assumed to be homogeneous of order $k$ in $\xi$.  Assume $q_2>0$. 
The principal symbol $q_2$ is invariantly defined on $T^*M\set$. To define the subprincipal symbol  invariantly, we need to rewrite $q(x,D)$ as acting on half-densities first, and then the subprincipal symbol would be $q_1+(\i/2)\p_{x^j} \p_{\xi_j} q_2$, see \cite[Theorem~18.1.33]{Hormander3} and \cite{OSSU-principal}. 
The r.h.s.\ in \r{a1} can be assumed to be microlocally smooth only, near the bicharacteristics of interest, but that can be also done a posteriori, with microlocal cutoffs. The same remark applies to the smoothness assumptions below.

The propagation of singularities theorem relates the singularities to the Hamiltonian $p=-\frac12\tau^2+   H(x,\xi)$ in time-space (the factor $-1/2$ there is just for convenience), where $H=\frac12 q_2$. 
The Hamiltonian system reads
\begin{equation}  \label{App1}
\frac{\d}{\d s} t= -\tau, \quad \frac{\d}{\d s} x=   H_\xi, \quad  \frac{\d}{\d s}\tau =0, \quad  \frac{\d}{\d s}\xi = -   H_x
\end{equation}  
with some initial conditions. Therefore, $\tau=\text{const.}$, and we want to stay on zero energy level for $p$, implying $H(x,\xi)=\frac12 \tau^2$. The first equation implies   $t=t_0 -\tau s$. In other words,
\begin{equation}   \label{H_sol}
t=t_0-\tau s, \quad \tau=\text{const.}, \quad (x,\xi) = \Phi^s(x_0,\xi^0)\quad \text{on $H(x,\xi)= \frac12\tau^2$},
\end{equation}
where $\Phi^t$ is the Hamiltonian flow of $H$. 
We see that the bicharacteristic is future/past propagating as $s$ increases, when $\mp\tau>0$. On the other hand, $s$ has no intrinsic meaning in the propagation of singularities theorem, and we can replace it with $t$. Then  for $\eta: = -\xi/\tau$ (unit in our terminology), $\tau<0$,  we get 
\[
 \frac{\d}{\d t} x=   H_\xi(x,\eta), \quad     \frac{\d}{\d t}\eta = -   H_x(x,\eta).
\]
When $\tau>0$, we set $\eta=\xi/\tau$. Then  $(-\tau)^{-1}H_\xi(x,\xi)= - H_\xi(x,\eta)$, $-\tau^{-2}H_x(x,\xi)=  - H_x(x,\eta)$.  In this case, the Hamiltonian system is as above but with $H$ replaced by $-H$. We can set $\tilde t=-t$ now, and get the same system. The bottom line is that for $\tau<0$, the unit codirection in the effective Hamiltonian would be $\eta=-\xi/\tau$, while when $\tau>0$, we have $\eta=\xi/\tau$ propagating back in time. Thus our setup covers both cases. Note that we described how we can treat $t$ as a parameter (time) instead of one of the ``spatial'' variables. If we want to work in spacetime, we should stay with  \r{App1}. 

\begin{example}
Take the simplest case $H=|\xi|^2/2$  on $T^*\R^n$. Then the speed is  $|\dot x|= |H_\xi|=|\xi|$. We are free to rescale $s$ by any constant (or even by functions), which is equivalent to multiplying $p$ by  elliptic factors. This does not change the propagation of the singularities statement but the new zero bicharacteristics are such for a new Hamiltonian: $\mu p= \mu(-\tau^2/2+H)$ instead of what we have, $\mu=1$.   In this constant coefficient case, 
\[
t = t_0 -\tau s, \quad x=x_0 +  s\xi 
\]
with $\xi$, $\tau$ constant, $\tau^2=|\xi|^2\not=0$.  Setting $t_0=0$, we get 
\[
x= x_0- t\xi/\tau.
\]
Set $\eta= -\xi/\tau$. Then $\eta$ is unit, and in fact 
\begin{equation}   \label{a3c}
x=x_0+t\eta^0,\quad \eta=\eta^0
\end{equation} 
is a Hamiltonian curve for $H=|\eta|^2/2$ but now the energy level is  $H=1/2$. Note that the sign of $\tau$ did not matter above but the Hamiltonian $H$ in this case is an even function of $ \xi$.

Assume now that $M=\{(x',x^n)\in \R^n:x^n>0\}$. Given a singularity $(t,x,\tau,\xi')$ of boundary data $f$ on $\R\times\bo$, with the prime standing for a restriction, we get two $\xi$'s with that restriction: $\xi_{\pm} = (\xi',\pm \xi_n)$ with $\xi_n \coloneq (\tau^2-|\xi'|^2)^{1/2}$ assuming $|\xi'|<|\tau|$ (a  timelike singularity). The first one points into $M$, and the second one points outside of it. If $\tau<0$, they both propagate to the future, meaning $t$ increases with $s$, with $(\tau,\xi_\pm)$ propagating in/out $M$. When $\tau>0$, they are both past propagating. On the other hand, when $s$ \textit{decreases}, they propagate to the future. 

If we are just given $H$ in this example, and we want to connect it to \r{a1}, we start with \r{a3c} with $\eta^0$ unit. Restricting it to $T\bo$, we have $\eta^0{}'$ with $|\eta^0{}'|<1$, and there are two covectors $\eta_\pm$ with that restriction. The first one, $\eta_+$ corresponds to a propagation into $M$ for $t>0$. The second one, $\eta_-$ propagates into $M$ for $t<0$. 
\end{example}

Since $H>0$, we can factorize $\frac12\tau^2-H= \frac12( \tau-\sqrt{2H})(  \tau+\sqrt{2H})$. Depending on the sign of $\tau$, one of the factors is elliptic along the zero bicharacteristics of the other. This shows that it is enough to analyze just one of the factors microlocally on $H(x,\xi)= \frac12\tau^2$.

\subsection{Equations of real principal type} \label{sec_A2}
We consider the more general case now, of which section~\ref{sec_A1} is a special case. We return to the partial case in section~\ref{sec_A3}. 
Let $p$ be of real principal type as in section~\ref{sec_zero_1}. We assume the order to be $m\in \mathbb{Z}$. 
The following theorem is essentially known, see, e.g., \cite{St-Yang-DN} for a Lorentzian version, and directly related to the propagation of singularities theorem. 

\begin{theorem} \label{thm_L1} \ 
In the notations of section~\ref{sec_zero_1}, let $f\in \mathcal{E'}(U)$, $\WF(f)\in \mathcal{U}$. Let $u$ solve $p(x,D)u\in C^\infty$ in a neighborhood of $\Gamma_0$, and $u|_U = f$ mod $C^\infty$. Then the map
\[
\Lambda\colon f\mapsto u|_{V}
\]
is an elliptic FIO of order zero associated with the scattering relation $S_0$ of $p_m$ at zero energy level.
\end{theorem}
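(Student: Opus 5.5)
The plan is to build a parametrix for the boundary value problem microlocally near $\Gamma_0$, using geometric optics with the zero bicharacteristics of $p_m$. Since $\d H\neq0$ and \r{A1} holds near $(x_0,\xi^0)$, the characteristic variety $p_m^{-1}(0)$ is transversal to $T^*M|_U$ in the sense that its projection to $T^*U$ is locally a diffeomorphism, with local inverse $\zeta$. First, microlocalize $f$ near $\mathcal U$. Then, in local boundary coordinates $(x',x^n)$, look for $u$ as an oscillatory integral
\[
u(x)=(2\pi)^{-(n-1)}\int e^{\i\varphi(x,\xi')}a(x,\xi')\hat f(\xi')\,\d\xi'.
\]
The phase solves the eikonal equation $p_m(x,\d_x\varphi)=0$ with $\varphi|_{x^n=0}=x'\cdot\xi'$, and we select the branch $\d_x\varphi|_{U}=\zeta(x',\xi')$. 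This is solvable near $U$ by Hamilton–Jacobi theory because of \r{A1}. The amplitude $a\sim\sum a_{-j}$ is obtained from the transport equations along $X_{p_m}$ with $a_0|_U=1$.

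Near $U$ this gives a local FIO whose canonical relation is the flowout Lagrangian
\[
\Lambda_0=\{(\Phi^s(\zeta(x,\xi')), (x,\xi'))\},
\]
which is the graph of the flow from $T^*U$ into $T^*M$. The phase function description breaks down where caustics form in the interior, so instead of a single global phase I will propagate globally using FIO theory. The solution operator $f\mapsto u$ is an FIO associated with $\Lambda_0$, extended along the whole of $\Gamma_0$. This is justified by the propagation of singularities theorem together with Duistermaat–Hörmander's construction of parametrices for real principal type operators: the Lagrangian distribution $u$ has wave front set in $\Gamma_0$, and its principal symbol satisfies a transport ODE along $X_{p_m}$. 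The symbol is nonvanishing because it is initially $1$ on $U$ and is transported by a linear homogeneous first order ODE. Uniqueness modulo $C^\infty$ of $u$ near $\Gamma_0$, given $u|_U$, follows from propagation of singularities in the real principal type setting. Here I use that bicharacteristics through $\Gamma_0$ leave $M$ only through $U$ and $V$, transversally, so any microlocal solution with smooth trace on $U$ is smooth along $\Gamma_0$.

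The last step is the restriction to $V$. By \r{A1} at $(y_0,\eta^0)$, the bicharacteristics hit $V$ transversally. Hence $N^*V$-type transversality holds, and the restriction operator $u\mapsto u|_V$ is an FIO of order $0$ associated with $\iota^*$ (the pullback by the inclusion of $V$), acting cleanly (in fact transversally) on $\Lambda_0$ near the outgoing points. The composition calculus then shows that $\Lambda$ is an FIO with canonical relation $\iota^*\circ\hat S_0\circ\zeta=S_0$, which is the graph of a local canonical transformation. Ellipticity follows from the nonvanishing transported principal symbol; the order is zero because $a_0=1$ at $U$ and both the trace map and the interior propagation preserve order in this normalization.

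The main obstacle I expect is the bookkeeping at the boundary. I need to show that the restriction to $V$ is a transversal composition, and that the half-density/Maslov factors give a nonzero symbol of order exactly zero. I would handle this by choosing a local phase near $V$ adapted to boundary coordinates there, following \cite{St-Yang-DN}.
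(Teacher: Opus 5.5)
Your proposal is correct in outline, but it handles the passage from $U$ to $V$ differently from the paper. The paper uses the same single-phase ansatz near $U$ and then does two things by hand. First, it reads off the canonical relation \eqref{phi_c} from the phase $\phi(x',\xi')-y'\cdot\xi'$, and shows it equals $S_0$ by differentiating the eikonal equation in $\xi'$, which gives that $\phi_{\xi'}$ is constant along the bicharacteristics. Second, it gets around caustics by inserting finitely many small hypersurfaces $U_1,U_2,\dots$ transversal to $\gamma_0$, with \eqref{A1} holding there. Then $\Lambda$ is a composition of short-range FIOs, each with a single global phase, of order zero, elliptic, and associated with a canonical graph (a local scattering relation). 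Transversality of these compositions is automatic, and the composite relation is $S_0$.

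You instead treat the kernel of $f\mapsto u$ as a Lagrangian distribution on the flowout $\Lambda_0$ via the Duistermaat--H\"ormander calculus, and then compose once with the trace on $V$. In that framework caustics play no role and the identification of the canonical relation is built in. If you start from the local phase, as you do, that identification near $U$ is exactly the paper's $\phi_{\xi'}$-constancy computation, which you assert rather than prove. Your route buys a single global object with no intermediate hypersurfaces. The price is the half-density/Maslov transport and one composition with the trace, whose canonical relation is not a graph. Its transversality follows from \eqref{A1} at $V$ together with $\Lambda_0\cap N^*V=\emptyset$. The latter is the Euler-identity argument the paper uses to show $\d r\neq0$ on $\Sigma$: $\eta\cdot H_\eta=0$ is incompatible with $\eta$ conormal to $V$ under \eqref{A1}.

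Two bookkeeping remarks. In H\"ormander's normalization, the operator $f\mapsto u$ has kernel $\int e^{i(\phi(x,\xi')-y'\cdot\xi')}a\,\d\xi'$ on $M\times U$, with $a$ of order zero. It therefore has order $-1/4$, while restriction to the hypersurface $V$ has order $+1/4$. So it is their transversal composition, not each factor, that has order zero; this matches the direct count for the kernel of $\Lambda$ on $V\times U$. Also, recovering $u$ modulo $C^\infty$ from $u|_U$ is not a consequence of interior propagation of singularities alone. It is a microlocal Cauchy problem at $U$: one uses \eqref{A1} to factor $p_m=e\,(\xi_n-\lambda(x,\xi'))$ near $\Gamma_0|_U$ with $e$ elliptic. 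It holds only for $u$ microlocalized to the branch selected by $\zeta$. The paper does not address uniqueness at all and simply defines $\Lambda$ through the parametrix.
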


\begin{proof}[Proof (Sketch)]
In local coordinates in which $U=\{x^n=0\}$, we are looking for a parametrix of the form
\[
u(x) = (2\pi)^{-n+1} \int e^{\i \phi(x,\xi')} a(x,\xi) \hat f(\xi') \, \d\xi',
\]
where $a~\sim a_0+a_1+\dots$ is a classical amplitude of order zero, and $\phi$ solves the eikonal equation
\begin{equation}  \label{H_eik}
H(x,\d_x\phi) = 0, \quad \phi|_{x^n=0} = x'\cdot\xi'.
\end{equation}  
We solve it for $(x,\xi')$ in a conic neighborhood of $(x_0,\xi^0{}')$. The method of characteristics, see \cite{Evans}, says that we determine $\d_x\phi(x,\xi')$ for $x^n=0$ first by solving $H(x,\d_x \phi) = 0$ for $\p_{x^n}\phi$ there. This algebraic equation may have more than one locally smooth solution; we pick one of them. In our case, it is the one giving us $\p_{x^n}\phi=\xi_n$, see \r{zeta}. Then we pass to the $\xi'$ dependent coordinates $(x',s) \mapsto x(s) = \gamma_{(x',0),\xi(x')} (s)$, where $\gamma$ is the projection of the zero bicharacteristic on the base.  In those coordinates, $\phi=x'\cdot\xi'$. Then it happens that $\d_x \phi= \xi(s)$, where the latter is the fiber component of that bicharacteristic. To apply $p(x,D)$ to $u$, we use the asymptotic expansion of a \PDO\ applied to $e^{\i \rho\phi}a$, as $\rho\to\infty$. In this case, $\rho = |\xi'|$, see \cite[Theorem~VI.3.1]{Treves2}. The leading terms yield the eikonal equation \r{H_eik}. The second term, see (3.32) and (3.34)  in \cite{Taylor-book2}, implies the first transport equation:
\[
Ta_0= 0, \quad a_0|_{x^n=0}=1, 
\]
where
\[
T \coloneq \sum _{|\alpha|=1} (\p_\xi^\alpha p_m) (x,\p_x\phi) D_x^\alpha + p_{m-1}(x, \p_x\phi) + \i \sum_{|\alpha|=2} (\p_\xi^\alpha  p_m) (x,\p_x\phi)\frac1{\alpha!} D_x^\alpha\phi.
\]

The transport equation $Ta_0=0$ is an ODE along each zero bicharacteristic, projected to the base, as the leading term indicates. The next two terms are potential-like and they can be handled by an integrating factor technique. In particular, $a_0$ never vanishes. To determine $a_j$ for $j\ge1$, we solve an inhomogeneous transport equation, with the same $T$ but with a right-hand side determined by the previous steps. A similar construction is presented in \cite[Ch.~VIII]{Taylor-book0}, for example, by introducing an auxiliary time variable.  

Assume for a moment that the eikonal equation is solvable all the way to $V$, and a bit behind it. Then $\Lambda$ is a formal (at this point) FIO, microlocally from $\mathcal U$ to $\mathcal V$ with a Lagrangian parameterized by the phase $\psi\coloneqq  \phi(x',\xi') - y'\cdot\xi'$, $x\in V$, $y'\in U$, where $x'$ is $x$ restricted to $V$ in coordinates in which $V=\{x^n=0\}$ (note that here we swap the roles of $x$ and $y$ to make our notation consistent with \cite{Taylor-book0}). The associated canonical relation, see, e.g., \cite[VIII.5]{Taylor-book0}, is 
\begin{equation}\label{phi_c}
(\p_{\xi'}\phi, \xi') \mapsto (x',\p_{x'}\phi).
\end{equation} 
We claim (and this is known, in principle) that the above  reduces to 
\[
(y', \xi') \mapsto (x',\p_{x'}\phi(x',\xi')),
\]
where $y'$ is such that the bicharacteristic from $(y',0)\in U$  with co-direction $\xi$ having projection $\xi'$ hits $(x',0)$; then $\p_{x' }\phi(x',\xi')$ would be the co-direction there. This is exactly $S_0$.

To prove the claim above, let $(x,\eta)=( x(t,y', \xi'), \eta(t,y', \xi'))$ be that zero bicharacteristic. Consider $F(t)\coloneqq  \phi_{\xi'}(x(t,y', \xi'), \xi')$. Then $\dot F^\alpha(t) = \dot x^i (\p_{x^i}\p_{\xi_\alpha}\phi)= H_{\xi_i} (x,\eta) (\p_{x^i}\p_{\xi_\alpha}\phi )(x(t,y', \xi'), \xi') $. The latter vanishes as it follows by differentiating the Hamilton-Jacobi equation \r{H_eik} with respect to $\xi'$. Then $F(t)=F(0)=y'$. We refer to \cite[p.~322]{Treves2} for a similar argument. 

The construction above is local only, and may not extend all the way to $V$. On the other hand, it works in the same way starting from a small hypersurface $U_1$ intersecting $\gamma_0$ transversally, so that \r{A1} holds at that point, which can be achieved by choosing $U_1$ suitably. We can choose a partition of unity along $\gamma_0$ to write $\Lambda$ as a composition of a finite number of FIOs having a canonical relation the composition of the corresponding scattering relations, which is $S_0$, see also \cite{St-Yang-DN}.
 \end{proof}

\begin{remark}\label{rem_DN}
One can study the Dirichlet-to-Neumann (DN) map in Theorem~\ref{thm_L1} instead, as it is often done in the literature. The treatment is very similar, one needs to reflect the solution at $V$ with the Dirichlet boundary condition, and take the trace of the normal derivative at $V$, see, e.g., \cite{St-Yang-DN} for this construction in the wave equation case. The DN map is an elliptic FIO with the same canonical relation, of order one. 
\end{remark}

\begin{remark}\label{rem_can}
Relation \r{phi_c}, without the primes, allows us to construct a large set of canonical relations $\kappa$ satisfying the boundary condition in \r{kappa_b}, through a generating function $\phi(x,\xi)$  of kind III. It is enough to take $\phi(x,\xi)$ to be homogeneous of order one, with $\phi(x,\xi)=x'\cdot\xi'$ for $x=(x',0)\in U$, and $\xi$ in some conic neighborhood of some $\xi^0\not=0$; and similarly on $V$, with respect to the boundary coordinates there. As long as \r{phi_c} is a diffeomorphism from $\Gamma$ to $\tilde \Gamma$ (true, for example, when $\phi$ is small enough in some $C^k$), then we get a canonical transformation $\kappa$ with the desired properties. In particular, one can choose $\phi=x\cdot\xi$ away from some neighborhood of $U\cup V$; then $\kappa(x,\xi)$ would be identity for $x$ close to $U$ or $V$. Note that the $\kappa$'s obtained this way are generated by a diffeomorphism $F$ if and only if $\phi(x,\xi) = F^{-1}(x)\cdot\xi$. 
\end{remark} 	

\subsection{Back to hyperbolic \texorpdfstring{\PDO}{PsiDO}\ equations with time-independent coefficients} \label{sec_A3} 
We consider the special case of \r{a1} now. Then $p=\tau^2-q$ is homogeneous of order two, and dividing by $2$ for convenience, we have $p_2=\tau^2/2- H(x,\xi)$, $H=q_2/2$. The variables denoted by $x\in\R^n$ in section~\ref{sec_A2} are $(t,x)\in\R^{1+n}$ now. The analog to $(x_0,\xi^0)$ is a fixed point $(t_0,x_0,\tau^0,\xi^0) \in (T^*\R\times\mathcal{U})\set$. 
We preserve the notation $U$, $V$, $\mathcal{U}$, $\mathcal{V}$ to be as in section~\ref{sec_2}, related to $x$ only, keeping in mind that in the setup of section~\ref{sec_A2}, they get replaced by $\R\times U$, etc. We assume condition \r{A1} at $(t_0,x_0,\tau^0,\xi^0)$, and at the exit one, $(s_0,y_0,\tau^0,\eta^0)$. In particular, this fixes $\sgn(\tau_0)$. The sign of $s$ in \r{H_sol} depends on whether $\xi^0$ is incoming (then $s>0$) or outgoing (then $s<0$). Then $\Gamma_0$, related to $\tau^2/2-q_2/2$, takes the form
\[
\Gamma_0 = \left\{ (t,x,\tau,\xi)|\; (x,\xi)= \Phi^{-(t-\hat t)/\tau} (\hat x, \hat \xi) ,  \; (\hat x,\hat \xi)\in \mathcal{U}_{|\tau|} , \, \tau\in \sgn(\tau_0)\R,\,  \hat t\in \R  \right\},
\]
where the sign of $-(t-t_0)/\tau$ (which we called $s$ above), determining the sign of $t-\hat t$, depends on $\xi^{0}$ as explained. The range of the interval over which $t$ varies is determined by the requirement that the trajectory is between $U$ and $V$. 
 
\begin{theorem}
Let $f\in \mathcal{E'}(\R\times U)$, $\WF(f)\in (T^*\R\times \mathcal{U})\set $. Let $u$ solve $(D_t^2-q(x,D)) u\in C^\infty$ in  $\Gamma_0$ 
 and $u|_{\R\times U} = f$ mod $C^\infty$. Then the map
\[
\Lambda: f\mapsto u|_{\R\times V}
\]
is an elliptic FIO of order zero associated with the canonical relation 
\[
(t,\tau, (y',\eta'))  \mapsto (t+\sgn(-\tau) \ell(y',\eta'/|\tau| ), \tau ,S_{|\tau|} (y',\eta')), 
\]
where $S_{|\tau|}$ is the scattering relation of $H=\frac12 q$ at  energy level $\tau^2/2$. 
\end{theorem}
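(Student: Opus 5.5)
The plan is to reduce this statement to Theorem~\ref{thm_L1}, applied in spacetime $\R^{1+n}_{t,x}$ to the real principal type operator $D_t^2-q(x,D)$. Its principal symbol, divided by two, is $p_2=\tau^2/2-H(x,\xi)$. The hypotheses of Theorem~\ref{thm_L1} are checked as follows. First, $\d p_2\neq0$ on $p_2=0$: indeed $\partial_\tau p_2=\tau\neq0$ there, since $H>0$ forces $\tau\neq0$. Second, \r{A1} for $p_2$ with respect to $\R\times U$ reads $\partial_{\xi_n}p_2=-H_{\xi_n}\neq0$, which is exactly \r{A1} for $H$. Theorem~\ref{thm_L1} then gives that $\Lambda$ is an elliptic FIO of order zero, associated with the zero-energy scattering relation $S_0^{p_2}$ from $T^*(\R\times U)$ to $T^*(\R\times V)$. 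What remains is to compute $S_0^{p_2}$ explicitly in terms of $S_{|\tau|}$ and $\ell$.

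For the computation I will use the explicit solution \r{H_sol} of the Hamiltonian system \r{App1}. Along a null bicharacteristic, $\tau$ is constant and $(x,\xi)=\Phi^s(x_0,\xi^0)$ with $H(x_0,\xi^0)=\tau^2/2$. The time component satisfies $t=t_0-\tau s$. Start from boundary data $(t,\tau,y',\eta')$ with $(y',\eta')\in\mathcal{U}_{|\tau|}$.
\begin{enumerate}[leftmargin=.8 cm,label=(\roman*)]
\item Lift $\eta'$ via $\zeta_{|\tau|}$ to the incoming covector at energy $\tau^2/2$. The time component of the covector is $\tau$, already known, and the restriction to $T(\R\times U)$ keeps it.
\item Flow for the parameter time $s=\ell_{|\tau|}(y',\eta')$ until the curve hits $V$ transversally; by definition the restricted exit covector is $S_{|\tau|}(y',\eta')$.
\item By Lemma~\ref{lemma_hom}, $\ell_{|\tau|}(y',\eta')=|\tau|^{-1}\ell(y',\eta'/|\tau|)$. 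The time shift is therefore $-\tau s=-\tau|\tau|^{-1}\ell(y',\eta'/|\tau|)=\sgn(-\tau)\,\ell(y',\eta'/|\tau|)$.
\end{enumerate}
Since $\tau$ is conserved, this reproduces the stated relation, and homogeneity of order one in $(\tau,\eta')$ follows from the scaling relations for $S_\lambda$ and $\ell_\lambda$.

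The main obstacle is the bookkeeping of signs and parametrizations, not any analysis. Lemma~\ref{lemma_mu} says that reparametrizing does not change the point set. The spacetime parameter $s$ must be matched with the travel time of $H$ at energy $\tau^2/2$ through $t$ (the discussion after \r{App1}: the sign of $\tau$ decides future or past propagation). I also need to check that the description of $\Gamma_0$ given just before the statement is precisely the $\Gamma_0$ of Theorem~\ref{thm_L1}. Finally, I need to check that outgoing rather than incoming $\xi^0$ only flips the sign of $s$ consistently. Once these are confirmed, the graph identification is immediate. Ellipticity and order zero are inherited directly from Theorem~\ref{thm_L1}: the principal amplitude $a_0$ never vanishes.
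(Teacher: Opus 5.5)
Your proposal is correct and follows essentially the same route as the paper. The paper also applies the real principal type result (Theorem~\ref{thm_L1}) in spacetime to $-\frac12\tau^2+H$. It then uses the decoupling \r{H_sol} to write the canonical relation as $(t,x',\tau,\xi')\mapsto (t-\tau\ell_{|\tau|}(x',\xi'),y',\tau,\eta')$ with $(y',\eta')=S_{|\tau|}(x',\xi')$, and concludes with the scaling $\ell_{|\tau|}(x',\xi')=|\tau|^{-1}\ell(x',\xi'/|\tau|)$.

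Your explicit checks that $\d p_2\neq0$ on the characteristic set and that \r{A1} transfers to $\R\times U$ are details the paper leaves implicit. The sign mismatch between your $p_2=\tau^2/2-H$ and the system \r{App1} is harmless, since only the point sets of the bicharacteristics enter the canonical relation.
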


\begin{proof}
Set $p=-\frac12\tau^2 + H(x,\xi)$. The Hamiltonian flow then decouples as in \r{H_sol}. The canonical relation of $\Lambda$ then can be written as 
\[
(t,x', \tau,\xi')  \mapsto (t-\tau \ell_{|\tau|}(x',\xi'),y',\tau , \eta'), \quad (y',\eta') = S_{|\tau|}(x',\xi'),
\]
where $\ell_{|\tau|}(x',\xi')$ is the value of $s$ at which (the base projection of) $\Phi^s((x',0),\xi)$ reaches $V$, with $\xi$ solving $H((x',0),\xi)= \tau^2/2$. We have $\ell_{|\tau|}(x',\xi') = |\tau|^{-1} \ell(x',\xi'/|\tau|) $, which completes the proof. 
\end{proof}

\bibliographystyle{abbrv}
\bibliography{references.bib}

\end{document}